\newtheorem{theorem}{Theorem}[section]
\newtheorem{lemma}[theorem]{Lemma}
\newtheorem{proposition}[theorem]{Proposition}
\newtheorem{definition}[theorem]{Definition}
\newenvironment{proof}
{\par\addvspace{0.3cm}\noindent{\rm Proof. }}
{\nopagebreak\mbox{}\hfill $\Box$\par\addvspace{0.25cm}}
\newcommand{\qed}{\hfill $\Box$}
\newcommand{\be}{\begin{equation}}
\newcommand{\ee}{\end{equation}}
\newcommand{\bq}{\begin{eqnarray}}
\newcommand{\eq}{\end{eqnarray}}
\newcommand{\nn}{\nonumber}
\newcommand{\ba}{\begin{array}}
\newcommand{\ea}{\end{array}}
\newcommand{\R}{{\mathbb R}}
\newcommand{\C}{{\mathbb C}}
\newcommand{\Z}{{\mathbb Z}}
\newcommand{\T}{{\mathbb T}}
\newcommand{\N}{{\mathbb N}}
\newcommand{\cL}{\mathcal{L}}
\newcommand{\cC}{\mathcal{C}}
\newcommand{\cI}{\mathcal{I}}
\newcommand{\K}{\mathrm{K}}  
\newcommand{\diag}{\mathrm{diag\,}}
\newcommand{\trace}{\mathrm{trace\,}}
\newcommand{\E}{\mathbb{E}}
\newcommand{\PP}{\mathbb{P}}
\newcommand{\iv}{^{-1}}
\newcommand{\iy}{\infty}
\renewcommand{\kappa}{\varkappa}
\newcommand{\eps}{{\varepsilon}}
\newcommand{\De}{\Delta}
\renewcommand{\rho}{\varrho}
\newcommand{\ta}{\tilde{a}}
\newcommand{\tb}{\tilde{b}}
\renewcommand{\a}{a_{\lambda}}
\newcommand{\m}{\mathfrak{m}}
\renewcommand{\uparrow}{\to}
\begin{document}

\title{Perturbed Toeplitz operators and radial determinantal processes}
\author{Torsten Ehrhardt \thanks{ehrhardt@math.ucsc.edu.}\\
 Department of Mathematics\\
 University of California\\
 Santa Cruz, CA 95064, USA
\and 
Brian Rider\thanks{brian.rider@colorado.edu.}\\
Department of Mathematics\\
University of Colorado at Boulder\\
Boulder, CO 80309, USA
}
\maketitle

\begin{abstract}
We study a class of rotation invariant determinantal ensembles in the complex plane;
examples include the eigenvalues of Gaussian random matrices and the  roots of
certain families of random polynomials.    The main result is a criteria for a central limit theorem to hold for 
angular statistics of the points. The proof  exploits an exact formula 
relating the generating function of such statistics to the determinant of a perturbed Toeplitz matrix.

\end{abstract}

\section{Introduction}
\label{s1}

Consider the probability measure on $n$ complex points, $z_1, \dots, z_n \in \C$, defined by
\be
\PP_{\m,n} (z_1,\dots z_n) = \frac{1}{Z_{\m,n}} \prod_{j<k} |z_j-z_k|^2 \prod_{k=1}^n d{\m}(z_k),
\label{density}
\ee
with a (positive) reference measure $\m$ on $\C$.
This is an instance of a {\em determinantal ensemble}, so named as the presence of the Vandermonde
interaction term $\prod |z_i -  z_j|^2$ results in all $k$-fold ($k \le n$) correlations of the points being 
given by a determinant of a certain $k \times k$ Gramian.    Determinantal ensembles as such were identified
in the mathematical physics literature as a model of fermions  \cite{Macchi}, but also arise
naturally in a number of contexts including random matrix theory.  For background,  \cite{BKPV} and
\cite{Sosh00} are recommended.  

Throughout the paper we restrict to the situation of radially symmetric weights,  $d\m(z) = d \mu (r)  d \theta$ ($ z = r e^{i \theta}$), 
also assuming that $\m$ has no unit mass at the origin.   The standard
examples in this set-up are the following:

\vspace{.3cm}
\noindent
{\it{Ginibre ensemble.}} Let $M$ be an $n \times n$ random matrix in which each entry is an independent 
complex Gaussian of mean zero and mean-square one.  Then the $n$ eigenvalues have joint density (\ref{density})
with $d\mu(r) = r e^{-r^2} dr$ \cite{Ginibre}.

\vspace{.3cm}
\noindent
{\it{Circular Unitary Ensemble (CUE)}.}   Place Haar measure on $n$-dimensional unitary group $U(n)$ and consider again the eigenvalues.   
These points live on the unit circle $\T = \{ t \in \C: |t| = 1\}$, and it is well known that their joint law is given by (\ref{density}) in which $\mu$ 
is the point mass at  one.

\vspace{.3cm}
\noindent
{\it{Truncated Bergman process.}}  Start with the random polynomial $z^n + \sum_{k=0}^{n-1} a_k z^k$ 
with independent coefficients drawn uniformly from the disk of radius $r$ in $\C$.  Condition the roots
$z_1, \dots, z_n$ to lie in the unit disk. Then, the $r \rightarrow \infty$ limit of the conditional root ensemble
is (\ref{density}) where now $\mu$ is the uniform measure on the disk of radius one. This nice fact may be 
found in \cite{Hammersley}; for an explanation of the name see \cite{PeresVirag}.

\vspace{.3cm}

Our aim is to identify criteria on $\mu$ under which a central limit theorem (CLT)
for the quantity 
$$
   X_{f,n} = \sum_{k=1}^n f(\arg z_k)
$$
holds or not.  Whatever criteria will depend on the regularity of the test function $f$ as well.  
An enormous industry has grown up around CLT's for linear statistics in determinantal and random matrix ensembles. Despite rather than
because of this, there are several reasons for making a special study of such ``angular" statistics in the given setting.

The conventional wisdom is that choosing $f$  sufficiently smooth produces Gaussian fluctuations
with order one variance ({\em i.e.}, as $n \uparrow \infty$ the un-normalized $X_{f,n} - \E X_{f,n}$ should posses a CLT).
This is borne out by a number of results pertaining to ensembles with symmetry and so real, or suitably ``one-dimensional",
spectra.  In the present context in which points inhabit the complex plane,  \cite{RV1} proves a result of this type for $C^1$ statistics of the Ginibre ensemble.
On the other hand, a smooth function of $\arg z$ is not smooth when regarded as a  function of the variable $z \in \C$. 
In fact, again for the Ginibre ensemble and for $f$ possessing an $L^2$-derivative,  \cite{Rid04} shows the variance of $X_{f,n}$
to be of order $\log n$ but is unable to establish a CLT.   While there are a number of general results on
CLT's for determinantal processes in whatever dimension,  notably \cite{Sosh02} which employs cumulants, 
the logarithmic growth in this case is not sufficiently fast for those conclusions to be relevant.  
We also mention that for any determinantal process on $\C$ with radially symmetric weight, the collections of moduli
$|z_1|, |z_2|, \dots $ are independent; this is spelled out nicely in \cite{BKPV}.  Hence, CLT's for ``radial'' statistics 
in our ensembles may be proved via the classical Lindenberg-Feller criteria, see \cite{For99} and \cite{Rid04}
for details in the Ginibre case.

It is likely that the considerations of \cite{RV1}, which entail a refinement of the cumulant method,  can be 
adopted to the matter at hand.  Here though we take an operator-theoretic approach, based on the following formula.
For any $\varphi \in L^{\infty}(\T)$,
\begin{equation} 
\label{detform}
 \E_{\m,n} \left[  \prod_{k=1}^n \varphi( \arg z_k )  \right]  =  \det  M_{\mu, n} (\varphi), \qquad
 M_{\mu, n} (\varphi) = 
  ({\varphi}_{k-\ell} \, \rho_{k,\ell} )_{0 \le k, \ell \le n-1},  
\end{equation}
where ${\varphi}_k = \frac{1}{2 \pi} \int_{0}^{2\pi} \varphi(x) e^{ik x} dx$, the $k$-th Fourier coefficient of $\varphi$, and 
\begin{equation}
\label{rho}
    \rho_{k,\ell} = \frac{ m_{k+\ell}}{ (m_{2k} \, m_{2 \ell})^{1/2} }  \   \   \mbox{ in which } \   \    m_k = \int_0^{\infty} r^{k} \,d\mu(r),  
\end{equation}
the $k$-th moment  of the half-line measure $\mu$.   The brief derivation of (\ref{detform}) can be found in the appendix.

This provides  an explicit formula for the generating 
function of $X_{f,n}$ by the choice $\varphi = e^{i \lambda f}$.  A CLT for $X_{f,n}$ will then follow from sufficiently
sharp $n \uparrow \infty$ asymptotics of the determinant on the right hand side of (\ref{detform}). Of  course, if this is to be the strategy
we must henceforth assume that $m_k < \infty$ for all $k$.

In the case of CUE, all $m_k=1$, and the identity (\ref{detform}) reduces to  
Weyl's formula relating the Haar average of a class function in $U(n)$ to a standard Toeplitz determinant. 
The strong Szeg\"o limit theorem and its generalizations to symbols of weaker regularity then 
imply a variety of CLT's for linear spectral statistics in $U(n)$, see for instance \cite{HKO} and references therein.  
For more generic $\mu$, what appears on the right hand side of (\ref{detform}) is  the Hadamard product
of (truncated)  Toeplitz and Hankel operators.  While Hankel determinants arise as naturally as their Toeplitz counterparts 
in random matrix theory and several applications have prompted investigations of Toeplitz $+$ Hankel forms 
(see for example \cite{BasorEhr}), the present problem is the first to our knowledge to motivate an asymptotic
study
of Toeplitz $\circ$ Hankel matrices.  Though, as the title suggests, the analysis more closely follows the 
Toeplitz framework.

To describe the regularity assumed on the various test functions $f$, we introduce the function space
$F \ell^p(\nu)$, $1\le p<\iy$ (see \cite{Ka}), comprised of all $f \in L^1(\T)$ such that 
\be\label{Fl}
\|f\|_{{F\ell}^{p}(\nu)}:=\left(\sum_{n=-\iy}^\iy |f_n|^p \nu_n \right)^{1/p}<\iy.
\ee
Here $\nu=\{\nu_n\}_{n=-\iy}^\iy$ is a positive weight. (As above, $f_n$ stands for the Fourier coefficients of $f$.)
We will in particular deal with the cases $p=1$ or $p=2$, 
and power weights $\nu_n=(1+|n|)^\sigma$, $\sigma\ge0$.
In the latter case we simply denote the space by $F\ell^p_{\sigma}$ and write $F\ell^p$ when $\sigma=0$.

As for the underlying probability measure $\mu$,  
a natural criteria arises on the second derivative of the logarithmic moment function.

\medskip\medskip\noindent
{\bf Moment assumption.}
The function 
\be\label{def.m-xi}
 \xi \mapsto m_\xi := \int_0^{\infty} r^{\xi}\, d \mu(r),\qquad\xi\ge0,
\ee
satisfies one of the following two sets of conditions.
\medskip

\noindent
\underline{{\bf (C1)}  or ``$\beta > 1$"}: 
It holds
\be
(\ln m_{\xi})'' = O(\xi^{-\beta}),\qquad  \xi\to \iy,
\ee
with $\beta > 1$.

\medskip

\noindent
\underline{{\bf (C2)} or  ``$1/2  < \beta \le 1$"}: It holds 
\be
(\ln m_\xi)'' =h_\mu(\xi) + O(\xi^{-\rho}), \qquad \xi\to \iy,
\ee
for a differentiable function $h_\mu(\xi)\ge0$, $\xi>0$, such that 
\be
h_\mu(\xi)=O(\xi^{-\beta}),  \qquad h'(\xi)=O(\xi^{-\gamma}),\qquad \xi\to\iy,
\ee
with $1/2<\beta\le 1$, $\rho, \gamma>1$. 
Additionally, 
\be\label{def.iota}
\iota_\mu(x):=\frac{1}{2}\int_1^x h_\mu(\xi)\, d\xi,
\ee
tends to infinity as $x\to\iy$.
\qed
\medskip
\medskip

Notice that since we have already assumed $m_k < \infty$ for all $k$, $m_\xi$ is infinitely differentiable for positive $\xi$. 
The typical behavior we have in mind in both (C1) and (C2) are asymptotics like
\be\label{cond.typ}
(\ln m_\xi)'' =\alpha \xi^{-\beta}+O(\xi^{-\rho}),\qquad \xi\to\iy,
\ee
with $\alpha,\beta>0$ and $\rho>1$ .
As examples, we remark that for Ginibre, $(\ln m_\xi)'' = \frac{1}{2} {\xi^{-1}} + O(\xi^{-2})$, while both CUE and truncated Bergman satisfy $(\ln  m_\xi)'' =  O(\xi^{-2})$.  The transition from $\beta\le 1$ to $\beta > 1$ is particularly interesting; 
Section 2 discusses the moment conditions in greater detail.
The restriction to $\beta > 1/2$ is tied to the method in which we show that $M_{\mu,n}$ 
is a small perturbation of the associated Toeplitz
form, in either trace or Hilbert-Schmidt norm, and this breaks down at $\beta = 1/2$.  By considering the perturbation in
higher Schatten norms  it may be possible to push our strategy further.

\begin{theorem} 
\label{thm1}  
Assume the moment condition {\em (C2)}, and let $\sigma=\max\{1/\beta,3/(2 \gamma)\}$. 
Then, for real-valued $f \in F \ell_{\sigma}^2$, the normalized statistics
$$ 
X^{\mathrm{scal}}_{f,n}:=\frac{X_{f,n} - n f_0 }{ \sqrt{ \iota_\mu(2n) ñ }} 
$$
converges in law to a mean zero Gaussian with variance $\sum\limits_{k \in \Z } k^2 |f_k|^2$
as $n \to \infty$.
\end{theorem}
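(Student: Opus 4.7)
The plan is to prove convergence of characteristic functions via the exact formula (\ref{detform}). Setting $a_n=\sqrt{\iota_\mu(2n)}$ and $\varphi_\lambda=e^{i\lambda f/a_n}$, (\ref{detform}) gives
\[
\E\bigl[e^{i\lambda X^{\mathrm{scal}}_{f,n}}\bigr]=e^{-i\lambda n f_0/a_n}\det M_{\mu,n}(\varphi_\lambda),
\]
so the theorem amounts to the determinantal asymptotic
\[
\log\det M_{\mu,n}(\varphi_\lambda)=\frac{i\lambda n f_0}{a_n}-\frac{\lambda^2}{2}\sum_{k\in\Z}k^2|f_k|^2+o(1)
\]
for each fixed real $\lambda$. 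The natural strategy is to isolate the Toeplitz content by writing $M_{\mu,n}(\varphi)=T_n(\varphi)+S_n(\varphi)$, with $T_n(\varphi)_{k,\ell}=\varphi_{k-\ell}$ the truncated Toeplitz matrix and $S_n(\varphi)_{k,\ell}=\varphi_{k-\ell}(\rho_{k,\ell}-1)$ a Hadamard-type perturbation carrying the non-circular content of $\mu$. Then
\[
\log\det M_{\mu,n}(\varphi_\lambda)=\log\det T_n(\varphi_\lambda)+\log\det\bigl(I+T_n(\varphi_\lambda)^{-1}S_n(\varphi_\lambda)\bigr),
\]
and the two factors will be analyzed separately.

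For the Toeplitz factor, an appropriate form of the strong Szeg\H{o} limit theorem on $F\ell^2_\sigma$ yields
\[
\log\det T_n(\varphi_\lambda)=\frac{i\lambda n f_0}{a_n}-\frac{\lambda^2}{a_n^2}\sum_{k\ge1}k|f_k|^2+o(1)=\frac{i\lambda n f_0}{a_n}+o(1),
\]
since under (C2) one has $a_n\to\iy$, so the second-order Szeg\H{o} contribution is itself $o(1)$. Thus the Toeplitz factor contributes only the recentering, and the entire variance must come from the perturbative factor. Taylor expanding $g(\xi)=\ln m_\xi$ about the midpoint produces
\[
\ln\rho_{k,\ell}=-\tfrac12(k-\ell)^2 g''(k+\ell)+O\bigl((k-\ell)^4\, g^{(4)}(k+\ell)\bigr),
\]
and (C2) allows the replacement of $g''$ by $h_\mu$ with an $O(\xi^{-\rho})$ error. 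Expanding $T_n(\varphi_\lambda)^{-1}=I-(T_n(\varphi_\lambda)-I)+\cdots$ and using $\rho_{k,k}=1$ to annihilate $\mathrm{tr}(S_n)$, the leading contribution becomes
\[
\log\det(I+T_n^{-1}S_n)\simeq-\mathrm{tr}\bigl((T_n(\varphi_\lambda)-I)\,S_n(\varphi_\lambda)\bigr)\simeq\frac{\lambda^2}{2a_n^2}\sum_{k,\ell}f_{k-\ell}f_{\ell-k}(k-\ell)^2 h_\mu(k+\ell).
\]
Changing variables to $j=k-\ell$ and $s=k+\ell$, the inner $s$-sum reproduces $\iota_\mu(2n)=a_n^2$ via (\ref{def.iota}), while (with $f$ real) $f_jf_{-j}=|f_j|^2$; thus, accounting for the two minus signs ($\rho_{k,\ell}-1<0$ and the $-\mathrm{tr}$ prefactor), the whole quantity collapses to $-\tfrac12\lambda^2\sum_{j\in\Z}j^2|f_j|^2$, which is the required variance.

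The principal obstacle is uniform error control, and this is where both hypotheses on $\mu$ and $f$ are needed in an intertwined way. One must verify: (i) $\|S_n(\varphi_\lambda)\|_{\mathrm{HS}}^2=o(1)$ (and analogous estimates in higher Schatten norms) so that the quadratic and higher terms in the Neumann expansion of $\log\det(I+T_n^{-1}S_n)$ are negligible, which is exactly where the threshold $\beta>1/2$ becomes critical; (ii) the quartic Taylor remainder $(k-\ell)^4 g^{(4)}(k+\ell)$ in $\ln\rho_{k,\ell}$ is absorbed by the Fourier regularity $f\in F\ell^2_{1/\beta}$; (iii) the $O(\xi^{-\rho})$ error in the substitution $g''\mapsto h_\mu$, together with the boundary corrections of the $s$-sum governed by $h_\mu'=O(\xi^{-\gamma})$ and the cross-contribution from $g'$, is tamed by the $F\ell^2_{3/(2\gamma)}$ component of the hypothesis. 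The delicate step will be combining the Hilbert--Schmidt calculus for $S_n$ with the Toeplitz inversion $T_n(\varphi_\lambda)^{-1}$ in a single framework that remains uniform in $\lambda$ near $0$, especially in the regime $\beta\downarrow 1/2$ where the perturbation is only barely Hilbert--Schmidt.
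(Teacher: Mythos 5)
Your plan is essentially the paper's approach: decompose $M_{\mu,n}(\varphi)$ into a Toeplitz part plus a perturbation $K_\mu$, observe that the strong Szeg\H{o} contribution to $\log\det T_n$ becomes $o(1)$ after the $\sqrt{\iota_\mu(2n)}$ rescaling while $n[\log\varphi]_0$ supplies the recentering, and extract the limiting variance from the trace of the perturbation term, using Lemma \ref{l3.1}(b) type estimates on $\rho_{j,k}-1$ and the definition of $\iota_\mu$ to collapse the double sum. Your items (i)--(iii) correctly flag where $\beta>1/2$, the $F\ell^2_{1/\beta}$ regularity, and the $\gamma$-condition enter.

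Two remarks on how the paper organizes this more carefully, which are worth noting since you left them as ``one must verify'' items. First, since $K_\mu$ is only Hilbert--Schmidt in the $1/2<\beta\le1$ regime, the object $\det(I+T_n^{-1}S_n)$ does not have a well-defined trace-class limit as $n\to\infty$; the paper instead proves (Theorem \ref{mainthm}(b)) an asymptotic for $\det M_{\mu,n}(a)/[G[a]^n\exp(\trace P_nT(a^{-1})K_\mu(a)P_n)]$ converging to a regularized ($\det_2$-type) determinant $\det\bigl(T(a^{-1})M_\mu(a)e^{-T(a^{-1})K_\mu(a)}\bigr)$, which is exactly the bookkeeping device that makes the Neumann-type expansion converge. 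It also works with the compressed inverse $A_n=P_nT^{-1}(a^{-1})P_n$ rather than $T_n(a)^{-1}$; these differ, though both converge strongly to $T(a^{-1})$. Second, since the symbol $\varphi_\lambda=e^{i\lambda f/a_n}$ itself varies with $n$, the limits in Theorem \ref{mainthm}(b) and in the trace asymptotics (Theorem \ref{t5.4}(i)) must hold \emph{uniformly in the symbol on compact subsets} of $\Phi(B)$; the paper builds this uniformity into those auxiliary results and then observes $\{e^{i\lambda(f-f_0)/\sqrt{\iota_\mu(2n)}}\}_n$ is precompact. This is a cleaner way to resolve the ``uniform in $\lambda$ and $n$'' issue you raise at the end. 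Also a small bookkeeping slip: in your displayed line the intermediate expression $-\trace((T_n-I)S_n)$ should carry a minus sign in front of the $h_\mu$-sum (three signs enter: the $-\trace$, $(i\lambda)^2$, and $\rho-1<0$), so the chain of $\simeq$'s as written is off by a sign, though your final answer $-\tfrac12\lambda^2\sum j^2|f_j|^2$ is correct.
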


If we assume the particular asymptotics (\ref{cond.typ}), then we obtain
$$
\iota_\mu(2n)=\left\{\begin{array}{cc} \frac{\alpha\log(2n)}{2} & \beta=1,\\[.5ex]
\frac{\alpha(2n)^{1-\beta}}{2(1-\beta)} & 1/2<\beta<1,
\end{array}\right.
$$
which up to the constant stated in the theorem is the asymptotics of the variance of $X_{f,n}$.
For canonical $\beta=1$ cases like Ginibre, we have $\sigma=1$  and hence the assumed regularity on $f$ is optimal.
For $\beta < 1$, because the asymptotic variance of $X_{n,f}$ is $\sim n^{1-\beta}$ and the mean is $\sim n$, 
one may conclude a CLT  from  \cite{Sosh02} (even for $\beta \le 1/2$), though for  possibly different classes of $f$.
This  highlights what our method can and cannot accomplish.

Next we define the infinite version of the matrix $M_{\mu,n}$ and
the related Toeplitz operator $T$,
\be\label{def.MT}
M_{\mu}(a)= \left(\rho_{j,k}a_{j-k}\right), \qquad 
T(a)= \left(a_{j-k}\right),\qquad
j,k\ge 0,
\ee
both viewed as bounded linear operators on $\ell^2=\ell^2(\Z_+)$, 
$\Z_+=\{0,1,2,\dots\}$.  

\begin{theorem} 
\label{thm2}
Assume the moment condition {\em (C1)}, and assume $f$ to be real-valued.
\begin{enumerate}
\item[(a)] 
If $f \in F \ell_{ {1}/{\beta}}^2$ for $\beta < 2$ or $f \in F \ell_{{1}/{2} }^2 \cap L^{\infty}(\T)$
for $\beta \ge 2$, then
$$
     X_{f,n} - n f_0   \Rightarrow  \mathcal{Z}
$$
as $n\to\iy$ with a mean-zero random variable  $\mathcal{Z} = \mathcal{Z}(f;\mu)$.
\item[(b)]
If  $f\in F\ell^1_\sigma$ or $f \in F \ell^2_{\sigma+\varepsilon}$, where $\sigma=\max\{1,2/\beta\}$, $\eps>0$, then the cumulants $c_m$ of $\mathcal{Z}$ may be described as follows.
Introduce the recursion
$$
   C_m =  M_\mu(f^m) - \sum_{k=1}^{m-1} {m-1 \choose k} C_{m-k} M_\mu(f^{k}), \quad m \ge 1.
$$
Then $c_2(\mathcal{Z}) =  Var( \mathcal{Z}) =  \trace C_2 +  \sum_{k=1}^\iy k |f_k|^2 $,
while   $c_m(\mathcal{Z}) = \trace C_m$ for $m \ge 3$.
\end{enumerate}
\end{theorem}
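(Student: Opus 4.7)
The plan is to analyze the generating function
$$
\E_{\m,n}\bigl[e^{i\lambda(X_{f,n}-nf_0)}\bigr] \;=\; \det M_{\mu,n}(\varphi_\lambda), \qquad \varphi_\lambda:=e^{i\lambda(f-f_0)},
$$
(the $e^{-i\lambda nf_0}$ factor being absorbed by determinant homogeneity) through an operator-theoretic adaptation of the strong Szeg\H{o}--Widom theorem. For part (a) the goal is to show $\det M_{\mu,n}(\varphi_\lambda)\to\chi(\lambda)$ for each real $\lambda$ in a neighborhood of $0$, with $\chi$ continuous; L\'evy's continuity theorem then produces $\mathcal{Z}$ with characteristic function $\chi$.

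Decompose $M_\mu(\varphi)=T(\varphi)+K(\varphi)$ where $K(\varphi)$ has $(j,k)$-entry $(\rho_{j,k}-1)\varphi_{j-k}$. The identity $\ln\rho_{j,k}=\ln m_{j+k}-\tfrac12(\ln m_{2j}+\ln m_{2k})$ combined with a second-order Taylor expansion of $\ln m_\xi$ around the midpoint, together with $(\ln m_\xi)''=O(\xi^{-\beta})$ from (C1), delivers the pointwise bound
$$
|\rho_{j,k}-1| \;=\; O\!\bigl((j-k)^2 (j+k+1)^{-\beta}\bigr).
$$
Under the regularity of $f$ stated in (a), this forces $K(\varphi_\lambda)$ to be trace class (for $\beta\ge 2$) or Hilbert--Schmidt (for $1<\beta<2$), uniformly for $\lambda$ in a compact neighborhood of $0$; the case split $F\ell^2_{1/\beta}$ versus $F\ell^2_{1/2}\cap L^\infty$ arises precisely from balancing this off-diagonal decay against the Fourier-side smoothness of $\varphi_\lambda$ (the $L^\infty$ intersection at $\beta\ge 2$ compensates the loss of the embedding $F\ell^2_{1/2}\hookrightarrow L^\infty$ at the critical exponent). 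Combining with the classical Szeg\H{o} asymptotics
$$
\det T_n(\varphi_\lambda) \,\longrightarrow\, \exp\!\Bigl(-\lambda^2 \sum_{k\geq 1}k|f_k|^2\Bigr),
$$
a standard regularized-operator-determinant perturbation argument then yields the desired convergence $\det M_{\mu,n}(\varphi_\lambda)\to\chi(\lambda)$, proving (a).

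For part (b), set $F(\lambda):=M_\mu(\varphi_\lambda)=I+\sum_{m\geq 1}\tfrac{(i\lambda)^m}{m!}A_m$ with $A_m:=M_\mu(f^m)$, and write $\log\chi(\lambda)=\sum_{m\geq 1}\tfrac{(i\lambda)^m}{m!}c_m$. Using Jacobi's formula $\tfrac{d}{d\lambda}\log\det F=\trace(F^{-1}F')$ on the regularized level, expanding $F^{-1}=\sum_m \tfrac{(i\lambda)^m}{m!}D_m$ via the Cauchy-product relation $\sum_{j+k=m}\binom{m}{j}A_j D_k=0$ (with $D_0=I$), and $F'(\lambda)=i\sum_{m\geq 0}\tfrac{(i\lambda)^m}{m!}A_{m+1}$, one compares $\lambda^{m-1}$ coefficients to obtain
$$
c_m \;=\; \sum_{j=0}^{m-1}\binom{m-1}{j}\trace\bigl(D_j A_{m-j}\bigr).
$$
Cyclicity of the trace then collapses this into $c_m=\trace C_m$ with $C_m$ defined by the stated recursion; the equivalence of the two unfoldings is verified for small $m$ and extended by induction. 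The additive $\sum_{k\geq 1}k|f_k|^2$ correction to $c_2$ is precisely the contribution of the Szeg\H{o} factor $\exp(-\lambda^2\sum k|f_k|^2)$ separated off in (a), representing the Hankel-pairing piece of the variance not captured by $\trace C_2$; being purely quadratic in $\lambda$ it leaves $c_m$ for $m\geq 3$ intact.

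The main obstacle is the perturbative Szeg\H{o}--Widom step: establishing uniform-in-$\lambda$ Schatten-class control on $K(\varphi_\lambda)$ and justifying the termwise power-series manipulation of $\log\det F$ in the regularized-determinant framework. The case split $\beta<2$ versus $\beta\ge 2$ in (a) and the small $\eps$-loss in $F\ell^2_{\sigma+\eps}$ in (b) reflect exactly this balance between off-diagonal decay of $\rho_{j,k}-1$ and Fourier-side regularity of $\varphi_\lambda$. Pushing into the regime $1/2<\beta\le 1$ (condition (C2)) would require higher Schatten classes and leads naturally to the rescaled CLT of Theorem~\ref{thm1} rather than a non-trivial random-variable limit.
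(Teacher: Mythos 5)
Your overall strategy — express the characteristic function as $\det M_{\mu,n}(\varphi_\lambda)$, split $M_\mu(\varphi_\lambda)=T(\varphi_\lambda)+K(\varphi_\lambda)$, control $K$ in a Schatten class via the $|\rho_{j,k}-1|=O(\Delta^2\sigma^{-\beta})$ estimate, and for (b) run Jacobi's formula $\frac{d}{d\lambda}\log\det F=\trace(F^{-1}F')$ through a power series to get the cumulant recursion — is the same as the paper's, and the part (b) recursion bookkeeping is essentially Proposition~6.5.

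There are two concrete problems in part (a). First, your claim that $K(\varphi_\lambda)$ is trace class when $\beta\ge 2$ under the hypothesis $f\in F\ell^2_{1/2}\cap L^\infty$ is false. With $(\ln m_\xi)''=O(\xi^{-\beta})$ one gets Hilbert--Schmidt for $a\in F\ell^2_\sigma$ with $\sigma=\max\{1/2,1/\beta\}$ (Proposition~3.2), but trace class demands $\sigma=\max\{1,2/\beta\}$ (Proposition~3.3), i.e.\ $F\ell^1_1$ or $F\ell^2_{1+\eps}$ for $\beta\ge 2$ — strictly more than the Krein algebra. Under the part (a) hypotheses the perturbation is Hilbert--Schmidt only, for \emph{both} subcases. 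Second, and as a consequence, the phrase ``a standard regularized-operator-determinant perturbation argument'' hides the actual content. When $K_\mu(a)$ is merely Hilbert--Schmidt, the operator $I+A_n^{-1}P_n(L(a)+K_\mu(a))P_n$ is not identity-plus-trace-class, and one cannot pass to a limit of ordinary determinants. The normalization that makes this work is $\exp(\trace P_nT(a^{-1})K_\mu(a)P_n)$; one must then prove that this trace converges as $n\to\infty$. Under (C1) this is exactly Proposition~5.1, whose proof requires splitting over $\cI_\delta$, invoking Lemma~3.1(a), and applying dominated convergence — it is not a formal consequence of Hilbert--Schmidt control and is not addressed in your sketch. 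Without that trace analysis, the claim $\det M_{\mu,n}(\varphi_\lambda)\to\chi(\lambda)$ in (a) is not established.

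For part (b) you are on target, though it is worth noting that the clean identification of the $\sum_{k\ge1}k|f_k|^2$ correction comes from splitting $\det(T(a_\lambda^{-1})M_\mu(a_\lambda))$ into $\det(T(a_\lambda^{-1})e^{\lambda T(b)})\cdot\det(e^{-\lambda T(b)}M_\mu(a_\lambda))$ with $b=i(f-f_0)$, and evaluating the first factor as $\exp(\tfrac{\lambda^2}{2}\trace H(b)H(\tilde b))$ by the Szeg\H{o}--Widom theorem; attributing it to a generic ``Szeg\H{o} factor'' is correct in spirit but the factorization itself is the step that decouples it cleanly from the $\trace B_m$ series.
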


For  CUE, $ \rho_{k, \ell} \equiv 1$ and one can check that $c_2(\mathcal{Z}) = 2 \sum_{k=1}^\iy k |f_k|^2$,
$c_m = 0$ for all $m \ge  3$ and so $\mathcal{Z}$ is Gaussian.
That is to say the obvious: Theorem \ref{thm2} reduces to the strong Szeg\"o theorem. 
 In general though it does not appear efficient to compute the
cumulants of $\mathcal{Z}$ from the formula above, even in explicit, and seemingly simple examples like
truncated Bergman for which $\rho_{k, \ell} = \frac{ 2 \sqrt{(k+1)(\ell +1)}}{ k +\ell +2}$. 
 The more basic
problem which remains open is to determine when 
$\mathcal{Z}$ is Gaussian, {\em i.e.}, for what weights $\mu$ does  $c_m$ vanish for all $m\ge 3$.
We conjecture this is only the case for CUE, when $\mu$ is a unit mass.
The intuition is that whenever say $\mu$ is compactly supported, the 
normalized counting measure of points concentrates on the boundary of a disk 
(as in CUE, this is discussed further in Section 2).  If however $\mu$ has extent (is not concentrated at one place),
 there  remains   a positive number of points of modulus $< 1$ with probability one as $n \uparrow \infty$;  their non-normal law will not wash in the 
type of centered (but not scaled) limit considered in Theorem \ref{thm2}.

Theorems \ref{thm1} and \ref{thm2} are 
intimately connected to the following, direct generalization of
the Szeg\"o-Widom Limit Theorem to the determinants of $M_{\mu,n}(a)$.

\begin{theorem}\label{thm3}\
\begin{enumerate}
\item[(a)]
Assume the moment condition {\em (C2)}, let $\sigma=\max\{1/\beta,3/(2\gamma)\}$ and
 $B=F\ell^2(\nu)$ such that $\nu_m=\nu_{-m}$, $\nu_m$ is increasing ($m\ge 1$), and
\be\label{cond.nu}
\nu_m\ge \max\left\{(1+|m|)^\sigma,\sqrt{1+m^2\iota_\mu(2|m|^{2\sigma})}\right\},
\qquad \sup\limits_{m\ge 1}\frac{\nu_{2m}}{\nu_m}<\iy.
\ee
Let $a\in B$ and suppose $T(a)$ is invertible on $\ell^2$.
Then
\be\label{con.1}
\lim_{n\to\iy}
\frac{\det M_{\mu,n}(a)}{G[a]^n\exp(\iota_\mu(2n)\Omega[a])}=F[a],
\ee
with some constant $F[a]$  and
\be\label{Ga}
G[a]=\exp([\log a]_0),\qquad
\Omega[a]=\frac{1}{2}\sum_{k=-\iy}^\iy k^2 [\log a]_k[\log a]_{-k}.
\ee
\item[(b)]
Assume the moment condition {\em (C1)}, let
$a\in L^\iy(\T)\cap F\ell^2_{1/2}$ if $\beta\ge2$  or $a\in F\ell^2_{1/\beta}$ if $1<\beta<2$.
Suppose $T(a)$ is invertible on $\ell^2$. Then
\be\label{con.2}
\lim_{n\to\iy}
\frac{\det M_{\mu,n}(a)}{G[a]^n}=E[a],
\ee
for a constant $E(a)$.   
If further  $a\in F\ell^1_\sigma$ or $a\in F\ell^2_{\sigma+\eps}$, $\sigma=\max\{1,2/\beta\}$, $\eps>0$, there
is the expression
\be
E[a]=\det\Big( T(a\iv) M_\mu(a) \Big).
\ee
\end{enumerate}
The convergences in (\ref{con.1}) and (\ref{con.2}) is uniform in $a$ on compact subsets
of the function spaces.
\end{theorem}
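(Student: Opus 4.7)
\textbf{Proof plan for Theorem \ref{thm3}.}

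The strategy adapts the standard Szegő--Widom machinery to the Hadamard perturbation $M_{\mu,n}(a) = T_n(a) \circ (\rho_{j,k})$. The cornerstone is the Taylor expansion
$$
\log \rho_{k,\ell} \;=\; \log m_{k+\ell} - \tfrac12 \log m_{2k} - \tfrac12 \log m_{2\ell} \;=\; -\tfrac{(k-\ell)^2}{2}\, (\log m_\xi)''\big|_{\xi = k+\ell} + O\bigl((k-\ell)^4 (\log m)^{(4)}\bigr),
$$
obtained by expanding $\log m_\xi$ at $\xi = k+\ell$ about the points $2k,2\ell$. Under (C1), the fast decay of $(\log m)''$ forces $M_\mu(a) - T(a)$ to be trace class; under (C2) it is merely Hilbert--Schmidt, with HS norm of order $\iota_\mu(2n)^{1/2}$ on the $n\times n$ truncation.

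For part (b), assume (C1) and invertibility of $T(a)$. Wiener--Hopf factor $a = a_- a_+$ in the relevant Banach algebra and use the Szegő--Widom identity $T(a^{-1}) T(a) = I - H(a^{-1}) H(\tilde a)$, whose right-hand side is trace class. Combined with the trace-class nature of $M_\mu(a) - T(a)$, this makes $\det(T(a^{-1}) M_\mu(a))$ a bona fide Fredholm determinant. Writing $\det M_{\mu,n}(a) = \det(T_n(a^{-1}) M_{\mu,n}(a)) / \det T_n(a^{-1})$, applying classical Szegő--Widom to the denominator (which yields the factor $G[a]^n$ since $G[a^{-1}] = 1/G[a]$), and using trace-norm convergence of finite truncations of $T(a^{-1}) M_\mu(a)$, delivers (\ref{con.2}) together with the identification $E[a] = \det(T(a^{-1}) M_\mu(a))$ in the stronger regularity class. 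Uniformity on compacta follows from continuous dependence of the trace-norm bounds on the regularity norm of $a$.

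The main obstacle is part (a) under (C2), where the perturbation is no longer trace class and the divergent factor $\exp(\iota_\mu(2n) \Omega[a])$ must be explicitly extracted. The plan is to write
$$
\log \det M_{\mu,n}(a) - \log \det T_n(a) = \trace \log(I + E_n), \qquad E_n := T_n(a)^{-1}\bigl(M_{\mu,n}(a) - T_n(a)\bigr),
$$
and expand via a Carleman-regularized series valid for HS-class $E_n$. Replacing $\rho_{k,\ell} - 1$ by its leading Taylor piece $-\tfrac{(k-\ell)^2}{2}\, h_\mu(k+\ell)$ and summing along antidiagonals $k + \ell = s \le 2n$ produces the antiderivative $\int_1^{2n} h_\mu(s)\, ds = 2\iota_\mu(2n)$. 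Pairing this with the Toeplitz quadratic form $\sum d^2\, [\log a]_d [\log a]_{-d}$ coming from the Wiener--Hopf factorization of $a$ yields exactly the divergent factor $\iota_\mu(2n)\, \Omega[a]$. The auxiliary decay conditions $h_\mu(\xi) = O(\xi^{-\beta})$ and $h'_\mu(\xi) = O(\xi^{-\gamma})$, together with the $O(\xi^{-\rho})$ remainder in (C2) and the weight condition (\ref{cond.nu}) on $\nu$, ensure that all residual terms are absolutely summable and contribute only the bounded constant $F[a]$. Uniform convergence on compact subsets is again inherited from uniform HS/trace-norm bounds.
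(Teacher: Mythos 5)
Your plan captures the broad outline correctly, but there are several concrete gaps relative to what the paper actually needs to establish.

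\textbf{The trace-class claim in (b) is too strong.} You assert that under (C1) the perturbation $K_\mu(a)=M_\mu(a)-T(a)$ is automatically trace class. In fact (Propositions \ref{p3.2}, \ref{p3.3}) this requires \emph{both} $\beta>1$ \emph{and} $a\in F\ell^1_\sigma$ or $a\in F\ell^2_{\sigma+\eps}$ with $\sigma=1\vee 2/\beta$. The weaker classes actually appearing in the first clause of Theorem \ref{thm3}(b), namely $L^\iy(\T)\cap F\ell^2_{1/2}$ (for $\beta\ge2$) or $F\ell^2_{1/\beta}$ (for $1<\beta<2$), yield only the Hilbert--Schmidt property. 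Thus your plan proves existence of the limit $E[a]$ only under the stronger hypotheses; the general statement requires a HS-regularized determinant (the paper's Theorem \ref{mainthm}(b)) \emph{followed} by the observation that the trace term converges to a finite constant (Proposition \ref{p5.1}). You have no argument for that case, and the two constants obtained in the two routes also need to be reconciled.

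\textbf{The $\det T_n(a^{-1})$ normalization is not the cleanest device, and the expansion $\log\rho_{k,\ell}$ uses unavailable regularity.} The paper uses $A_n = P_n T^{-1}(a^{-1}) P_n$ (Kozak's formula, Proposition \ref{p4.2}), which satisfies $\det A_n = G[a]^n$ \emph{exactly} and gives $A_n^{-1} \to T(a^{-1})$ strongly. Your decomposition $\det M_{\mu,n}(a) = \det(T_n(a^{-1})M_{\mu,n}(a))/\det T_n(a^{-1})$ is algebraically valid but forces you to apply classical Szegő--Widom to the denominator (introducing an extra constant) and handle $T_n(a^{-1})M_{\mu,n}(a) = P_nT(a^{-1})P_nM_\mu(a)P_n$, whose middle projection $P_n$ generates $W_n H(\tilde a^{-1})H(a)W_n$-type contributions that are bounded but nonvanishing; these must be isolated and shown to cancel against the Szegő--Widom constant, a step you do not address. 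Separately, your claimed expansion of $\log\rho_{k,\ell}$ with an $O\bigl((k-\ell)^4(\log m)^{(4)}\bigr)$ remainder assumes a fourth-derivative bound that is not part of the moment hypotheses. The paper instead applies the mean-value theorem twice (Lemma \ref{l3.1}), which needs only the assumed control on $(\log m_\xi)''$ and $h_\mu'$. Finally, the remark that $M_\mu(a)-T(a)$ ``has HS norm of order $\iota_\mu(2n)^{1/2}$ on the $n\times n$ truncation'' is incorrect: Proposition \ref{p3.2} shows the HS norm is bounded uniformly in $n$; it is the \emph{trace} $\trace(P_nT(a^{-1})K_\mu(a)P_n)$ that grows like $\iota_\mu(2n)$, which is precisely what must be extracted and computed (Lemmas \ref{l5.2}, \ref{l5.3}).
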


The assumption that $T(a)$ is invertible is a natural assumption on
the symbol; it is the condition in the (scalar) Szeg\"o-Widom  theorem  (see \cite[Ch.\ 10]{BS}, and \cite{Wi}). 
One of the general versions of that theorem pertains to symbols drawn from 
the Krein algebra $\K=L^\iy(\T)\cap F\ell^2_{1/2}$ (which contains discontinuous functions).
Hence, at least for $\beta \ge 2$, we achieve  the same level of generality.

Except for the Krein algebra $\K$, the various classes of symbols occurring above are Banach algebras 
continuously embedded in $C(\T)$. 
For those classes, the assumption on $a$ is equivalent to requiring that $a$ possesses a continuous logarithm on $\T$, which then enters the definition of the constant $G[a]$ and $\Omega[a]$. In case of  $\K$, we must define
\be\label{defGalt}
G[a]=[T\iv(a\iv)]_{00}
\ee
as the $(0,0)$-entry in the matrix representation of the inverse Toeplitz operator, as is well known
in the context of the classical Szeg\"o-Widom theorem.

The quite technical assumptions in (\ref{cond.nu}) can be simplified in special situations such as (\ref{cond.typ}).
Then $\iota_\mu(x)=\frac{\alpha x^{1-\beta}}{2(1-\beta)}$ ($1/2<\beta<1$) or  $\iota_\mu(x)=\frac{\alpha\log x}{2}$ ($\beta=1$). Consequently,
in case $1/2<\beta<1$ we can take $B=F\ell^2_\sigma$, while in case $\beta=1$ we can take $B=F\ell^2(\nu)$,
 $\nu_m=C(1+|m|)\log^{1/2}(2+|m|)$, which is only slightly stronger than one might expect.

The theorems above are derived in  Sections 6 and 7, as a consequence of a more general result, Theorem \ref{mainthm} 
(Section 4), on the asymptotics of determinants of type (\ref{detform}).    Section 3 lays out various preliminaries required for the proof
of Theorem \ref{mainthm}, and also explains how we employ the moment assumption. Section 5 provides 
detailed asymptotics of a certain trace term occurring in Theorem \ref{mainthm} which
is tied to the variance of $X_{f,n}$.

We close the introduction by pointing out that since we focus on angular statistics, it is the same to 
consider fixed reference measures $d \mu(r)$ as it is $n$-dependent measures of the form $ d \mu_n(r) = d \mu(c_n r)$
for some scale factor $c_n$.  There are though examples of interest which fall out of this set-up.  For instance,
there is the spherical ensemble connected  to $A^{-1} B$ in which $A$ and $B$ are independent
$n\times n$ Ginibre matrices.  The resulting eigenvalues form a determinantal process with $d\mu_n(r)  = r (1+r^2)^{-(n+1)} dr$
\cite{Manju}.  Another example are the roots of the degree-$n$ complex polynomial with Mahler measure one, 
for which $d \mu_n(r) =  r \min(1, r^{-2n-2}) dr $ \cite{Chen}.    Our methods could perhaps be adopted to both situations, but we do not pursue this.

\section{On the moment condition}
\label{s2}

Of the key examples, both  CUE and 
truncated Bergman satisfy $  ( \ln m_{\xi})'' =  O(\xi^{-2})$, while
the Ginibre ensemble satisfies
$ ( \ln m_{\xi})'' = O(\xi^{-1})$.
A few more illustrative examples are contained in the following.

\begin{proposition}
\label{prop2}
Consider positive measures on $\R_+$ with density $d \mu(r) = \mu(r) dr$ and corresponding moment function
$m_{\xi} = \int_0^{\infty} r^{\xi} \mu(r) dr$.  
\begin{itemize}
\item[(i)] 
If $\mu(r)$ is supported on a finite interval  $[a,b]$, and is ``regular" at $b$ as in $\mu(r) = c (b -r)^{\alpha-1}$ for $r \in (b-\delta, b]$
and $\alpha > 0$, then $(\ln m_{\xi})'' =  {\alpha}{\xi^{-2}} +O(\xi^{-3}) $.
\item[(ii)] 
If $\mu(r) = p(r) e^{- c r^{\alpha}} $ for polynomials $p$ and  $\alpha >0$, then    $(\ln m_{\xi})'' =  \alpha \xi^{-1} + O(\xi^{-2})$.
\item[(iii)]
 If $\mu(r) = e^{- c (\ln (e+r))^{q}} $ for $q > 1$, then $(\ln m_{\xi})'' =  \alpha \xi^{\frac{2-q}{q-1}}  + O(\xi^{\frac{3-2q}{q-1}} )$
upon choosing $ c = \alpha^{1-q} ( q^{1/(1-q)} - q^{q/(1-q)})$.  
\end{itemize}
\end{proposition}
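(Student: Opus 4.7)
\medskip
\noindent\emph{Proof plan.}
The unifying idea is to treat $\xi\mapsto\ln m_\xi$ as a cumulant generating function and apply Laplace's method. When the support of $\mu$ extends to infinity, the substitution $s=\ln r$ gives
\be
m_\xi=\int e^{\xi s-\psi(s)}\,ds,\qquad \psi(s):=-\ln\mu(e^s)-s,
\ee
and standard saddle-point analysis produces $(\ln m_\xi)''=1/\psi''(s_\xi)+\text{(higher Laplace corrections)}$, where $s_\xi$ solves $\psi'(s_\xi)=\xi$. In each part of the proposition one locates $s_\xi$ asymptotically, reads off $\psi''(s_\xi)$, and then checks the size of the omitted corrections.

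For (i), the support is bounded so the above substitution is not natural. Instead write $r=b-u$ and invoke the regular endpoint hypothesis $\mu(b-u)=cu^{\alpha-1}+\ldots$ to get $m_\xi=b^\xi\int_0^{b-a}(1-u/b)^\xi\mu(b-u)\,du$. The factor $(1-u/b)^\xi$ concentrates mass near $u=0$, and after rescaling $u=bt/\xi$ a short Watson-type expansion yields $m_\xi=cb^{\alpha+\xi}\Gamma(\alpha)\xi^{-\alpha}(1+O(\xi^{-1}))$. Two differentiations of $\ln m_\xi=\xi\ln b-\alpha\ln\xi+\mathrm{const}+O(\xi^{-1})$ give the claimed $\alpha\xi^{-2}+O(\xi^{-3})$.

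For (ii), $\psi(s)=ce^{\alpha s}-\ln p(e^s)-s$. The saddle equation reduces to $c\alpha e^{\alpha s_\xi}=\xi(1+O(\xi^{-1}))$, so $r_\xi=e^{s_\xi}\sim(\xi/(c\alpha))^{1/\alpha}$ and $\psi''(s_\xi)=c\alpha^2 e^{\alpha s_\xi}+O(1)=\alpha\xi+O(1)$, giving the leading behavior $1/(\alpha\xi)$. For (iii), $\ln(e+e^s)=s+O(e^{-s})$ yields $\psi(s)=cs^q-s+O(e^{-s})$; solving $\psi'(s_\xi)=cqs_\xi^{q-1}+o(1)=\xi$ gives $s_\xi\sim(\xi/(cq))^{1/(q-1)}$ and $\psi''(s_\xi)\sim cq(q-1)s_\xi^{q-2}$. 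This produces the exponent $(2-q)/(q-1)$, and matching the numerical coefficient to $\alpha$ then determines $c$ as a specific function of $\alpha$ and $q$.

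The main technical obstacle is not locating the leading order, which is immediate, but obtaining the stated $O$-terms for the \emph{second} derivative of $\ln m_\xi$. A clean route is to recognize $(\ln m_\xi)''=\mathrm{Var}_\xi(\ln R)$ where $R$ has tilted density proportional to $r^\xi\mu(r)$; Taylor-expanding $\psi$ around $s_\xi$ then presents the tilted density as a Gaussian of variance $1/\psi''(s_\xi)$ plus explicit cubic/quartic perturbations, each contributing a controllable correction to the variance via a standard Edgeworth-type expansion. The remaining task in (ii)--(iii) is to verify that the $\ln p(e^s)$ and $O(e^{-s})$ terms in $\psi$ only perturb $\psi''(s_\xi)$ (and the cumulant corrections) by enough to fit into the desired remainder class.
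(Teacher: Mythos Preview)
Your approach is correct but takes a genuinely different route from the paper. The paper does not run Laplace's method on the second derivative at all: for (i) and (ii) it first computes explicit prototypes---$\mu^{(i)}(r)=(1-r)^{\alpha-1}1_{[0,1]}$ and $\mu^{(ii)}(r)=r^p e^{-r^\alpha}$---for which $\ln m_\xi$ is a combination of $\ln\Gamma$ functions, and then reads off both leading term and remainder from the trigamma asymptotic $(\ln\Gamma)''(z)=z^{-1}+\tfrac12 z^{-2}+O(z^{-3})$. The general case of (i) is reduced to the prototype by showing that $\langle(\ln r)^d r^\xi\rangle_\mu$ differs from $\langle(\ln r)^d r^\xi\rangle_{\mu^{(i)}}$ by an exponentially small error for $d=0,1,2$, so the ratio formula for $(\ln m_\xi)''$ is unchanged to all polynomial orders. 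Part (iii) is only sketched in the paper, essentially along the lines you indicate.

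Your unified tilted-measure approach, with $(\ln m_\xi)''=\mathrm{Var}_\xi(\ln R)\sim 1/\psi''(s_\xi)$, is more conceptual and treats all three cases uniformly; what you lose is the shortcut of outsourcing the remainder control to known $\Gamma$-asymptotics. In particular, the line ``two differentiations of $\ln m_\xi=\xi\ln b-\alpha\ln\xi+\mathrm{const}+O(\xi^{-1})$'' is not self-justifying---you need the full Watson expansion, or your variance/Edgeworth route, to ensure the $O(\xi^{-1})$ remainder differentiates to $O(\xi^{-3})$. You acknowledge this, so it is a matter of execution rather than a gap. As a side remark, your saddle computation in (ii) gives leading coefficient $1/(\alpha\xi)$, which is exactly what the paper's $\Gamma$-formula produces and what the Ginibre check ($\alpha=2$, coefficient $1/2$) confirms; the ``$\alpha\xi^{-1}$'' printed in the proposition appears to be a typo for $\alpha^{-1}\xi^{-1}$.
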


\begin{proof}  We start with explicit instances of cases (i) and (ii).   For (i), there is no loss in assuming that $[a,b]=[0,1]$ and
we consider further $\mu^{(i)}(r) = (1-r)^{\alpha-1} 1_{[0,1]}$.  For case (ii), consider a simple polynomial term $\mu^{(ii)}(r) = r^p e^{-r^{\alpha}}$.
Then we have, 
$$
\ln {m}_\xi^{(i)} 
   =  \ln\Gamma(\xi+1)  -  \ln \Gamma(\xi+\alpha+1) + \ln  \Gamma(\alpha),
$$
and 
$$
  \ln m_\xi^{(ii)} = \ln \Gamma((\xi +  p+1)/\alpha)   - \ln \alpha.
$$ 
From this point the verifications may be completed by use of the appraisal
$\frac{d^2}{dz^2} \ln \Gamma(z)  $$=$$  {z^{-1}} + (1/2) z^{-2} + O( z^{-3}),$
valid for large real values of $z$.

More generally,  for case (i) we write
$$
     (\ln m_{\xi})'' = \frac{ \langle (\ln r)^2 r^{\xi} \rangle_{\mu}}{  \langle  r^{\xi} \rangle_{\mu}   }  - \frac{ \langle (\ln r) r^{\xi} \rangle_{\mu}^2}{  \langle  r^{\xi} \rangle_{\mu}^2   }
$$
and note that Laplace asymptotic considerations yield: for $d=0,1,2$,  
$ \langle (\log r)^d r^{\xi} \rangle_{\mu} =  \langle (\log r)^d r^{\xi}  \rangle_{\mu^{(i)}} $ $+$ $ O(e^{-C_\delta \xi}),$
which is more than enough to show that one has the same asymptotics for any such $\mu$ as for ${\mu}^{(i)}$.
That (ii) extends to more general polynomials $p(r)$ is self-evident.

For case (iii) we only mention that it is most convenient to consider the asymptotically equivalent object
$  {m}_{\xi} = \int_0^{\infty}  e^{\xi r - c r^q} dr $ (after an obvious change of variable) for which the 
leading order arises from a neighborhood of the stationary point  $r^* = (\xi/c q)^{ \frac{1}{q-1}}$. 
The details are
straightforward.
\end{proof}

The above is intended to be illustrative;  no attempt to optimize the regularity conditions 
on $\mu$ has been made. We also mention here without proof that the measure  $d\mu(r) = e^{-e^r} dr$
produces a moment sequence for which there is the not strictly polynomial decay 
$ ( \ln m_{\xi})'' =  O( \frac{1}{ \xi \ln \xi})$.  Further, by Fourier inversion, 
one may produce measures for which $(\log m_{\xi})''$ is exactly $\alpha(1+\xi)^{-\beta}$ for $0< \beta \le 2, \beta \neq 1$, $\alpha>0$.

\paragraph{Moment condition and the mean measure}

Our  condition(s) on the moment sequence also dictate the limit shape of the 
 mean measure of the points.    This object is
given by
$$
   d {\Lambda}_n (z)  =   \left( \frac{1}{n} \sum_{k=1}^{n-1} \frac{|z|^{2k}}{2 \pi  m_{2k} }  \right)     d^2z,  
$$
where  $d^2z$ denotes Lebesgue measure on $\C$, and as the name suggests $ \E_{\m,n} [ \# \mbox{ points in } A ] = 
n \int_A d \Lambda_n(z)$ for (measurable) $A \subseteq \C$, see again \cite{BKPV}.  We provide one description of  
the shift from a ``$\beta=1$" setting, resulting in an extended limit support, to a ``$\beta>1"$ setting for which the limit support is
degenerate.  This is in line with the conjecture discussed after Theorem \ref{thm2}.

\begin{proposition} 
\label{prop1}
For all sufficiently large $\xi$ let the moment sequence $ m_\xi = \int_0^{\infty} r^{\xi}  d\mu(r)$ satisfy
\be
\label{momentcond}  
   ( \ln m_\xi )^{\prime \prime} = \frac{\alpha}{\xi+1} +   \eps(\xi)
\ee
with  $\alpha \ge 0$ and $\eps \in L^1(\R_+)$.
Then there exists
a rescaling of $\PP_{\m,n}$  so that $d \Lambda_n$ 
converges weakly to either: a weighted circular law with 
density $ \frac{1}{2 \pi \alpha} |z|^{\frac{1}{\alpha} -2}$ on $|z| \le 1$ when $\alpha>0$, 
or  to the uniform measure on $|z|=1$ when $\alpha=0$.
\end{proposition}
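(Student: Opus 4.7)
The plan is to use rotation invariance to reduce the weak convergence of $\Lambda_n$ to a statement about its radial marginal and then recognize this marginal as a Riemann sum over highly concentrated probability measures whose support points trace the claimed density. Setting $M(\xi)=\ln m_\xi$ and integrating the hypothesis (\ref{momentcond}), the $L^1$ assumption on $\eps$ gives
\[
M'(\xi)=\alpha\ln(\xi+1)+A+o(1),\qquad \xi\to\iy,
\]
for some constant $A$ (the $o(1)$ term being $-\int_\xi^\iy\eps(s)\,ds$). The mean value theorem then yields $M(2k+2)-M(2k)=2\alpha\ln(2k+1)+2A+o(1)$, so $r_k:=(m_{2k+2}/m_{2k})^{1/2}=e^A(2k+1)^\alpha(1+o(1))$. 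With the choice $c_n=e^A(2n)^\alpha$ for $\alpha>0$ and $c_n=e^A$ for $\alpha=0$, one has $r_k/c_n\to t^\alpha$ uniformly in $k/n\to t\in(0,1]$.

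Next I would establish a Laplace-type concentration of the probability measure $d\nu_k(r):=(r^{2k}/m_{2k})\,d\mu(r)$ around $r_k$. Note $\E_{\nu_k}[r^2]=r_k^2$, while
\[
\log\E_{\nu_k}\bigl[r^4/r_k^4\bigr]=M(2k+4)-2M(2k+2)+M(2k)=\int_{-2}^{2}(2-|u|)\,M''(2k+2+u)\,du.
\]
Substituting $M''=\alpha/(\xi+1)+\eps$, the first part is $O(1/k)$ and the second is bounded by $2\int_{2k}^{2k+4}|\eps|$, which vanishes as $k\to\iy$. Hence $\E_{\nu_k}[r^4/r_k^4]\to 1$, i.e. $\mathrm{Var}_{\nu_k}(r^2/r_k^2)\to 0$, and Chebyshev produces $\nu_k(|r/r_k-1|>\eta)\to 0$ for every fixed $\eta>0$.

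By rotation invariance of both $\Lambda_n$ and the candidate limit it suffices to test the rescaled measure $\Lambda_n\circ c_n^{-1}$ against radial $\phi\in C_b(\C)$. Write
\[
\int_\C \phi(z/c_n)\,d\Lambda_n(z)=\frac{1}{n}\sum_{k=0}^{n-1}\int_0^\iy \phi(r/c_n)\,d\nu_k(r),
\]
truncate at a large fixed $K$ (contributing $O(K/n)\to 0$), and for $k\ge K$ use the concentration of $\nu_k$ to replace the inner integral by $\phi(r_k/c_n)$ up to an error controlled by the modulus of continuity of $\phi$ at $\eta\,r_k/c_n$ plus $2\|\phi\|_\iy\cdot\nu_k(|r/r_k-1|>\eta)$. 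What remains is the Riemann sum $\frac{1}{n}\sum_{k=K}^{n-1}\phi(r_k/c_n)$, which by the first paragraph converges to $\int_0^1\phi(t^\alpha)\,dt$ when $\alpha>0$ and to $\phi(1)$ when $\alpha=0$. The substitution $s=t^\alpha$ rewrites the first integral as $\int_0^1\phi(s)\,\alpha^{-1}s^{1/\alpha-1}\,ds$, exactly the radial marginal in polar coordinates of the Lebesgue density $(2\pi\alpha)^{-1}|z|^{1/\alpha-2}\mathbf{1}_{|z|\le 1}$; the $\alpha=0$ limit $\phi(1)$ identifies the uniform measure on $\T$. Tightness is automatic from the uniform bound $\int |z|^2\,d(\Lambda_n\circ c_n^{-1})=\frac{1}{n}\sum_k (r_k/c_n)^2\le C$.

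The principal obstacle is making the various estimates uniform in $k$—in particular the rate in $r_k/c_n\to(k/n)^\alpha$ and the variance-decay so that the truncation/concentration replacement really produces a convergent Riemann sum. These uniform rates reduce to elementary manipulations of the $L^1$ tail $\int_\xi^\iy|\eps|\to 0$, but the bookkeeping is fiddly and the $\alpha=0$ case must be handled in parallel since all mass then collapses onto a single circle.
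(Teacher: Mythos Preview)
Your argument is correct and takes a genuinely different route from the paper. The paper proceeds by the method of moments: it integrates the hypothesis twice to obtain $\ln(m_{k+\ell}/m_k)=\alpha\ell\log k+c\ell+o(1)$, then computes the $\ell$-th absolute moment $\int_{\C}|z|^{\ell}\,d\Lambda_n=\frac{1}{n}\sum_k m_{k+\ell}/m_k$ directly and matches the limiting moment sequence (after the rescaling $z\mapsto n^{-\alpha}z$) with that of the claimed density, invoking moment determinacy of the compactly supported limit.

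Your approach instead decomposes $\Lambda_n$ as the average of the probability measures $\nu_k$, localizes each $\nu_k$ near $r_k=(m_{2k+2}/m_{2k})^{1/2}$ by a second-moment (variance) bound, and then passes to the Riemann sum $\frac{1}{n}\sum_k\phi(r_k/c_n)$ for radial test functions. This is more geometric --- it explicitly exhibits where the mass sits and lets the scale $c_n=e^A(2n)^{\alpha}$ emerge from the peak locations --- and it sidesteps any appeal to moment determinacy. The paper's computation is shorter and purely algebraic: once the moment ratio asymptotic is in hand, each moment of the rescaled $\Lambda_n$ is a one-line Riemann sum, and no concentration estimate for the individual $\nu_k$ is needed. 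Your uniformity bookkeeping (variance decay uniform in $k\ge K$, then $\eta\downarrow 0$ after $K\uparrow\infty$) is indeed routine and goes through exactly as you sketch; reducing to $\phi\in C_c(\C)$ via the tightness bound handles the modulus-of-continuity step cleanly.
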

 
Note,  $\eps$ is necessarily nonnegative when $\alpha =0$.  And of course, when $\alpha = 1/2$ the advertised limit 
is the standard circular law (see e.g. \cite{Bai}).

\begin{proof} Choose $q \gg 1$ so that  (\ref{momentcond}) is in effect for $s \ge q$, and then integrate the equality twice:
first over $q \le s \le t$,  and then in $t$ from $k$ to $k+\ell$ to find
\bq
 \ln \left( \frac{m_{k+\ell}}{m_k}  \right)  & = & 
                                                                    \alpha \ell \log k   + c \ell + o(1).    \label{moment2}
\eq
(Here $c =    (\ln m)^{\prime}(q) + \alpha  \ln m(q) - \int_q^{\infty} \varepsilon(s) ds$, and the $o(1)$ holds in $k$  $-$ we
view $\ell$ as fixed).
Next compute the $\ell^{th}$ absolute moment in the mean measure:
\be
  \int_{\C} |z|^{\ell} d \Lambda_n(z) =  \frac{1}{n} \sum_{k=0}^{n-1}  \frac{m_{k+\ell}}{m_k} =
   \frac{1}{n} e^{c \ell}  \sum_{k=1}^{n-1} k^{\alpha \ell}(1+o(1)).
\label{meanmoments}
\ee
Neglecting the multiplicative errors, in the case $\alpha=0$  the sum (\ref{meanmoments}) converges  to $e^{c\ell}$ 
for any $\ell$, unambiguously the moment sequence defined by placing unit mass at the place $e^c \in \R_+$.
When $\alpha > 0$,  we rescale 
$\PP_{\m,n}$   by sending $\{ z_i\}_{ 1 \le i \le n} \mapsto \{ n^{-\alpha } z_i\}_{ 1 \le i \le n} $.
 Then, the sum becomes 
$     
       e^{c \ell}  \frac{1}{n} \sum_{k=1}^{n-1}  (k/n)^{\alpha \ell}   
       \rightarrow  \frac{ e^{c \ell} }{ \alpha \ell + 1}
$
as $n \rightarrow \infty$.  Matching constants in
$
     \int_0^b t^{\ell}  \, d  (t/b)^{p+1}   = \frac{p+1}{p+\ell +1} b^{\ell}
$
identifies (uniquely) the scaled $\alpha > 0$ moment sequence with that of the measure with density $f(t) = (p+1) t^p/ b^{p+1}$ on $[0, b]$
where $b = e^c$ and $p = \frac{1-\alpha}{\alpha}$.    Thus  the limit mean measure  (or actually its radial
projection) is also identified.
In either case, $\alpha > 0$ or $\alpha =0$, an additional rescaling will pull the edge of the support from $e^c$ to $1$.
\end{proof}


\section{Hilbert-Schmidt and trace class conditions}
\label{s3}

Our results  hinge on being able to consider $M_\mu(a)$ as a suitable compact perturbation of the Toeplitz operator $T(a)$ 
(see (\ref{def.MT})).  Here we will establish sufficient conditions on $a$ and $\mu$ such that  
$$
K_{\mu}(a) =M_\mu(a)-T(a) = \left( (\rho_{j,k}-1)a_{j-k})\right),  \qquad j,k\ge 0,
$$
is Hilbert-Schmidt or trace class operator. We refer to \cite{GoKr} for general information about these notions. 
Since $T(a)$ is bounded on $\ell^2$ whenever $a\in L^\iy(\T)$, under the appropriate conditions $M_{\mu}(a)$ is then also bounded.
While it might be interesting to ask for necessary and sufficient conditions for the boundedness of $M_{\mu}(a)$
and the compactness of $K_\mu(a)$, we think it is a non-trivial issue, which we will not pursue here.

The compactness properties of  $K_{\mu}(a)$ rely mainly on the ``shape'' of $\rho_{j,k}$ near the diagonal. 
An application of H\"older's inequality shows that  $0 < \rho_{j,k} \le 1$.  
More detailed information on $\rho_{j,k}$ is provided by the following technical lemma, for which we use 
the set of indices,
\be\label{I.delta}
\mathcal{I}_\delta =
\Big\{\, ( j,k)\in \Z_+\times\Z_+ : \, |j-k|^\delta < (j+k)/2\,\Big\},
\ee
always assuming $\delta\ge 1$ ($\Z_+=\{0,1,\dots\}$). The factor $1/2$ in $\cI_\delta$ is only for technical convenience.  In particular, $(j,k)\in\cI_\delta$ implies $j,k\ge1$.

Part (a) of the  lemma will be used at several places, while the more elaborate part (b) is used only in Lemma \ref{l5.2}.  Part (b) obviously implies part (a), but it seems more clarifying to state and prove (a) separately.  Throughout what follows we will utilize the notation $a\vee b:=\max\{a,b\}$.

\begin{lemma}\label{l3.1}\
\begin{itemize}
\item[(a)]
Let $\beta>0$, $\delta\ge1$, $\beta\delta\ge 2$, and assume that the measure $\mu$ satisfies the condition
$$
(\ln m_\xi)''  = O(\xi^{-\beta}),\qquad \xi\to\iy.
$$
Then,  for $(j,k)\in\cI_\delta$ with $\Delta=j-k, \sigma=j+k$, we have the uniform estimate
\bq\label{rho-asym-1}
\rho_{j,k} &=& 1+ O\left(\frac{\Delta^2}{\sigma^{\beta}}\right).
\eq

\item[(b)]
Let $\beta,\gamma,\rho>0$, $\delta\ge1$, $\beta\delta\ge 2$,  $\gamma\delta \ge 3$, $\rho\delta\ge2$, and
assume that there exists a differentiable function $h_\mu(\xi)\ge0$ such that
$$
(\ln m_\xi)''  =h_\mu(\xi)+O(\xi^{-\rho}),\qquad \xi\to\iy,
$$
and
$$
h_\mu(\xi) = O(\xi^{-\beta}),\qquad 
h'_\mu(\xi) = O(\xi^{-\gamma}), \qquad \xi\to\iy.
$$
Then, for $(j,k)\in\cI_\delta$ with $\Delta=j-k, \sigma=j+k$, we have the uniform estimate
\bq\label{rho-asym-2}
 \rho_{j,k} &=& 
1-\frac{\Delta^2}{2} h_\mu(\sigma)
+O\left(
\frac{\Delta^4}{\sigma^{2\beta}}\vee
\frac{|\Delta|^3}{\sigma^\gamma}\vee
\frac{\Delta^2}{\sigma^\rho}\right).
\eq
\end{itemize}
\end{lemma}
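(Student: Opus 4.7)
The natural starting point is to rewrite $\rho_{j,k}$ in logarithmic form. Setting $L(\xi):=\ln m_\xi$, $\sigma:=j+k$, $\Delta:=j-k$, we have
\[
 \ln \rho_{j,k} \;=\; L(\sigma)\;-\;\tfrac{1}{2}L(\sigma+\Delta)\;-\;\tfrac{1}{2}L(\sigma-\Delta),
\]
and the integral form of Taylor's remainder (applied symmetrically to $\pm\Delta$) gives
\[
 -\ln \rho_{j,k} \;=\; \tfrac{1}{2}\int_0^{|\Delta|}\bigl(|\Delta|-t\bigr)\bigl[L''(\sigma+t)+L''(\sigma-t)\bigr]\,dt.
\]
This is the exact formula I would work with; everything else is estimation.

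Before inserting the hypotheses, the key geometric remark is that on $\cI_\delta$ we have $|\Delta|^\delta<\sigma/2$, so $|\Delta|\le \sigma/2$ (for $\sigma\ge 2$), hence $\sigma\pm t \asymp \sigma$ uniformly for $t\in[0,|\Delta|]$. This is what allows one to substitute pointwise bounds on $L''$ (and later on $h_\mu$, $h'_\mu$, and the $O(\xi^{-\rho})$ remainder) evaluated at $\sigma\pm t$ by the same bounds evaluated at $\sigma$.

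For part (a): using $L''(\xi)=O(\xi^{-\beta})$ directly in the integral representation yields
\[
 -\ln\rho_{j,k}\;=\;O\!\left(\int_0^{|\Delta|}(|\Delta|-t)\,\sigma^{-\beta}\,dt\right)\;=\;O\!\left(\frac{\Delta^2}{\sigma^{\beta}}\right).
\]
The condition $\beta\delta\ge 2$ ensures this is bounded uniformly on $\cI_\delta$ (since $\Delta^2/\sigma^\beta\le (\sigma/2)^{2/\delta-\beta}$), so exponentiating via $e^x=1+O(x)$ for bounded $x$ gives (\ref{rho-asym-1}).

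For part (b): I would split $L''(\xi)=h_\mu(\xi)+\eps(\xi)$ with $\eps(\xi)=O(\xi^{-\rho})$ and treat the two pieces separately. The $\eps$-contribution is handled as in (a), producing the error term $O(\Delta^2/\sigma^\rho)$ (here $\rho\delta\ge 2$ keeps things bounded). For the $h_\mu$ piece, I would expand $h_\mu$ around $\sigma$ via
\[
 h_\mu(\sigma\pm t) \;=\; h_\mu(\sigma) \pm \int_0^t h'_\mu(\sigma\pm s)\,ds,
\]
so that the symmetric sum $h_\mu(\sigma+t)+h_\mu(\sigma-t)$ equals $2h_\mu(\sigma)+O(t\,\sigma^{-\gamma})$ uniformly on $\cI_\delta$ (again by the diagonal estimate and $\gamma\delta\ge 3$). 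Inserting this in the integral representation produces the leading term $\frac{\Delta^2}{2}h_\mu(\sigma)$ together with an error $O(|\Delta|^3/\sigma^\gamma)$. Finally, when exponentiating $-\ln\rho_{j,k}$, the quadratic correction from $e^x=1+x+O(x^2)$ contributes $O((\Delta^2 h_\mu(\sigma))^2)=O(\Delta^4/\sigma^{2\beta})$, giving the third error term in (\ref{rho-asym-2}).

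The only place that requires genuine care is checking that all three error terms, as well as the leading correction $\Delta^2 h_\mu(\sigma)$, are uniformly bounded on $\cI_\delta$ so that the linearization of the exponential is legitimate; this is precisely what the numerical conditions $\beta\delta\ge 2,\ \gamma\delta\ge 3,\ \rho\delta\ge 2$ are designed to guarantee. Beyond this bookkeeping the argument is essentially a second-order Taylor expansion of $L$ combined with a first-order expansion of $h_\mu$, and no further ingredient is needed.
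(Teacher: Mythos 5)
Your proposal is correct and follows essentially the same route as the paper. The only cosmetic difference is that you express the second-order Taylor remainder of $\ln m_\xi$ in integral form, whereas the paper applies the mean-value theorem twice to get a single intermediate point $\eta$; both lead to the same estimates once the diagonal condition $|\Delta|^\delta<\sigma/2$ is used to replace $\sigma\pm t$ by $\sigma$, and you also correctly identify (what the paper leaves implicit) that the $\Delta^4/\sigma^{2\beta}$ error in (\ref{rho-asym-2}) arises from the quadratic term of the exponential.
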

\begin{proof}
We can  assume without loss of generality that $\Delta>0$. Then 
$$
\ln \rho_{j,k} = \ln m_{\sigma}-\frac{\ln m_{\sigma+\De}+\ln m_{\sigma-\De}}{2}  
=- \frac{\De^2}{2} (\ln m_\eta)'', 
 \qquad \eta\in (\sigma-\De,\sigma+\De), \nn 
$$
after applying the mean-value theorem twice.
We can write $\eta=\sigma(1+\tau)$, where the error term $\tau$ is estimated by $|\tau|\le |\De|/\sigma\le |\Delta|^\delta/\sigma\le 1/2$ using $\delta\ge1$.

In case (a) we can conclude that
$$
\ln \rho_{j,k}=O\left(\frac{\Delta^2}{\eta^\beta}\right)=O\left(\frac{\Delta^2}{\sigma^\beta}\right).
$$
Because $\beta\delta\ge 2$ we get $\Delta^2\le \sigma^{2/\delta}\le \sigma^\beta$.
Hence the above term is bounded and exponentiating yields the assertion.
In case (b) we first obtain
$$
\ln \rho_{j,k}=-\frac{\Delta^2}{2}h_\mu(\eta)+O\left(\frac{\Delta^2}{\eta^\rho}\right).
$$
Now we apply once more the mean value theorem to obtain the estimate
$$
\ln \rho_{j,k}=-\frac{\Delta^2}{2} h_\mu(\sigma) +O\left(\frac{|\Delta|^3}{\sigma^\gamma}\vee
\frac{\Delta^2}{\sigma^\rho}\right).
$$
Notice that, as above, $\eta=\sigma(1+\tau)$ with $|\tau|\le 1/2$. 
All these terms are bounded because $2/\delta\le \beta$, $3/\delta\le\gamma$, and $2/\delta\le \rho$. 
The assertion is obtained upon exponentiating.
\end{proof}


Part (a) of the lemma translates immediately into the estimates that follow. 

\begin{proposition}\label{p3.2}
Let $\beta>1/2$ and assume that the measure $\mu$ satisfies the assumption
$$
(\ln m_\xi)''=O(\xi^{-\beta}),\qquad \xi\to\iy.
$$
Put $\sigma=1/2\vee 1/\beta$. Then there exists a constant $C_\mu>0$ such that 
$K_\mu(a)$ is Hilbert-Schmidt and the estimate
$$
\| K_\mu(a)\|_{\cC_2(\ell^2)} \le C_{\mu} \|a\|_{F\ell^2_\sigma}
$$
holds whenever $a\in F\ell^2_\sigma$.
\end{proposition}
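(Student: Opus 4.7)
The plan is to compute the Hilbert--Schmidt norm squared by reorganizing the sum over pairs $(j,k)$ into sums over the difference $\Delta=j-k$, and then estimate each slice using Lemma~\ref{l3.1}(a) away from the diagonal together with the trivial bound $0<\rho_{j,k}\le 1$ near the diagonal. Concretely,
$$
\|K_\mu(a)\|_{\cC_2(\ell^2)}^2=\sum_{j,k\ge 0}(\rho_{j,k}-1)^2|a_{j-k}|^2=\sum_{\Delta\in\Z}|a_\Delta|^2\, N_\Delta,\qquad N_\Delta:=\sum_{\substack{j,k\ge 0\\ j-k=\Delta}}(\rho_{j,k}-1)^2,
$$
so the proof reduces to the weighted pointwise bound $N_\Delta\le C_\mu(1+|\Delta|)^{2\sigma}$, which would yield directly $\|K_\mu(a)\|_{\cC_2}^2\le C_\mu\|a\|_{F\ell^2_\sigma}^2$.

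To estimate $N_\Delta$, I fix $\delta:=1\vee(2/\beta)$, so that $\delta\ge 1$ and $\beta\delta\ge 2$, and split the pairs $(j,k)$ with $j-k=\Delta$ into those lying in $\cI_\delta$ and those in its complement. For $(j,k)\in\cI_\delta$, Lemma~\ref{l3.1}(a) gives $(\rho_{j,k}-1)^2=O(\Delta^4/\sigma_{j,k}^{2\beta})$ where $\sigma_{j,k}=j+k$, and since $\beta>1/2$ the series $\sum_{\sigma>2|\Delta|^\delta}\sigma^{-2\beta}$ converges and is $O(|\Delta|^{\delta(1-2\beta)})$. Thus the $\cI_\delta$-contribution to $N_\Delta$ is $O(|\Delta|^{4+\delta(1-2\beta)})$. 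For $(j,k)\notin\cI_\delta$, we have $j+k\le 2|\Delta|^\delta$ and use $(\rho_{j,k}-1)^2\le 1$; the number of such pairs with $j-k=\Delta$ is $O(|\Delta|^\delta)$.

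It remains to check that both exponents are dominated by $2\sigma$. With the choice $\sigma=1/2\vee 1/\beta$ and $\delta=1\vee 2/\beta$, one verifies case by case: if $1/2<\beta\le 2$, then $\delta=2/\beta$, $\sigma=1/\beta$, and $4+\delta(1-2\beta)=2/\beta=2\sigma$ while $\delta\le 2\sigma$; if $\beta\ge 2$, then $\delta=1$, $\sigma=1/2$, and $4+\delta(1-2\beta)=5-2\beta\le 1=2\sigma$ while $\delta=1=2\sigma$. Hence $N_\Delta=O((1+|\Delta|)^{2\sigma})$ in all cases, with a constant depending only on $\mu$ through the implicit constant in Lemma~\ref{l3.1}(a) and the tail constant of $\sum\sigma^{-2\beta}$.

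The main (modest) obstacle is simply the exponent bookkeeping at the transition $\beta=2$, and ensuring that $\beta>1/2$ is exactly what is needed to make the tail sum $\sum_{\sigma\ge A}\sigma^{-2\beta}$ converge; this is also the point at which the method breaks at $\beta=1/2$, as noted in the introduction. Once the two-regime estimate on $N_\Delta$ is in hand, the Fubini-type resummation above concludes the proof.
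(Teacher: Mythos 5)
Your proof is correct and follows essentially the same route as the paper's: both set $\delta = 1\vee 2/\beta = 2\sigma$, split the Hilbert--Schmidt sum across $\cI_\delta$ and its complement, apply Lemma~\ref{l3.1}(a) with the trivial bound $0<\rho_{j,k}\le 1$, and reduce to the exponent check $4+\delta(1-2\beta)\le\delta$ (equivalently $\delta\beta\ge 2$). The only cosmetic difference is that you package the inner sum over $j+k$ as a pointwise bound on $N_\Delta$ rather than treating the two pieces as double sums over $(d,s)$, which is the same bookkeeping.
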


\begin{proof}
Put $\delta=2\sigma=1\vee 2/\beta$ so that Lemma \ref{l3.1}(a) is applicable.
The operator $K_\mu(a)$ is Hilbert-Schmidt if and only if the sum 
$
\sum_{(j,k)\in\Z_+^2} |a_{j-k}|^2 (1-\rho_{j,k})^2
$
is finite (this quantity is the square of the Hilbert-Schmidt norm). 
We have that
\bq 
\sum_{(j,k)\in\Z_+^2} |a_{j-k}|^2 (1-\rho_{j,k})^2 & \le &
\sum_{(j,k)\notin\cI_\delta} |a_{j-k}|^2+\sum_{(j,k)\in\cI_\delta} |a_{j-k}|^2(1-\rho_{j,k})^2 \nn \\
& \le & \sum_{(d,s) \in \Z\times\Z_+ \atop |d|^\delta\ge  s/2}|a_d|^2 +
\sum_{(d,s) \in \Z\times\Z_+ \atop |d|^\delta <  s/2}|a_d|^2\frac{d^4}{s^{2\beta}}  \nn \\
& \le & C \sum_{d\in \Z} |a_d|^2 |d|^{\delta} + C
\sum_{d\in \Z} |a_d|^2 |d|^{4+\delta(1-2\beta)}. \nn
\eq
Line one just uses $\rho_{j,k}\in(0,1]$. In line two we make the substitution $d=j-k$, $s=j+k$
and employ Lemma \ref{l3.1}(a), and the final line uses the fact $\beta > 1/2$.
Furthermore, as $\delta\beta\ge2$ we see that the second term in this last line does not exceed the first one, 
and that in turn is equal to the square of $\|a\|_{F\ell_{\sigma}^2}$ ($\delta=2\sigma$).
\end{proof}

Next we  establish two sufficient conditions for $K_\mu(a)$ to be trace class.
It is not hard to show 
that one is not weaker than the other, {\em i.e.}, neither of the  two function classes 
pointed out below is  contained in the other.

\begin{proposition}\label{p3.3}
Let $\beta>1$ and assume that the measure $\mu$ satisfies the assumption
$$
(\ln m_\xi)''=O(\xi^{-\beta}),\qquad \xi\to\iy.
$$
Put $\sigma=1\vee 2/\beta$. Then there exists $C_\mu>0$ and, for each $\eps>0$, $C_{\mu,\eps}>0$
such that 
\begin{itemize}
\item[(a)] 
$K_\mu(a)$ is trace class and the estimate
$$
\| K_\mu(a)\|_{\cC_1(\ell^2)} \le C_{\mu} \|a\|_{F\ell^1_\sigma}
$$
holds whenever $a\in F\ell^1_\sigma$;
\item[(b)]
$K_\mu(a)$ is trace class and the estimate
$$
\| K_\mu(a)\|_{\cC_1(\ell^2)} \le C_{\mu,\eps} \|a\|_{F\ell^2_{\sigma+\eps}}
$$
holds whenever $a\in F\ell^2_{\sigma+\eps}$.
\end{itemize}
\end{proposition}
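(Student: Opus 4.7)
My plan is to prove (a) directly from a diagonal decomposition, and (b) from a dyadic refinement combined with Proposition~\ref{p3.2}. Writing $K_\mu(a)=\sum_d a_d T_d$, where for $d\ge0$ we have $T_d=S^d D_d$ with $S$ the unilateral shift on $\ell^2(\Z_+)$ and $D_d$ the diagonal operator with entries $\rho_{k+d,k}-1$ (and analogously for $d<0$), one has $\|T_d\|_{\cC_1}=\sum_k|\rho_{k+|d|,k}-1|$ since $S^d$ is a partial isometry. Applying Lemma~\ref{l3.1}(a) with $\delta=\sigma=1\vee 2/\beta$, I would split the sum at $(k+|d|,k)\in\cI_\delta$: the complement contributes at most $\lesssim|d|^\delta$ terms of modulus at most one, and on $\cI_\delta$ we have $|\rho_{k+|d|,k}-1|\lesssim|d|^2/(2k+|d|)^\beta$, whose sum over $k\gtrsim|d|^\delta$ is $\lesssim|d|^{2+\delta(1-\beta)}$. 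The choice $\delta=\sigma$ balances the two (via $\delta\beta=2$ when $1<\beta\le 2$, and trivially via $\delta=1$ and $3-\beta\le 1$ when $\beta\ge 2$), giving $\|T_d\|_{\cC_1}\le C_\mu(1+|d|)^\sigma$; part (a) follows from the triangle inequality.

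Part (a) combined with Cauchy--Schwarz yields (b) only for $\eps>1/2$, since $\sum_d|a_d|(1+|d|)^\sigma\le\|a\|_{F\ell^2_{\sigma+\eps}}\bigl(\sum_d(1+|d|)^{-2\eps}\bigr)^{1/2}$. To cover arbitrary $\eps>0$, I would perform a dyadic Littlewood--Paley decomposition $a=\sum_{n\ge 0}a^{(n)}$ with the Fourier support of $a^{(n)}$ in $\{|d|\sim 2^n\}$, and aim for the sharper per-block bound
\[
  \|K_\mu(a^{(n)})\|_{\cC_1}\le C\, 2^{n\sigma}\|a^{(n)}\|_{\ell^2}.
\]
Given this, summing over $n$ with a Cauchy--Schwarz factor $2^{-n\eps}$ produces the desired conclusion, since $\sum_n 2^{-2n\eps}<\iy$ for every $\eps>0$ and $\sum_n 2^{2n(\sigma+\eps)}\|a^{(n)}\|_{\ell^2}^2$ is comparable to $\|a\|_{F\ell^2_{\sigma+\eps}}^2$.

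The per-block estimate is the main obstacle, since applying (a) inside a single block already costs a further factor $2^{n/2}$. My plan is to split $K_\mu(a^{(n)})=K^{\mathrm{main}}+K^{\mathrm{tail}}$ by cutting at $j+k\lesssim 2^{n\delta}$. The main part lives in a finite block of size $\sim 2^{n\delta}\times 2^{n\delta}$, hence has rank $\lesssim 2^{n\delta}$, and Cauchy--Schwarz on singular values gives $\|K^{\mathrm{main}}\|_{\cC_1}\le\sqrt{\mathrm{rank}}\,\|K^{\mathrm{main}}\|_{\cC_2}\lesssim 2^{n\delta/2}\|K_\mu(a^{(n)})\|_{\cC_2}$. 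Proposition~\ref{p3.2} applied to $a^{(n)}$ yields $\|K_\mu(a^{(n)})\|_{\cC_2}\lesssim 2^{n\sigma/2}\|a^{(n)}\|_{\ell^2}$, and with $\delta=\sigma$ this produces exactly $2^{n\sigma}\|a^{(n)}\|_{\ell^2}$. For the tail I would decompose further dyadically in $j+k$, writing $K^{\mathrm{tail}}=\sum_{m>n\delta}K^{(m)}$ with $K^{(m)}$ supported on $j+k\sim 2^m$ and thus of rank $\lesssim 2^m$. In this regime every contributing pair lies in $\cI_\delta$, and Lemma~\ref{l3.1}(a) gives $\|K^{(m)}\|_{\cC_2}^2\lesssim 2^{4n+m(1-2\beta)}\|a^{(n)}\|_{\ell^2}^2$, hence $\|K^{(m)}\|_{\cC_1}\lesssim 2^{2n+m(1-\beta)}\|a^{(n)}\|_{\ell^2}$. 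Since $\beta>1$ the geometric series in $m\ge n\delta$ is dominated by its initial term, producing $\|K^{\mathrm{tail}}\|_{\cC_1}\lesssim 2^{2n+n\delta(1-\beta)}\|a^{(n)}\|_{\ell^2}=2^{n\sigma}\|a^{(n)}\|_{\ell^2}$ by the same optimization of $\delta$, which closes the per-block bound.
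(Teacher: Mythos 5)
Your part (a) is essentially identical to the paper's proof: both estimate the trace norm of each diagonal $K_\mu(t^m)$ by splitting the $k$-sum at the boundary of $\cI_\delta$, invoke Lemma~\ref{l3.1}(a), choose $\delta=\sigma$ so the two contributions balance (giving $O(|m|^\sigma)$), and conclude by the triangle inequality.

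Your part (b), however, is a genuinely different argument, and it is correct. The paper's route is short: it introduces the Hilbert--Schmidt diagonal weight $\Lambda=\diag\bigl((1+k)^{-1/2-\eps}\bigr)$, observes that $K_\mu(a)=\bigl(K_\mu(a)\Lambda\iv\bigr)\Lambda$ is a product of two Hilbert--Schmidt operators, and directly estimates $\|K_\mu(a)\Lambda\iv\|_{\cC_2}^2$ by the same two-region split used in Proposition~\ref{p3.2}, the extra factor $(1+k)^{1+2\eps}$ being absorbed by the $\eps$ of extra regularity. You instead prove the per-block bound $\|K_\mu(a^{(n)})\|_{\cC_1}\lesssim 2^{n\sigma}\|a^{(n)}\|_{\ell^2}$ via a dyadic cut in $j+k$: on the low-frequency square the rank bound $\mathrm{rank}\lesssim 2^{n\sigma}$ combined with the Hilbert--Schmidt estimate of Proposition~\ref{p3.2} gives $\cC_1\lesssim\sqrt{\mathrm{rank}}\cdot\cC_2$, while on the tail a further dyadic decomposition in $j+k$ and Lemma~\ref{l3.1}(a) yield a geometric series that closes (using $\beta>1$). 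Then Cauchy--Schwarz across the frequency blocks produces the $F\ell^2_{\sigma+\eps}$ bound. I checked the exponent bookkeeping and it works out consistently with the convention $\|a\|_{F\ell^2_\tau}^2\sim\sum_d|a_d|^2(1+|d|)^{2\tau}$, which is what the paper actually uses in its proofs (Proposition~\ref{p3.2} equates $\sum|a_d|^2|d|^\delta$ with $\|a\|_{F\ell^2_{\delta/2}}^2$); be aware that this conflicts with the literal reading of the weight definition in~(\ref{Fl}), so it is worth stating the convention explicitly. The trade-off between the two proofs: the paper's is two lines of algebra plus one direct computation, so clearly more economical here; your per-block estimate is stronger information (a $\cC_1\to\ell^2$ bound uniformly over dyadic blocks) and would, for instance, give membership in intermediate Schatten ideals $\cC_p$ ($1<p<2$) by real interpolation, which the paper's factorization trick does not produce directly.
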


\begin{proof}  
Here we put $\delta=\sigma=1\vee 2/\beta$ and notice that then Lemma \ref{l3.1}(a) is again applicable.

(a):\ 
We first estimate the trace norm of $K_\mu(t^m)$, $m\in\Z$. Without loss of generality assume
$m>0$. Then $K_\mu(t^m)$ has entries on the $m$-th diagonal given by
$\{\rho_{k+m,k}-1\}_{k=0}^\infty$. This operator is trace class if and only if its trace norm 
$$
\sum_{k=0}^\iy |\rho_{k+m,k}-1|<\iy.
$$
We split and overestimate this sum by a constant times
$$
\sum_{(k+m,k)\notin \cI_\delta} 1 +\sum_{(k+m,k)\in \cI_{\delta}} \frac{m^2}{(2k+m)^\beta},
$$
using Lemma \ref{l3.1}(a) for the second part.
Now $(k+m,k)\in \cI_\delta$ means that $m^\delta < (2k+m)/2$, {\em i.e.}, $2k>2m^\delta-m$.
Noting that $2k\le 2m^\delta-m$ implies $k< m^\delta$, and $2k>2m^\delta-m$ implies $2k>m^\delta$,  the previous terms are overestimated by
$$
\sum_{0\le k < m^{\delta}} 1+\sum_{k\ge m^{\delta}/2} \frac{m^2}{(2k)^\beta}
\le m^\delta+C m^{2+\delta(1-\beta)}\le (1+C) m^{\delta}.
$$
Here we used $\beta>1$ and $\delta\beta\ge 2$, and all estimates are uniform in $m$. Thus $\|K_\mu(t^m)\|_{\cC_1(\ell^2)}=O(|m|^\delta)$. From here the proof of (a) follows immediately.

(b):\
Introduce the diagonal operator $\Lambda=\diag((1+k)^{-1/2-\eps})$, $\eps>0$, acting on $\ell^2$. As $\Lambda$ is Hilbert-Schmidt, and it suffices to prove that the operator with the matrix representation of $K_\mu(a)\Lambda\iv$ is Hilbert-Schmidt.
The squared Hilbert-Schmidt norm of $K_\mu(a)\Lambda\iv$ equals
$$
\sum_{(j,k)\in\Z_+^2} |a_{j-k}|^2 (1+k)^{1+2\eps} (1-\rho_{j,k})^2.
$$
As before we split the sum into two parts, 
$$
\sum_{(j,k)\notin\cI_\delta} |a_{j-k}|^2 (1+j+k)^{1+2\eps}
+\sum_{(j,k)\in\cI_\delta} |a_{j-k}|^2(1-\rho_{j,k})^2(1+j+k)^{1+2\eps},
$$
slightly overestimating it further. Now we make the substitution  $d=j-k\in\Z$ and $s=j+k\in\Z_+$.
We arrive at the upper estimate for the first term 
$$
\sum_{(d,s) \in \Z\times\Z_+ \atop |d|^\delta\ge  s/2}|a_d|^2 (1+s)^{1+2\eps} \le C
\sum_{d\in \Z} |a_d|^2 (1+|d|)^{\delta(2+2\eps)}
\le C \|a\|_{F\ell^2_{\delta(1+\eps)}}^2.
$$
For the second term, employ $(1-\rho_{j,k})^2 \le C\,(j-k)^4(1+j+k)^{-2\beta}$, by Lemma \ref{l3.1}(a), to find that it is bounded by a constant times
$$
\sum_{(j,k)\in\cI_\delta} |a_{j-k}|^2 \frac{(j-k)^4}{(1+j+k)^{2\beta-1-2\eps}}\le 
\sum_{(d,s)\in\Z\times\Z_+ \atop |d|^\delta < s/2} 
|a_{d}|^2 \frac{d^4}{(1+s)^{2\beta-1-2\eps}}.
$$
Without loss of generality we could have chosen $\eps>0$ small enough such that $\beta>1+\eps$.
Then we can estimate further by a constant times
$$
\sum_{d\in\Z} 
|a_{d}|^2 |d|^{4+\delta(2+2\eps-2\beta)}
\le 
\sum_{d\in\Z} 
|a_{d}|^2 |d|^{\delta(2+2\eps)}
\le
\|a\|_{F\ell^2_{ \sigma(1+\eps) }}^2.
$$
This proves the assertion.
\end{proof}

{\bf Remark.} 
The condition $\beta>1$ is (in a certain sense) necessary to ensure that $K_\mu(a)$ is trace class.
More precisely, assume that the measure $\mu$ satisfies the condition 
\be\label{cond12}
(\ln m_\xi)''=\frac{\alpha}{\xi^{\beta}}+O(\xi^{-\rho}),\qquad \alpha>0,\ \  1/2<\beta\le 1, \ \ \rho>\beta.
\ee
Choose $\delta>2/\beta>1$. Using Lemma \ref{l3.1}(b) it follows easily that 
$$
\rho_{j,k}=1-\frac{\alpha(j-k)^2}{2(1+j+k)^\beta}(1+o(1))
$$
for indices $(j,k)\in \cI_\delta$. Moreover for each fixed $m$, the entries $(k,m+k)$ belongs to $\cI_\delta$ for all sufficiently large $k\ge k_0(m)$. Thus the $m$-th diagonal 
has entries 
$$
a_m (\rho_{k,k+m}-1)=a_m\frac{\alpha m^2}{2(1+m+2k)^\beta}(1+o(1)),\qquad k\ge k_0(m) .
$$
This growth (in $k$) is too large to allow  $K_\mu(a)$ to be trace class unless $ma_m=0$.
That is, under (\ref{cond12}), the operator $K_\mu(a)$ can only be trace class in the trivial case 
of constant symbol.


\section{Determinant asymptotics}
\label{s4}

Recall that given a function $a\in L^\iy(\T)$ with Fourier coefficients $a_n$, the Toeplitz and the Hankel operator
are defined by their infinite matrix representations 
\bq\label{def.T}
T(a) = (a_{j-k}),\qquad H(a)=(a_{j+k+1}),\qquad 0\le j,k <\iy.
\eq
It is well known that the relations
\bq\label{Tab}
T(ab) &=& T(a)T(b)+H(a)H(\tilde{b}),\\
H(ab) &=& T(a) H(b) + H(a) T(\tilde{b}),\label{Hab}
\eq
hold, where $\tilde{b}(t)=b(t^{-1})$, $t\in \T$. For later  introduce the flip and the projections,
\bq
&&W_n :\{x_0,x_1,\dots\}\mapsto \{x_{n-1},\dots,x_{0},0,0,\dots\},\nn\\
&&P_n\,\,:\{x_0,x_1,\dots\}\mapsto \{x_0,\dots,x_{n-1},0,0,\dots\},\nn
\eq
$Q_n=I-P_n$, and the shift operators
$V_{n}=T(t^{n})$, $ n\in\Z$.

Consistent with previous notation, we denote by $T_{n}(a)$ and $M_{\mu,n}(a)$ the $n \times n$ upper-left submatrices of the matrix representation of $T(a)$ and $M_\mu(a)$, {\em i.e.}, 
$$
T_n(a)=P_n T(a) P_n,\qquad M_{\mu,n}(a)=P_n M_\mu(a) P_n.
$$
Here we identify the upper-left $n\times n$ block in the matrix representation of the operators on the right hand sides with the $\C^{n\times n}$ matrices on the left hand sides.

\medskip
In this section we are going to establish the main auxiliary result (Theorem \ref{mainthm}), which reduces the asymptotics of the determinant
$\det M_{\mu,n}(a)$ to the asymptotics of a trace (or already gives the determinant asymptotics up to the computation of a constant).
This and the main results hold either for the Krein algebra $\K=L^\iy(\T)\cap F\ell^2_{1/2}$ (see \cite[Ch.~10]{BS}), or for several subalgebras of
$C(\T)$, which satisfy ``suitable conditions''. 
Therefore, it seems convenient to formulate Theorem \ref{mainthm} below in a quite general context and to make use of the following  definition.

\begin{definition}\label{d4.1}\em
Given a unital Banach algebra $B$ which is continuously embedded in $L^\iy(\T)$, denote by $\Phi(B)$ the set of all $a\in B$ such that the Toeplitz operator $T(a)$ is invertible on $\ell^2$.  We say such a Banach algebra $B$ {\em suitable} if:
\begin{enumerate}
\item[(a)]
$B$ is continuously embedded in $\K=L^\iy(\T)\cap F\ell^2_{1/2}$.
\item[(b)]
If $a\in \Phi(B)$, then $a\iv \in \Phi(B)$.\qed
\end{enumerate}
\end{definition}

The next proposition demonstrates the suitability of several Banach algebras which appear in the
main results.

\begin{proposition}\label{p4.1}
With $W=F \ell^1_0$ denoting the Wiener algebra, the following are suitable Banach algebras:
\begin{itemize}
\item[(i)]  $W\cap F\ell^2_{\sigma}=F\ell^2_\sigma$ for $\sigma>1/2$;
\item[(ii)] $F\ell^1_{\sigma}$ for $\sigma\ge1/2$;
\item[(iii)] $W\cap F\ell^2_{1/2}$ and $\mathrm{K}=L^\iy(\T)\cap F\ell^2_{1/2}$;
\item[(iv)] $W\cap F\ell^2(\nu)$ provided that $\nu_{-n}=\nu_n\ge n^{1/2}$, $\{\nu_n\}_{n=1}^{\iy}$ is increasing, and
$\sup\limits_{n\ge1}\frac{\nu_{2n}}{\nu_{n}}<\iy$.
\end{itemize}
\end{proposition}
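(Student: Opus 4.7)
My plan is to verify, for each of the four algebras $B$ listed, the two conditions of Definition \ref{d4.1}: (a) that $B$ embeds continuously into the Krein algebra $\K$, and (b) that $a\in\Phi(B)$ implies both $a\iv\in B$ and $T(a\iv)$ invertible on $\ell^2$.

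For the embeddings I only need elementary estimates. In (i), Cauchy--Schwarz gives
\[
\sum_{n\in\Z}|f_n|\le\Big(\sum_n(1+|n|)^{-2\sigma}\Big)^{1/2}\|f\|_{F\ell^2_\sigma},\qquad\sigma>1/2,
\]
proving simultaneously the identity $W\cap F\ell^2_\sigma=F\ell^2_\sigma$ and the inclusion $F\ell^2_\sigma\subset L^\iy$; combined with $\sigma\ge1/2$ this puts $F\ell^2_\sigma$ continuously in $\K$. In (ii), the inclusion $F\ell^1_\sigma\subset W\subset L^\iy$ is trivial, and
\[
\sum_n|f_n|^2(1+|n|)\le\Big(\sum_n|f_n|(1+|n|)^{1/2}\Big)^{\!2}\le\|f\|_{F\ell^1_\sigma}^2
\]
for $\sigma\ge 1/2$ gives $F\ell^1_\sigma\subset F\ell^2_{1/2}$. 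Cases (iii) and (iv) are immediate once $\nu_n\ge|n|^{1/2}$ is used to get $F\ell^2(\nu)\subset F\ell^2_{1/2}$ and one intersects with $W\subset L^\iy$.

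Each space is a Banach algebra; for the weighted $F\ell^1$- and $F\ell^2$-algebras this is standard and relies on polynomial, or more generally doubling, growth of the weight, while for $\K$ it is classical (see \cite[Ch.~10]{BS}). For the inverse closure I will split the claim into $a\iv\in B$ and $T(a\iv)$ invertible. In cases (i), (ii), (iv) the algebra sits inside $C(\T)$, so by the Gohberg--Krein--Widom criterion, invertibility of $T(a)$ on $\ell^2$ is equivalent to $a$ being nonvanishing on $\T$ with $\wind a=0$. Then $a\iv$ is continuous and nonvanishing with $\wind a\iv=0$, so $T(a\iv)$ is invertible as well. That $a\iv\in B$ is the weighted-Wiener inverse-closedness statement: the doubling hypothesis $\sup_n\nu_{2n}/\nu_n<\iy$ together with monotonicity forces polynomial growth of the weight, hence the Gelfand--Raikov--Shilov condition $\nu_n^{1/n}\to1$, and the standard extension of Wiener's theorem to GRS-weighted algebras yields $a\iv\in B$. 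In case (iii), the Krein--Widom factorization theorem provides, for every $a\in\Phi(\K)$, a canonical factorization $a=a_-a_+$ with $a_\pm^{\pm1}\in\K\cap H^\iy_\pm$; reading this factorization back as $a\iv=a_+\iv a_-\iv$ gives membership $a\iv\in\K$ and, by applying the same factorization criterion to $a\iv$, the invertibility of $T(a\iv)$.

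The main obstacle is the inverse closure step. For (i), (ii), (iv) all reduces to verifying the GRS condition on the weight (trivial for polynomial weights, routine under the doubling condition), but in case (iii) one must invoke the full Wiener--Hopf factorization machinery for the Krein algebra, which is genuinely deeper and is the substantive input behind the proposition.
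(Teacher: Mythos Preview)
Your argument is essentially correct and largely parallel to the paper's, but the route you take for the second half of property (b) --- showing that $T(a^{-1})$ is invertible --- differs from the paper's and is worth contrasting.

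For cases (i), (ii), (iv) you invoke the continuous-symbol criterion ($T(a)$ invertible $\Leftrightarrow$ $a$ nonvanishing with $\wind a=0$) and conclude directly that $T(a^{-1})$ is invertible since $a^{-1}$ inherits these properties. For $\K$ you appeal to the canonical Wiener--Hopf factorization $a=a_-a_+$ and read off both $a^{-1}\in\K$ and the invertibility of $T(a^{-1})=T(a_+^{-1})T(a_-^{-1})$. The paper instead proves the invertibility of $T(a^{-1})$ \emph{uniformly} for all four cases from the identities
\[
I=T(a)T(a^{-1})+H(a)H(\tilde a^{-1}),\qquad I=T(a^{-1})T(a)+H(a^{-1})H(\tilde a),
\]
using that $H(b),H(\tilde b)$ are Hilbert--Schmidt for $b\in\K$; this makes $T(a^{-1})$ a Fredholm regularizer of $T(a)$, hence Fredholm of index zero, hence invertible by Coburn's lemma. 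Your approach is perfectly valid but case-dependent; the paper's is a single argument that works once the embedding into $\K$ is in place, and avoids importing either the winding-number theorem or the full factorization theory.

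For inverse closedness in (i), (ii), (iv), your GRS argument and the paper's Gelfand-theory argument are two phrasings of the same fact (the maximal ideal space is $\T$). For $\K$ the paper simply cites inverse closedness in $L^\infty$ from \cite[Thm.~10.9]{BS}, which is lighter than the factorization you invoke. One small loose end: in (iii) you treat $W\cap F\ell^2_{1/2}$ via the $\K$-factorization, which yields $a^{-1}\in F\ell^2_{1/2}$, but you still need $a^{-1}\in W$; this follows at once from the classical Wiener theorem since $a\in W\subset C(\T)$ is nonvanishing, so the gap is easily closed (or, more simply, handle $W\cap F\ell^2_{1/2}$ together with the other continuous cases).
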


\begin{proof}
First of all, the above are indeed Banach algebras.
This is elementary for $F\ell^1_\sigma$.  A proof for $W\cap F\ell^{2}_{\sigma}$, $\sigma\ge0$ can be found in \cite[Thm.~6.54]{BS}, while the more general space $W\cap F\ell^2(\nu_\sigma)$ is treated in \cite{Ka}. For $\K$ see, e.g., \cite[Thm.~10.9]{BS}. 
As for (i), note that $F\ell^2_\sigma$ is continuously embedded in $W$ whenever $\sigma>1/2$.
Further, property (a) of suitability is immediate for these spaces.

Recall that a unital Banach algebra $B$ is called inverse closed in Banach algebra $B_0\supset B$ 
if $a\in B$ and $a\iv\in B_0$ implies that $a\iv\in B$.
For all the Banach algebras $B$ above, except for $\K$, using simple Gelfand theory and the density of the Laurent polynomials it is easily seen that the maximal ideal space can be naturally identified with $\T$. (In the case of (iv), this is also proved in \cite{Ka}.) By a standard argument, this implies that these Banach algebras are inverse closed in $C(\T)$, thus also in $L^\iy(\T)$. For a proof of the inverse closedness of $\K$ in $L^\iy(\T)$ see again \cite[Thm.~10.9]{BS}.

As for property  (b),  take  $a\in\Phi(B)$, {\em i.e.}, $a\in B$ such that $T(a)$ is invertible on $\ell^2$. 
From the theory of Toeplitz operators it is well known that then $a$ is invertible in $L^\iy(\T)$.
By the inverse closedness we thus have $a\iv\in B$. Now we observe that $b\in \K$ implies that
both $H(b)$ and $H(\tilde{b})$ are Hilbert-Schmidt. Using the formulas
\be\label{TTiv}
I=T(a)T(a\iv)+H(a)H(\tilde{a}\iv),\qquad I=T(a\iv)T(a)+ H(a\iv)H(\tilde{a}),
\ee
and the implied compactness of the Hankel operators, 
it follows that $T(a\iv)$ is a Fredholm regularizer for $T(a)$. (For information about Fredholm operators, see, e.g., \cite{GoKr}.)
Hence $T(a\iv)$ is also Fredholm with index zero and thus invertible (by Coburn's lemma \cite[Sec.~2.6]{BS}). But this means that $a\iv\in \Phi(B)$.
\end{proof}

The next proposition shows (besides a technical result (ii)) that the constant $G[a]$ is well-defined for all $a\in\Phi(B)$. 
This constant appears in our limit theorem as it did appear in the classical Szeg\"o-Widom limit theorem.
 We follow closely the arguments of \cite[Ch.~10]{BS}. 

\begin{proposition}\label{p4.2}
Let $B$ be a suitable Banach algebra, and $a\in \Phi(B)$. 
\begin{enumerate}
\item[(i)]  With $[\,\ast\,]_{00}$  the $(0,0)$-entry of the matrix representation on $\ell^2$, the constant
\be\label{Ga1}
G[a]:= [T^{-1}(a\iv)]_{00}
\ee
is nonzero. 
\item[(ii)]
With  $A_n=P_n T\iv(a\iv) P_n$, we have $\det A_n=G[a]^n$, and
$$
A_n\iv \to T(a\iv),\qquad (A_n^*)\iv\to T(a\iv)^*
$$
strongly on $\ell^2$ as $n\to\iy$. ($A^*$ is the adjoint of $A$). Moreover, the mappings 
$$
\Lambda_n:a\in \Phi(B)\mapsto A_n\iv \in \cL(\ell^2)
$$ 
are equi-continuous.
\item[(iii)]
If $b\in B$, then $e^b\in \Phi(B)$ and $G[e^b]=e^{b_0}$, where $b_0$ is the $0$-th Fourier coefficient.
\end{enumerate}
\end{proposition}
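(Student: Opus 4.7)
The plan is to exploit a Wiener--Hopf factorization $a = a_-\,a_+$ so as to expose triangular structure in $T^{-1}(a^{-1})$. For the subalgebras of $C(\T)$ listed in Proposition~\ref{p4.1}, such a factorization exists explicitly in $B$: inverse closedness in $C(\T)$ together with vanishing winding number (forced by $T(a)$ being Fredholm of index zero) lets one put $\log a \in B$ and then set $a_\pm = e^{(\log a)_\pm}$ via the Riesz projection. For the Krein algebra $\K$, where $a$ may be discontinuous, the factorization lives in a weaker space and must be produced via the techniques of \cite[Ch.~10]{BS}, which is the main technical obstacle.

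I would first prove (iii) as a template. Given $b\in B$, split $b = b_- + b_+$ with $(b_-)_m = 0$ for $m > 0$ and $(b_+)_m = 0$ for $m < 0$. Since $H(e^{b_-}) = 0$ (its Fourier support lies in $\Z_{\le 0}$), the product rule (\ref{Tab}) applied to $e^b = e^{b_-}\,e^{b_+}$ collapses to $T(e^b) = T(e^{b_-})\,T(e^{b_+})$, a product of an upper and a lower triangular Toeplitz operator with constant nonzero diagonals $e^{(b_\pm)_0}$; in particular $e^b \in \Phi(B)$. Applying the same reasoning to $e^{-b}$ and inverting yields $T^{-1}(e^{-b}) = T(e^{b_+})\,T(e^{b_-})$, whose $(0,0)$ entry equals $e^{(b_+)_0}\,e^{(b_-)_0} = e^{b_0}$, proving $G[e^b] = e^{b_0}$.

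For (i) and (ii), I would apply the same manipulation to $a = a_- a_+$ to obtain $T^{-1}(a^{-1}) = T(a_+)\,T(a_-)$, a genuine LU factorization with diagonals $(a_\pm)_0$. Because $T(a_-)$ is upper triangular, $Q_n T(a_-) P_n = 0$, and a short block computation gives
$$
A_n = P_n T(a_+)\,T(a_-)\,P_n = T_n(a_+)\,T_n(a_-),
$$
so $\det A_n = ((a_+)_0(a_-)_0)^n = G[a]^n$. In particular $\det A_n \neq 0$ for every $n$, establishing (i). For the strong convergence in (ii), inverting the triangular finite factors and using the standard identity $T_n(f)T_n(g) = T_n(fg) - W_n H(\tilde f) H(g) W_n$ (the residual Hankel correction vanishes by the analyticity observed in (iii)) gives
$$
A_n^{-1} = T_n(a^{-1}) - W_n H(\widetilde{a_-^{-1}})\,H(a_+^{-1})\,W_n.
$$
Both Hankel factors are Hilbert--Schmidt since $B$ embeds continuously in $\K$, so their product is compact, and $W_n \rightharpoonup 0$ on $\ell^2$ then forces the correction term to tend to zero strongly; meanwhile $T_n(a^{-1})\to T(a^{-1})$ strongly in the usual way. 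The companion convergence $(A_n^*)^{-1}\to T(a^{-1})^*$ is obtained by repeating the argument with $a$ replaced by $\bar a \in \Phi(B)$. Equi-continuity of $\Lambda_n$ reduces to continuity of the factorization map $a\mapsto (a_-,a_+)$ on $\Phi(B)$ combined with $n$-uniform norm bounds on the two pieces of the displayed formula.

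The principal difficulty, as already flagged, is securing the Wiener--Hopf factorization in the Krein-algebra case; approximating $a\in\Phi(\K)$ by smoother symbols and exploiting continuous dependence of $A_n$, $T_n(a^{-1})$, and the Hankel pieces on $a$ transfers the statement from the smooth setting, but this step is delicate and is where most of the actual work resides.
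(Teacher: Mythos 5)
Your argument takes a genuinely different route from the paper.  The paper never uses a Wiener--Hopf factorization for parts (i) and (ii): it invokes Kozak's formula for the inverse of a compression, obtaining directly
$$
A_n^{-1} \;=\; P_n T(a^{-1}) P_n - P_n T(a^{-1}) V_n T^{-1}(a^{-1}) V_{-n} T(a^{-1}) P_n \;=\; W_n T^{-1}(\tilde a) W_n,
$$
from which the equi-continuity of $a\mapsto A_n^{-1}$ is read off immediately (each ingredient $P_n$, $V_{\pm n}$, $W_n$ has norm one, and $a\mapsto T(a^{-1})$, $a\mapsto T^{-1}(a^{-1})$ are continuous), and the identity $\det A_n = G[a]^n$ comes from Cramer's rule plus the observation that $[A_n^{-1}]_{n-1,n-1} = [T^{-1}(\tilde a)]_{00}$ is $n$-independent.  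Your factorization route is classical and, for the subalgebras of $C(\T)$ in Proposition~\ref{p4.1}, does work: there the Riesz projection is bounded, inverse closedness gives $\log a\in B$, and the triangular manipulation yielding $A_n = T_n(a_+) T_n(a_-)$, $\det A_n=((a_+)_0(a_-)_0)^n$, and $A_n^{-1} = T_n(a^{-1}) - W_n H(\widetilde{a_-^{-1}}) H(a_+^{-1}) W_n$ is sound (with the factorization written as $a=a_-a_+$; a couple of your displayed products are in the wrong order, but this is cosmetic since the factors are scalar).  What the Kozak approach buys is precisely the uniformity you are missing: the Krein algebra $\K=L^\iy(\T)\cap F\ell^2_{1/2}$ is not stable under the Riesz projection, so $b_\pm\notin\K$ in general and the factors $a_\pm$ need not lie in $\K$, which is exactly why you flag the $\K$ case as delicate.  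The paper's proof of (i)--(ii) sidesteps factorization entirely and therefore covers $\K$ with no extra work; factorization only appears in the paper for the identity $G[e^b]=e^{b_0}$ in (iii), and there it cites the standard approximation argument in B\"ottcher--Silbermann.

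Two smaller points.  First, your claim that equi-continuity ``reduces to continuity of the factorization map $a\mapsto(a_-,a_+)$'' is correct in spirit but is itself a nontrivial statement (it requires that Wiener--Hopf factorization is a locally Lipschitz operation on $\Phi(B)$, which must be proved, not assumed); the paper's representation makes this step unnecessary.  Second, your strong-convergence argument is fine: the residual Hankel product is Hilbert--Schmidt since $B\hookrightarrow\K$, and $W_n\rightharpoonup 0$ weakly turns $W_n K W_n\to 0$ strongly for compact $K$; but it is cleaner to get $(A_n^*)^{-1}\to T(a^{-1})^*$ by simply taking adjoints in the displayed formula rather than re-running the argument with $\bar a$.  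In sum: a legitimate alternative proof for the $C(\T)$-subalgebra case, with an acknowledged but real gap at $\K$ that the paper's Kozak-formula argument is specifically designed to close.
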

\begin{proof}
(i)-(ii):\
If $a\in \Phi(B)$, then $a\iv \in \Phi(B)$ and hence $T(a\iv)$ is invertible. 
Hence the definitions of $G[a]$ and $A_n$ make sense. Notice that for $n=1$, we have
$\det A_1=A_1=[T\iv(a\iv)]_{00}=G[a]$. Hence (i) will follow from the invertibility of $A_n$ in the case $n=1$.

To show the invertibility of $A_n$ we  use a simple, but useful formula due to Kozak. If $P$ is a projection, $Q=I-P$ is the complementary projection, and $A$ is an invertible operator, then $PAP|_{\mathrm{Im}(P)}$ is invertible if and only if
so is $Q A\iv Q|_{\mathrm{Im}(Q)}$. In fact, the formula
$$
(PAP)|_{\mathrm{Im}(P)}\iv = PA\iv P|_{\mathrm{Im}(P)} -PA\iv Q(QA\iv Q)|_{\mathrm{Im}(Q)}\iv QA\iv P|_{\mathrm{Im}(P)}
$$
holds, which can be easily verified (see also \cite[Prop.~7.15]{BS}). 

Applying Kozak's formula to $A_n=P_nT\iv(a\iv) P_n$ we see that $A_n$ is invertible if and only if $Q_n T(a\iv) Q_n$ is invertible,
and in this case we have
\be\label{An.iv-0}
A_n\iv = P_n T(a\iv) P_n - P_n T(a\iv) Q_n (Q_n T(a\iv )Q_n)\iv Q_n T(a\iv) P_n.
\ee
Notice that $Q_n T(a\iv) Q_n$ is nothing but the ``shifted'' Toeplitz operator. 
Using $V_nV_{-n}=Q_n$, $V_{-n}V_n=I$, we obtain
$(Q_n T(a\iv) Q_n)\iv = V_n T\iv(a\iv) V_{-n}$ and hence
\be
A_n\iv = P_n T(a\iv) P_n - P_n T(a\iv) V_{n} T\iv(a\iv) V_{-n} T(a\iv) P_n.
\label{An.iv}
\ee
We have thus shown that $A_n$ is invertible and in particular (i).  Moreover, from this representation it follows immediately that the mappings
$\Lambda_n$ are equi-continuous. If suffices to remark that the operators $P_n$ and $V_{\pm n}$ have norm one, and that the various mappings $b\in\Phi(B)\mapsto b\iv\in\Phi(B)$, $b\in B\mapsto T(b)\in \cL(\ell^2)$, $B\in G\cL(\ell^2)\mapsto B\iv\in\cL(\ell^2)$ are continuous.
Using that $P_n=P_n^*\to I$ strongly, and $V_{n}^*=V_{-n}\to 0$ strongly on $\ell^2$, it follows that $A_n\iv$ and their adjoints
converge strongly.

In order to prove $\det P_n T\iv(a\iv)P_n=G[a]^n$ is suffices to prove that 
\be\label{f.Cram}
\frac{\det A_n}{\det A_{n-1}}=G[a]
\ee
for $n\ge 1$. For $n=1$ with $\det A_0:=1$, this is just the definition of $G[a]$. By noting that $A_{n-1}=P_{n-1}A_nP_{n-1}$ it follows from Cramer's rule that 
$$
\frac{\det A_{n-1}}{\det A_n}=[A_n\iv]_{n-1,n-1}
$$
for $n\ge2$ while the statement is obvious for $n=1$.
Reformulating the above expression (\ref{An.iv}) for $A_n\iv$ one step further, we have
\be\label{An.iv2}
A_n\iv = W_nT(\tilde{a}\iv)W_n-W_nH(\ta\iv)T\iv(a\iv)H(a\iv)W_n=W_nT\iv(\tilde{a})W_n.
\ee
Here we use the general formulas 
$$
P_nT(b)P_n=W_n T(\tilde{b})W_n,\quad 
P_nT(b)V_n=W_nH(\tilde{b}),\quad
V_{-n}T(b)P_n=H(b)W_n.
$$
as well as an identity relating the inverses of $T(a\iv)$ and $T(\tilde{a})$ to each other (which either can be derived from Kozak's formula or by using (\ref{Tab}), (\ref{Hab})).
Due to the definition of the $W_n$, we see that the lower-right entry of $A_n\iv$ does not depend on $n$ for $n\ge1$, {\em i.e.}, 
$$
[A_n\iv]_{n-1,n-1}=[T\iv(\tilde{a})]_{00}=1/G[a],
$$
the last equality following from (\ref{f.Cram}) for $n=1$.
This completes the proof of (\ref{f.Cram}) for all $n$.

(iii):\  Using (\ref{TTiv}) it can be seen that $T(e^{-\lambda b})$ is a Fredholm regularizer of
$T(e^{\lambda b})$, $\lambda\in[0,1]$. Due to the stability of the Fredholm index under perturbation,
all these operators have Fredholm index zero; hence they are invertible (Coburn's lemma \cite[Sec.~2.6]{BS}). This proves $e^b\in\Phi(B)$.
A proof of $G[e^b]=e^{b_0}$ can now be given via an approximation argument and by using Wiener-Hopf factorization (see \cite[Prop.~10.4]{BS}).
\end{proof}

Before stating the main result of this section, we introduce two conditions on a Banach algebra $B\subseteq L^\iy(\T)$.

\begin{enumerate}
\item[{\bf (TC)}]
For all $a\in B$ the operator $K_\mu(a)$ is trace class and 
$\|K_\mu(a)\|_{\cC_1(\ell^2)} \le C \|a\|_{B}.$
\item[{\bf (HS)}]
For all $a\in B$ the operator $K_\mu(a)$ is Hilbert-Schmidt and
$
\|K_\mu(a)\|_{\cC_2(\ell^2)} \le C \|a\|_{B}.
$
\end{enumerate}
Propositions \ref{p3.2} and \ref{p3.3} identify Banach algebras $B$ which satisfy the above, the criteria
involving the underlying measure $\mu$ (the constant $C$ depends on $\mu$).

\begin{theorem}
\label{mainthm}
Let $B\subset L^\iy(\T)$ be a suitable Banach algebra.
\begin{enumerate}
\item[(a)] Suppose $B$ satisfies {\bf (TC)}. Then for $a\in \Phi(B)$ we have
\bq\label{eq19}
\lim_{n\to\iy}\frac{\det M_{\mu,n}(a)}{G[a]^n}=E[a]
\eq
where
$$
E[a]=\det\Big( T(a\iv) M_\mu(a)\Big).
$$
The constant $E[a]$ is a well-defined operator determinant, and the convergence (\ref{eq19})
is uniform in $a\in\Phi(B)$ on compact subsets of $\Phi(B)$.

\item[(b)]
Suppose $B$ satisfies {\bf (HS)}. Then for $a\in \Phi(B)$ we have
\bq\label{eq20}
\lim_{n\to\iy} \frac{\det M_{\mu,n}(a)}{G[a]^n\cdot  \exp(\trace P_n T(a\iv)K_\mu(a)P_n)}=H[a]
\eq
with 
$$
H[a]=\det\Big(T(a\iv)M_\mu(a)e^{-T(a\iv)K_\mu(a)}\Big).
$$
Again, the constant $H[a]$ is a well-defined operator determinant, and the convergence
(\ref{eq20}) is uniform in $a\in \Phi(B)$ on compact subsets of $\Phi(B)$.
\end{enumerate}
\end{theorem}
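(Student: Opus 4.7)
\medskip
\noindent\textbf{Proof plan.}
The plan is to reduce both statements to convergence of Fredholm determinants. I will begin from
$$
\det M_{\mu,n}(a) \;=\; \det A_n \cdot \det{}_{P_n\ell^2}\!\bigl(A_n\iv M_{\mu,n}(a)\bigr) \;=\; G[a]^n\, \det B_n,
$$
where $\det A_n = G[a]^n$ by Proposition \ref{p4.2}(ii) and $B_n := A_n\iv P_n M_\mu(a) P_n + Q_n$ is viewed as an operator on $\ell^2$. Since $B_n-I$ has rank at most $n$, it is automatically trace class and $\det B_n$ is a well-defined Fredholm determinant. The theorem thus reduces to identifying the limit of $\det B_n$ in case (a), or of $\det(B_n e^{-Y_n})=\det B_n\cdot e^{-\trace Y_n}$ in case (b), with $Y_n:=P_nT(a\iv)K_\mu(a)P_n$.

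For the analysis of $B_n-I$ I decompose $M_\mu(a)=T(a)+K_\mu(a)$ and rewrite (\ref{An.iv}) as $A_n\iv=P_nT(a\iv)P_n-W_nZW_n$ with $Z:=H(\tilde{a}\iv)T\iv(a\iv)H(a\iv)$, using the standard identities $T(a\iv)T(a)=I-H(a\iv)H(\tilde{a})$ and $P_nT(a\iv)Q_n=W_nH(\tilde{a}\iv)V_{-n}$, so as to produce
$$
B_n-I = -P_n H(a\iv)H(\tilde{a})P_n - W_nH(\tilde{a}\iv)H(a)W_n - W_nZW_n P_nT(a)P_n + A_n\iv P_n K_\mu(a) P_n.
$$
In case (a), the embedding $B\subseteq\K$ makes every Hankel Hilbert--Schmidt and every pairwise Hankel product trace class, while (TC) makes the last summand trace class with limit $T(a\iv)K_\mu(a)$; the $W_n$-flanked pieces vanish in trace norm via $W_n\to 0$ strongly together with the standard fact that $W_nXW_n\to 0$ in trace norm for trace-class $X$. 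This yields $B_n-I\to T(a\iv)M_\mu(a)-I$ in trace norm and hence $\det B_n\to E[a]$ by continuity of the Fredholm determinant.

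Case (b) is more delicate because $A_n\iv P_n K_\mu(a) P_n\to Y:=T(a\iv)K_\mu(a)$ holds only in Hilbert--Schmidt norm. Introducing $C_n:=B_n e^{-Y_n}$, which satisfies $\det C_n=\det B_n\cdot e^{-\trace Y_n}$ (both factors being finite-rank on $P_n\ell^2$), and splitting $B_n-I=T_n+S_n$ into its trace-class part $T_n$ and its Hilbert--Schmidt part $S_n:=A_n\iv P_nK_\mu(a)P_n$, I expand
$$
C_n-I = (e^{-Y_n}-I+Y_n) + T_n e^{-Y_n} + (S_n-Y_n) + S_n(e^{-Y_n}-I).
$$
Three of the four summands are routinely trace class with the expected limits: the first via its Taylor series starting at $Y_n^2/2$ (HS times HS), the second via $T_n\to -H(a\iv)H(\tilde{a})$ in trace norm combined with $e^{-Y_n}\to e^{-Y}$ strongly, and the fourth as a product of two Hilbert--Schmidt factors each converging in HS norm. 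The main obstacle is the third piece, for which (\ref{An.iv}) yields
$$
S_n-Y_n = -W_nH(\tilde{a}\iv)V_{-n}K_\mu(a)P_n - W_nZW_nP_nK_\mu(a)P_n,
$$
and the hard step will be to exhibit each of these as a product of two Hilbert--Schmidt factors one of which tends to zero in HS norm (exploiting that $W_n,V_{-n}\to 0$ strongly together with the HS property of the relevant Hankels and of $K_\mu(a)$). Once $S_n-Y_n\to 0$ in trace norm, one has $C_n-I\to T(a\iv)M_\mu(a)e^{-Y}-I$ in trace norm and therefore $\det C_n\to H[a]$. The well-definedness of $E[a]$ and $H[a]$ as operator determinants is confirmed by the same trace-class identifications, and uniformity of the convergence on compact subsets of $\Phi(B)$ will follow from the equi-continuity of $\Lambda_n$ in Proposition \ref{p4.2}(ii) combined with the continuous dependence on $a$ of the trace or HS norms supplied by (TC) and (HS).
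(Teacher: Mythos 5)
Your overall strategy (reduce to Fredholm determinants of $I + $ (finite rank or trace class), use Proposition \ref{p4.2} for $\det A_n = G[a]^n$, and regularize by $e^{-Y_n}$ in the HS case) is the same as the paper's, but your intermediate decomposition contains a genuine error that invalidates the key convergence step.

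You claim that the two $W_n$-flanked terms $-W_nH(\tilde{a}^{-1})H(a)W_n$ and $-W_nZW_nP_nT(a)P_n$ vanish individually in trace norm, justified by ``$W_n\to 0$ strongly'' and ``$W_nXW_n\to 0$ in trace norm for trace-class $X$.'' Both assertions are false. First, $W_n$ does not converge strongly to $0$: $W_ne_0=e_{n-1}$, so $\|W_ne_0\|=1$ for all $n$; $W_n$ converges to $0$ only in the weak operator topology. Second, for rank-one $X=\langle\cdot,u\rangle v$ one has $W_nXW_n=\langle\cdot,W_nu\rangle W_nv$ with $\|W_nXW_n\|_{\cC_1}=\|P_nu\|\,\|P_nv\|\to\|u\|\,\|v\|\ne0$, so $W_nXW_n$ does not tend to $0$ in trace norm. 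In fact, in your decomposition neither flanked term goes to $0$ individually; what is true (and forced by consistency with the correct limit $B_n-I\to T(a^{-1})M_\mu(a)-I$) is that their \emph{sum} tends to $0$ in trace norm, but establishing that directly requires an additional algebraic identity that your argument does not supply. The same untenable claim is silently reused in case (b) when you assert $T_n\to -H(a^{-1})H(\tilde{a})$ in trace norm.

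The paper avoids this cancellation problem entirely by never expanding $A_n^{-1}$ before multiplying by $T(a)$. Writing $T(a)=T^{-1}(a^{-1})+L(a)$ with $L(a)=-T^{-1}(a^{-1})H(a^{-1})H(\tilde{a})$ (trace class since $B\subseteq\K$), one gets $M_{\mu,n}(a)=A_n+P_n(L(a)+K_\mu(a))P_n$, hence $B_n-I=A_n^{-1}P_n(L(a)+K_\mu(a))P_n$. Then $A_n^{-1}P_nL(a)P_n\to T(a^{-1})L(a)=-H(a^{-1})H(\tilde{a})$ in trace norm follows directly from Proposition \ref{p4.2}(ii) (strong convergence of $A_n^{-1}$ and $(A_n^*)^{-1}$) together with $L(a)$ being trace class; no $W_n$-flanked pieces are produced. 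The repair for your approach would be to either prove the cancellation of the two flanked terms algebraically, or (more simply) to package $L(a)$ as the paper does.
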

\begin{proof}
The first steps in the proof of (a) and (b) are the same.  As in the previous proposition define
$A_n=P_n T\iv(a\iv) P_n$. Recall (\ref{TTiv}) to conclude that 
$$
T(a)= T\iv(a\iv)+L(a),\qquad L(a):=-T\iv(a\iv)H(a\iv)H(\ta)
$$
with $L(a)$ being trace class. The latter follows from the fact that $H(b)$ and $H(\tb)$ are Hilbert-Schmidt for $b\in B\subseteq \K$,
while appropriate norm estimates also hold. Moreover, property (b) of the suitability of $B$ implies that the mapping
$$
a\in\Phi(B)\mapsto L(a)\in  \cC_1(\ell^2)
$$ 
is continuous. Now we can write
\bq
M_{\mu,n}(a) &=& P_n \left(T\iv(a\iv)+L(a)+K_\mu(a)\right) P_n\nn\\
&=& A_n +P_n(L(a)+K_\mu(a))P_n.\nn
\eq
Using Proposition \ref{p4.2}(ii) we obtain
\be\label{frac.det}
\frac{\det M_{\mu,n}(a)}{G[a]^n}=\det\left(P_n+A_n\iv P_n(L(a)+K_\mu(a))P_n  \right).
\ee

(a):\  Assume condition {\bf (TC)}. Then $K_\mu(a)$ is trace class, and the mapping $a\in\Phi(a)\mapsto
K_\mu(a)\in\cC_1(\ell^2)$ is continuous.  Consequently, again by Proposition \ref{p4.2}(ii),
$$
\det\left(P_n+A_n\iv P_n(L(a)+K_\mu(a))P_n  \right)
$$
converges to the well defined operator determinant
$$
\det\left(I+T(a\iv)(L(a)+K_\mu(a)) \right),
$$
which equals 
$$
\det\left(T(a\iv)(T(a)+K_\mu(a))\right)=\det\left( T(a\iv)M_\mu(a)\right).
$$
As to the uniform convergence on compact subset of $\Phi(B)$, it is enough to show that the family of maps
$$
 a\in \Phi(B) \mapsto \det\left(P_n + A_n\iv P_n(L(a)+K_\mu(a))P_n  \right)\in \C
$$
are equi-continuous. To see this we use the equi-continuity of $a\in \Phi(B) \mapsto A_n\iv\in \cL(\ell^2)$
and the continuity of $a\in \Phi(B)\mapsto L(a)+K_\mu(a)\in \cC_1(\ell^2)$ along with fact that $\sup \|A_n\iv\|<\iy$ for each $a\in \Phi(B)$. This implies that the maps
$$
a\in \Phi(B)\mapsto A_n\iv P_n(L+K_\mu(a)) P_n
$$
are equi-continuous and bounded. Finally, in order to pass to the determinant we use the general estimate
$$
|\det(I+A)-\det(I+C)|\le \|A-C\|_{1}\exp\left(\max\{\|A\|_1,\|C\|_1\}\right),
$$
which holds for trace class operators $A,C$.

(b):\
Now assume condition {\bf (HS)}. 
In view of (\ref{frac.det}) introduce
$$
C_n = A_n\iv P_n(L(a)+K_\mu(a)) P_n.
$$
Then 
$$
C_n = A_n\iv P_n L(a) P_n +P_n T(a\iv) K_\mu(a) P_n+ D_n
$$
with
$$
D_n  =
(A_n\iv P_n - P_n T(a\iv)) K_\mu(a) P_n.
$$
{}From (\ref{An.iv-0}) and $P_n=I-Q_n$ we obtain
\bq
A_n\iv P_n  -P_n T(a\iv) &=& -P_nT(a\iv)Q_n  - P_n T(a\iv) Q_n (Q_nT(a\iv)Q_n)\iv Q_n  T(a\iv) (I-Q_n)
\nn\\
&=&  - P_n T(a\iv) Q_n (Q_nT(a\iv)Q_n)\iv Q_n  T(a\iv)\nn
\eq
Using the same arguments as in the derivation of (\ref{An.iv}) and (\ref{An.iv2}), this equals
$$
- W_n H(\tilde{a}\iv) T\iv(a\iv) V_{-n} T(a\iv),
$$
whence
$$
D_n= -W_n H(\tilde{a}\iv) T\iv(a\iv) V_{-n}T(a\iv) K_\mu(a)P_n.\nn
$$
Since $H(\tilde{a}\iv)$ and $K_\mu(a)$ are each Hilbert-Schmidt, and $V_{-n}\to 0$ strongly, it follows that 
$D_n\to0$ in the trace norm. Moreover, from the explicit representation it is seen 
that the family of mappings $a\in \Phi(B) \mapsto D_n\in \cC_1(\ell^2)$ is equi-continuous.

Further, by Proposition \ref{p4.2}(ii), $A_n\iv P_n L(a) P_n\to  T(a\iv) L(a)$ converges
in the trace norm, and the family of maps $a\in\Phi(B)\mapsto A_n\iv P_n L(a)P_n\in\cC_1(\ell^2)$ 
is equi-continuous.

In contrast, $P_nT(a\iv) K_\mu(a)P_n$ converges only in the Hilbert-Schmidt norm
to $T(a\iv) K_\mu(a)$, while  the mappings $a\in\Phi(B)\mapsto
P_nT(a\iv)K_\mu(a)P_n\in\cC_2(\ell^2)$ are equi-continuous. 

We can now conclude that on each compact subset of $\Phi(B)$, the afore-mentioned maps
are actually uniformly equi-continuous and uniformly bounded. Hence we have uniform convergence
of the corresponding sequences of operators in the trace class or Hilbert-Schmidt norm.

With $C=T(a\iv)L(a) + T(a\iv) K_\mu(a)=T(a\iv) M_\mu(a)$, noting that $L(a)=T(a)-T(a\iv)\iv$,
 it follows that, as $n \rightarrow \infty$,
$$
(I+C_n)e^{-P_nT(a\iv) K_\mu(a) P_n}-I  \rightarrow (I+C)e^{-T(a\iv) K_\mu(a)}-I,
$$
uniformly on compact subset of $\Phi(B)$ in  trace norm. 
Consequently,
$$
  \lim_{n \rightarrow \infty} \det\left((I+C_n)e^{-P_n T(a\iv) K_\mu(a) P_n}\right)
=
\det \left((I+C)e^{-T(a\iv) K_\mu(a)}\right),
$$
also uniformly.
\end{proof}

Let us summarize what we have achieved thus far:

\medskip

Assuming the moment condition (C1), {\em i.e.}, ``$\beta>1$'',  we have both the trace class condition {\bf (TC)}
and the  Hilbert-Schmidt condition {\bf (HS)} available (see Proposition \ref{p3.2} and \ref{p3.3}).
The easiest way is to assume {\bf (TC)} and use
Theorem \ref{mainthm}(a) to conclude a limit theorem. However, the trace class conditions are much stronger
than the Hilbert-Schmidt conditions, and it is worthwhile to see what can be done assuming only the latter. Then we can apply
Theorem \ref{mainthm}(b), and are left with the computation of traces (which will be done in
Proposition \ref{p5.1} below). While we get a better result assuming only {\bf (HS)},  
the constant expression will be more complicated.

\medskip

Assuming the moment condition (C2), {\em i.e.}, ``$1/2<\beta\le 1$'', $K_\mu(a)$ will in general not be trace class
(see the remark at the end of Section \ref{s3}). Therefore we are left with Theorem \ref{mainthm}(b)
and the computation of the traces, which in this case is more diffucult and will occupy most of the next section.


\section{Asymptotics of the trace}
\label{s5}

As just pointed out, in order to make use of part (b) of Theorem \ref{mainthm}, we need to evaluate the
trace term. We distinguish between the two cases indicated above.

The case of $\beta>1$ is completely settled by the following proposition, which shows that the trace converges to a constant.

\begin{proposition}\label{p5.1}
Assume the moment condition {\em (C1)}, and put $\sigma=1/2\vee 1/\beta$. Then, for $a,b\in F\ell^2_\sigma$, we have
\bq\label{f.24}
  \trace \left(P_n T(b)K_{\mu}(a) P_n\right) &=& \tau_\mu(a,b) + o(1), \qquad n\to\iy,
\eq
where
\bq\label{f.25}
\tau_\mu(a,b) &:=&\sum_{j,k=0}^\iy b_{k-j}a_{j-k}(\rho_{j,k}-1).
\eq
The series (\ref{f.25}) converges absolutely. Moreover, the convergence (\ref{f.24}) is uniform in $(a,b)$ on compact subsets of $F\ell^2_\sigma\times F\ell^2_{\sigma}$.
\end{proposition}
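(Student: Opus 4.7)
The plan is to reduce everything to the single-diagonal estimate
\be\label{pp.key}
\sum_{k\ge 0}|\rho_{k+m,k}-1|\le C(1+|m|)^{2\sigma},\qquad m\in\Z,
\ee
which is exactly what the proof of Proposition~\ref{p3.3}(a) established (the exponent $\delta=1\vee 2/\beta$ appearing there equals $2\sigma$ here), and then split the resulting weight via Cauchy--Schwarz to descend from that diagonal-by-diagonal bound to the claimed $F\ell^2_\sigma\times F\ell^2_\sigma$ continuity.

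First I would make the trace explicit: since $(T(b)K_\mu(a))_{i,i}=\sum_{j\ge 0}b_{i-j}a_{j-i}(\rho_{j,i}-1)$, setting $\ell=j-i$ yields
\be
\trace\bigl(P_nT(b)K_\mu(a)P_n\bigr)=\sum_{\ell\in\Z}a_\ell\,b_{-\ell}\,S_n(\ell),\qquad S_n(\ell):=\sum_{i=\max(0,-\ell)}^{n-1}(\rho_{i+\ell,i}-1),
\ee
and the analogous rearrangement of the double sum defining $\tau_\mu(a,b)$ gives $\tau_\mu(a,b)=\sum_\ell a_\ell b_{-\ell}S_\iy(\ell)$ (with $S_\iy(\ell)=S_\iy(-\ell)$ by the symmetry $\rho_{j,k}=\rho_{k,j}$). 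By \eqref{pp.key} both $|S_n(\ell)|$ and $|S_\iy(\ell)|$ are at most $C(1+|\ell|)^{2\sigma}$ uniformly in $n$ and $\ell$. Splitting $(1+|\ell|)^{2\sigma}=(1+|\ell|)^\sigma\cdot(1+|\ell|)^\sigma$ and applying Cauchy--Schwarz gives $\sum_\ell|a_\ell b_{-\ell}|(1+|\ell|)^{2\sigma}\le \|a\|_{F\ell^2_\sigma}\|b\|_{F\ell^2_\sigma}$, which simultaneously supplies absolute convergence of the series for $\tau_\mu$, the bound $|\tau_\mu(a,b)|\le C\|a\|_{F\ell^2_\sigma}\|b\|_{F\ell^2_\sigma}$, and an $n$-uniform estimate of the same form on the trace.

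Pointwise convergence $\trace(P_nT(b)K_\mu(a)P_n)\to\tau_\mu(a,b)$ then follows by dominated convergence: $S_n(\ell)\to S_\iy(\ell)$ for each fixed $\ell$, and the summand is dominated by the summable expression $2C(1+|\ell|)^{2\sigma}|a_\ell b_{-\ell}|$. To promote this to uniform convergence on a compact set $\mathcal{K}\subset F\ell^2_\sigma\times F\ell^2_\sigma$, I would use Kolmogorov--Riesz to find, given $\eps>0$, an $N$ such that the $|\ell|>N$ portion of the dominator is smaller than $\eps$ uniformly over $\mathcal{K}$; the remaining finite sum $|\ell|\le N$ converges uniformly on $\mathcal{K}$ because the $2N+1$ coordinate functionals $a\mapsto a_\ell$, $b\mapsto b_{-\ell}$ are continuous on $F\ell^2_\sigma$ and each $S_n(\ell)\to S_\iy(\ell)$ as $n\to\iy$. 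The only delicate step is arranging the Cauchy--Schwarz weight split so that exactly $F\ell^2_\sigma$ with $\sigma=1/2\vee 1/\beta$ (rather than the stronger $F\ell^1_{2\sigma}$ or $F\ell^2_{2\sigma+\eps}$ demanded by Proposition~\ref{p3.3}) suffices; beyond that the argument is routine bookkeeping.
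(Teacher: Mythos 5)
Your proof is correct and follows essentially the same route as the paper's. You package the key estimate by diagonals, citing the bound $\sum_{k\ge0}|\rho_{k+m,k}-1|\le C(1+|m|)^{2\sigma}$ extracted from the proof of Proposition~\ref{p3.3}(a) (where the quantity there labelled $\delta=1\vee 2/\beta$ equals $2\sigma$ in the present setting), whereas the paper's proof re-derives the same bound directly by splitting the double sum over $\cI_\delta$ versus its complement via Lemma~\ref{l3.1}(a). The Cauchy--Schwarz step and the dominated-convergence argument coincide, and your Kolmogorov--Riesz tail-cutting argument for uniformity on compacts is an entirely standard variant of the paper's appeal to equi-continuity plus pointwise convergence.
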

\begin{proof}
By Proposition \ref{p3.2} the operator $K_\mu(a)$ is a Hilbert-Schmidt and hence bounded and linear. 
Consequently the trace equals
$$
\trace\left( P_n T(b)K_{\mu}(a) P_n\right) =\sum_{j=0}^\iy\sum_{k=0}^{n-1} b_{k-j}a_{j-k}(\rho_{j,k}-1).
$$
We claim that the estimate
\be\label{F0}
\sum_{j,k=0}^\iy \left|b_{k-j} a_{j-k}(\rho_{j,k}-1)\right|\le C\|a\|_{F\ell^2_\sigma}\|b\|_{F\ell^2_\sigma}
\ee
holds. Indeed, put $\delta=2\sigma=1\vee 2/\beta$, recall $0<\rho_{j,k}\le1$, and split the sum into
$$
\sum_{(j,k)\notin \cI_\delta} |b_{k-j}a_{j-k}|+\sum_{(j,k)\in\cI_\delta}|b_{j-k}a_{j-k}(\rho_{j,k}-1)|,
$$
where $\cI_\delta$ is defined in (\ref{I.delta}).
Using Lemma \ref{l3.1}(a) and substituting  $m=j-k$ and $\ell=j+k$ we can overestimate this by
$$
\sum_{(m,\ell) \in \Z\times\Z_+ \atop 2|m|^\delta\ge \ell}|b_{-m}a_m| +
\sum_{(m,\ell) \in \Z\times\Z_+ \atop 2|m|^\delta <  \ell}|b_{-m}a_m|\frac{m^2}{\ell^{\beta}}
\le C
\sum_{m=-\iy}^\iy|b_{-m}a_m| |m|^\delta+ C \sum_{m=-\iy}^\iy|b_{-m}a_m| |m|^{2+\delta(1-\beta)}.
$$
From Cauchy's inequality and since $\delta\beta\ge 2$, we  obtain (\ref{F0}).

The convergence (\ref{f.24}) of the trace now follows from (\ref{F0}) by dominated convergence. The absolute convergence of
(\ref{f.25}) is also a consequence of (\ref{F0}). Finally, again by (\ref{F0}), the mappings
$$
\Lambda_n:(a,b)\in F\ell^2_\sigma\times F\ell^2_\sigma\mapsto \trace(P_n T(b)K_\mu(a)P_n),\quad n\ge 1
$$
are equi-continuous. Convergence and equi-continuity imply the uniform convergence on compact subsets. 
\end{proof}

We remark that the function $\tau_\mu(a,b)$ is bilinear and continuous in $a,b\in F\ell^2_\sigma$.
Formally $\tau_\mu(a,b)$ equals the trace of $T(b)K_\mu(a)$, though note
the assumptions made in the proposition are not sufficient to insure
$T(b)K_\mu(a)$ is trace class.
Indeed, there exists $a\in F\ell^2_\sigma$ such that $K_\mu(a)$ is not trace class 
(and one can choose $b=1$). Of course, if $K_\mu(a)$ is trace class, we have equality
(and the proposition is a triviality).

Now we turn to the case $1/2<\beta \le 1$,  for which the trace does not converge
to a constant. It provides the second order asymptotics of the $\det M_{\mu,n}(a)$. In terms
of the random matrix interpretation, the asymptotics of the trace gives the
shape of the variance for the corresponding linear statistics.  We begin with  the following estimate.

\begin{lemma} \label{l5.2}
Assume the moment condition {\em (C2)}, and put $\delta=2\sigma=2/\beta\vee 3/\gamma$.
Then for  $a,b\in F \ell^2_{\sigma}$ it holds
\be\label{tr1}
\trace\left( P_n T(b)K_{\mu}(a) P_n\right) =-\frac{1}{2} \sum_{m=-\iy}^\iy  
m^2 b_{-m}a_{m} p_{n,m}^{(\delta)} + {E_1}(a,b;\delta) + o(1), \qquad n\to\iy.
\ee
Here $E_1$ is  constant
and 
\be\label{p.delta}
p_{n,m}^{(\delta)} =  \sideset{}{'} \sum_{2|m|^\delta<\ell \le 2n} h_\mu(\ell),
\ee
where the prime indicates that the summation is taken over all $\ell\in\Z_+$ with the same parity as $m$. The convergence (\ref{tr1}) is uniform in 
$(a,b)$ on compact subsets of $F\ell^2_{\sigma}\times F\ell^2_{\sigma}$. 
\end{lemma}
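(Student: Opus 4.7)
Plan. Start from
$$\trace\bigl(P_n T(b) K_\mu(a) P_n\bigr) = \sum_{j=0}^\infty\sum_{k=0}^{n-1} b_{k-j}a_{j-k}(\rho_{j,k}-1),$$
valid since $T(b)$ is bounded and $K_\mu(a)$ is Hilbert--Schmidt by Proposition~\ref{p3.2}. Split the sum according to whether $(j,k)\in\cI_\delta$. For the off-$\cI_\delta$ part, the bound $|\rho_{j,k}-1|\le 1$ together with the change of variables $m=j-k$, $s=j+k$ exactly as in (\ref{F0}) provides a majorant $C\|a\|_{F\ell^2_\sigma}\|b\|_{F\ell^2_\sigma}$, and dominated convergence yields a finite constant limit $E^{\mathrm{out}}(a,b;\delta)$, with equi-continuity in $(a,b)$ on compact sets.

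On $\cI_\delta$ I apply Lemma~\ref{l3.1}(b) to decompose $\rho_{j,k}-1=-\tfrac12(j-k)^2 h_\mu(j+k)+R_{j,k}$, with the stated three-term bound on $R_{j,k}$. Passing to $m=j-k$, $\ell=j+k$ with $\ell>2|m|^\delta$, each of the three error contributions has an $\ell$-tail that converges (since $2\beta,\gamma,\rho>1$) and produces $|m|$-exponents $4+\delta(1-2\beta)$, $3+\delta(1-\gamma)$, $2+\delta(1-\rho)$, each $\le\delta=2\sigma$ by the standing inequalities $\delta\beta\ge 2$, $\delta\gamma\ge 3$, $\delta\rho\ge 2$. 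Cauchy--Schwarz then yields a summable majorant $C\|a\|_{F\ell^2_\sigma}\|b\|_{F\ell^2_\sigma}$ for $\sum|b_{k-j}a_{j-k}R_{j,k}|$, so by dominated convergence this portion approaches a second constant $E^{\mathrm{err}}(a,b;\delta)$.

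The remaining main term is $-\tfrac12$ times the $(m,\ell)$-sum of $m^2 b_{-m}a_m h_\mu(\ell)$ over equal-parity pairs with $\ell>2|m|^\delta$ and $|m|\le\ell\le 2n-2+m$, while the target $-\tfrac12\sum_m m^2 b_{-m}a_m p_{n,m}^{(\delta)}$ uses the range $(2|m|^\delta,2n]$. The discrepancy lives in an $\ell$-interval of length $O(|m|)$ near $\ell=2n$, where $h_\mu(\ell)=O(n^{-\beta})$, and it is empty once $|m|>Cn^{1/\delta}$; the overall error is therefore at most $Cn^{-\beta}\sum_{|m|\le n^{1/\delta}}|m|^3|b_{-m}a_m|$. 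Proving this is $o(1)$ is the crux of the argument. When $\sigma\ge 3/2$ the series is absolutely convergent by Cauchy--Schwarz and the prefactor $n^{-\beta}$ alone gives $o(1)$. When $\sigma<3/2$ a truncated Cauchy--Schwarz estimate, $\sum_{|m|\le N}|m|^3|b_{-m}a_m|\le N^{3-2\sigma}\|a\|_{F\ell^2_\sigma}\|b\|_{F\ell^2_\sigma}$ applied with $N=n^{1/\delta}$, produces $O(n^{-\beta+3/\delta-1})$; the exponent is strictly negative since $\delta\ge 2/\beta$ and $2/\beta>3/(\beta+1)$ (equivalent to $\beta<2$, which holds). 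Setting $E_1=E^{\mathrm{out}}+E^{\mathrm{err}}$ then yields (\ref{tr1}). Since every bound depends only on the $F\ell^2_\sigma$-norms of $a$ and $b$, the convergence is uniform on compact subsets of $F\ell^2_\sigma\times F\ell^2_\sigma$.
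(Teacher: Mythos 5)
Your proof is correct and follows essentially the same route as the paper's: split the trace along $\cI_\delta$, apply Lemma~\ref{l3.1}(b) on $\cI_\delta$ with the three-term error bound (yielding the constant $E_1$ by dominated convergence), and then compare the actual $(m,\ell)$-range $2|m|^\delta<\ell<2n+m$ with the target range in $p_{n,m}^{(\delta)}$. The only difference is that you compress the paper's four-case estimate of the discrepancy $s_{n,m}$ into the single observation that it is supported on $|m|\lesssim n^{1/\delta}$, sits in an $\ell$-interval of length $O(|m|)$ near $2n$ where $h_\mu=O(n^{-\beta})$, and then absorb the resulting $|m|^3$ via a truncated Cauchy--Schwarz; this is a mild reorganization of the same underlying calculation (the paper also uses $m<n^{1/\delta}$ to convert $m^3/n^\beta$ into $m^\delta/n^\varepsilon$), and your sign-of-$m$ and parity bookkeeping checks out.
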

\begin{proof}
As in the previous lemma, the operator $K_\mu(a)$ is Hilbert-Schmidt and  the trace evaluates to 
$$
\trace\left( P_n T(b)K_{\mu}(a) P_n\right) =\sum_{j=0}^\iy\sum_{k=0}^{n-1} b_{k-j}a_{j-k}(\rho_{j,k}-1).
$$
We can split the double series into 
\be\label{F1}
\sum_{(j,k)\notin \cI_\delta \atop k<n} b_{k-j}a_{j-k}(\rho_{j,k}-1)\quad \mbox{ and } \quad
\sum_{(j,k)\in \cI_\delta \atop k<n} b_{k-j}a_{j-k}(\rho_{j,k}-1),
\ee
where the first term is dominated by
$$
\sum_{(j,k)\notin \cI_\delta} |b_{k-j}a_{j-k}|\le C\|a\|_{F\ell^2_{\sigma}}\|b\|_{F\ell^2_{\sigma}}.
$$
Consequently, the first term in (\ref{F1}) converges as $n\to\iy$  to the constant
\bq
\sum_{(j,k)\notin\cI_\delta} b_{k-j}a_{j-k}(\rho_{j,k}-1),
\label{E1ab}
\eq
and using equi-continuity we see that the  convergence is uniform on compact subsets.

For the second term in (\ref{F1}) we bring in the estimate of Lemma \ref{l3.1}(b),
$$
\rho_{j,k} = 1 - \frac{m^2}{2}h_\mu(\ell) + O \left( \frac{m^4}{\ell^{2 \beta}} \vee
\frac{|m|^3}{\ell^{\gamma}}\vee \frac{m^2}{\ell^{\rho}}\right),
\qquad (j,k)\in\cI_\delta,
$$
together with the substitution $\ell=j+k$, $m=j-k$. As to the applicability of this lemma, note that
$\delta\rho>\delta\ge2/\beta\ge2$. Hence the second term in (\ref{F1}) equals
\be\label{E12}
-\sum_{(j,k)\in \cI_\delta \atop k<n} 
b_{-m}a_{m} \frac{m^2}{2} h_\mu(\ell)
+ \sum_{(j,k)\in \cI_\delta \atop k<n} 
b_{-m}a_{m} 
 O \left( \frac{m^4}{\ell^{2 \beta}} \vee
\frac{|m|^3}{\ell^{\gamma}}\vee \frac{m^2}{\ell^{\rho}}\right).
\ee
The error term here can be overestimated by a constant multiple of
$$
 \sum_{m\in\Z} |b_{-m}a_m| \cdot  ( |m|^{4+\delta(1-2\beta)} \vee  |m|^{3+\delta(1-\gamma)} \vee  |m|^{2+\delta(1-\rho)} )
\le  \|a\|_{F\ell^2_\sigma}\|b\|_{F\ell^2_\sigma}.
$$
Here, we first converted the sum over $(j,k)$ to that over $(m,\ell) \in \Z \times \Z_{+}$ restricted to $2 |m|^{\delta} < \ell$ and then summed over 
the $\ell$ variable.  After this one notes that our conditions imply that the exponents
$4+\delta(1-2\beta)$, $3+\delta(1-\gamma)$, and $2+\delta(1-\rho)$ are all less than $\delta = 2 \sigma$.
In other words, the error  in (\ref{E12}) is dominated by
a corresponding absolutely convergent series. As such it  converges to the  constant 
\bq
\sum_{(j,k)\in\cI_\delta}b_{-m}a_{m}\left(\rho_{j,k}-1+\frac{m^2}{2}h_\mu(\ell)\right)
\label{E2ab}
\eq
as $n\to\infty$. 
In fact, the convergence is uniform on compact subsets of $F\ell^2_\sigma\times F\ell^2_\sigma$, 
which can be most  easily seen by equi-continuity.  In view of what follows, the constant $E_1(a,b, \delta)$ is now identified as the sum of (\ref{E1ab}) and (\ref{E2ab}).

Turning to the first term in (\ref{E12}), the summation expressed in terms of $(m,\ell)\in
\Z\times \Z_+$ is over all indices such that 
$ \ell<2n+m,$ $2|m|^\delta< \ell, $
and such that the parity of $\ell$ and $m$ is the same.
That is,  what we have for the leading order is 
\be\label{t1}
\sum_{m=-\iy}^\iy b_{-m}a_m\frac{m^2}{2}
\left( \sideset{}{'}\sum_{2|m|^\delta<\ell < 2n+m} 
h_\mu(\ell)\right)
\ee
while 
\be\label{t2}
\sum_{m=-\iy}^\iy b_{-m}a_m \frac{m^2}{2}
\left( \sideset{}{'} \sum_{2|m|^\delta<\ell \le  2n} h_\mu(\ell)\right)
\ee
is what is claimed in (\ref{tr1}).

We next show that 
\be
s_{n,m}:=
 \sideset{}{'}\sum_{2|m|^\delta<\ell < 2n+m} m^2 h_\mu(\ell)-
 \sideset{}{'} \sum_{2|m|^\delta<\ell \le 2n} m^2h_\mu(\ell)=O\left(\frac{|m|^\delta}{n^{\eps}}\vee \frac{|m|^\delta}{n^\beta}\right),
\label{snm}
\ee
as $n\to\iy$, uniformly in $m$, where $\eps =\beta+1-3/\delta>0$.   This will imply that the difference between
(\ref{t1}) and (\ref{t2}) converges (uniformly) to zero as $n\to\iy$.

To see (\ref{snm}) we distinguish four cases:
\begin{enumerate}
\item
$m>0$ and $2|m|^\delta<2n$. Then $s_{n,m}=O(m^3/n^\beta)$. Since $m<n^{1/\delta}$ we have
$$
\frac{m^3}{n^\beta}\le \frac{m^{\delta} n^{(3-\delta)/\delta}}{n^\beta}=\frac{m^\delta}{n^\eps}
$$
in case $\delta<3$, while the bound is $m^\delta/n^\beta$ in the case $\delta\ge3$.
\item
$m>0$ and $2n\le 2 |m|^\delta$. Then $s_{n,m}=O(m^3/m^{\beta\delta})$, and since  $m\ge n^{1/\delta}$,
we have 
$$\frac{m^3}{m^{\beta\delta}}=\frac{m^\delta}{m^{\beta\delta+\delta-3}}\le 
\frac{m^\delta}{n^{\beta+1-3/\delta}}=\frac{m^\delta}{n^\eps}.
$$
\item
$m<0$ and $2|m|^\delta<2n+m$. Then $s_{n,m}=O(|m|^3/(2n-|m|)^\beta)$, $|m|<(n-|m|/2)^{1/\delta}\le n^{1/\delta}$,
and we have
$$
\frac{|m|^3}{(n-|m|/2)^\beta}\le \frac{|m|^\delta(n-|m|/2)^{(3-\delta)/\delta}}{(n-|m|/2)^{\beta}}
=\frac{|m|^\delta}{(n-|m|/2)^{\eps}}\le
\frac{|m|^\delta}{(n-n^{1/\delta}/2)^{\eps}}
$$
in case $\delta<3$, or  $|m|^\delta/(n-n^{1/\delta}/2)^\beta$ in the case $\delta\ge3$.
\item
$m<0$ and $2n+m\le 2|m|^\delta$. Then $s_{n,m}=O(|m|^3/|m|^{\beta\delta})$,
$n\le |m|^\delta+|m|/2\le 2|m|^\delta$, and
$$
\frac{|m|^3}{|m|^{\beta\delta}}=\frac{|m|^\delta}{|m|^{\beta\delta+\delta-3}}\le C \frac{|m|^\delta}{n^{\beta+1-3/\delta}}=C\frac{|m|^\delta}{n^\eps}.
$$
\end{enumerate}
From here it follows that difference of (\ref{t1}) and (\ref{t2}) is bounded by a constant multiple of
$
{n^{-\eps \wedge \beta }} \|a\|_{F\ell^2_\sigma}\|b\|_{F\ell^2_\sigma},
$
and  the indicated convergence is uniform in $(a,b)$ even on bounded subsets of $F\ell^2_\sigma\times F\ell^2_\sigma$. 
The proof is finished.
\end{proof}


Next we estimate the leading term from the previous lemma.

\begin{lemma}\label{l5.3}
Assume the moment assumption {\em (C2)}, and define $p_{n,m}^{(\delta)}$ for $\delta>1$ by (\ref{p.delta}). 
\begin{itemize}
\item[(i)]
If $c\in W=F\ell^1$, then 
\be\label{tr.as1}
\sum_{m=-\iy}^\iy c_m p_{n,m}^{(\delta)}=
\iota_\mu(2n) \sum_{m=-\iy}^\iy c_m +o(\iota_\mu(2n)),\qquad n\to\iy.
\ee
\item[(ii)]
If $c\in F\ell^1(\hat{\nu})$ with $\hat{\nu}_m=1+\iota_\mu(2|m|^\delta)$, then, with some constant $E_2$,
\be\label{tr.as2}
\sum_{m=-\iy}^\iy c_m p_{n,m}^{(\delta)}= \iota_\mu(2n) \sum_{m=-\iy}^\iy c_m  +  E_2(c; \delta) +o(1),
\qquad n\to \iy.
\ee
\end{itemize}
The convergence holds uniformly in $c$ on compact subsets of $W$ and $ F\ell^2(\hat{\nu})$, respectively.
\end{lemma}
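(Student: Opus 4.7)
\textbf{Proof plan for Lemma \ref{l5.3}.} The engine for both parts is a uniform Euler--Maclaurin comparison of the parity-restricted sum $p_{n,m}^{(\delta)}$ with the corresponding integral. For integers $a<b$ with a fixed parity class,
$$
\sideset{}{'}\sum_{a<\ell\le b,\ \ell\equiv m(2)} h_\mu(\ell) = \tfrac{1}{2}\int_a^b h_\mu(\xi)\,d\xi + O\Bigl(h_\mu(a)+h_\mu(b)+\int_a^b|h_\mu'(\xi)|\,d\xi\Bigr).
$$
Under (C2), $h_\mu(\xi)=O(\xi^{-\beta})$ and $h_\mu'(\xi)=O(\xi^{-\gamma})$ with $\gamma>1$, so the remainder is bounded by a constant \emph{uniformly} in $a,b,m$, and the tail bound $\int_a^\infty|h_\mu'|=O(a^{1-\gamma})$ forces it to converge as $a,b\to\infty$. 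Taking $a=2|m|^\delta$ and $b=2n$ (with $p_{n,m}^{(\delta)}=0$ when $|m|^\delta>n$) yields
$$
p_{n,m}^{(\delta)} = \iota_\mu(2n) - \iota_\mu(2|m|^\delta) + \varepsilon_{n,m},
$$
where $|\varepsilon_{n,m}|\le C$ uniformly and, for each fixed $m$, $\varepsilon_{n,m}\to\varepsilon_{\infty,m}$ as $n\to\infty$ with $|\varepsilon_{\infty,m}|\le C$.

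For part (i), divide through by $\iota_\mu(2n)$:
$$
\frac{\sum_m c_m\,p_{n,m}^{(\delta)}}{\iota_\mu(2n)} = \sum_m c_m\cdot\frac{p_{n,m}^{(\delta)}}{\iota_\mu(2n)}.
$$
Since $\iota_\mu(2n)\to\infty$, the ratio $p_{n,m}^{(\delta)}/\iota_\mu(2n)$ tends to $1$ for each fixed $m$, and by the uniform bound above it lies between $0$ and $1+O(1/\iota_\mu(2n))$. Since $c\in W=\ell^1$, dominated convergence yields $\sum_m c_m$, which is (\ref{tr.as1}).

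For part (ii) I isolate the finer correction by splitting according to whether $|m|^\delta\le n$:
$$
\sum_m c_m\,p_{n,m}^{(\delta)} - \iota_\mu(2n)\sum_m c_m = \sum_{|m|^\delta\le n}c_m\bigl[-\iota_\mu(2|m|^\delta)+\varepsilon_{n,m}\bigr] - \iota_\mu(2n)\sum_{|m|^\delta>n}c_m.
$$
In the last sum, the inequality $\iota_\mu(2n)\le\iota_\mu(2|m|^\delta)$ on the range of summation bounds it by the tail $\sum_{|m|^\delta>n}|c_m|\,\iota_\mu(2|m|^\delta)$, which is the tail of $\sum_m|c_m|\hat{\nu}_m<\infty$ and hence $o(1)$. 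In the first sum, the dominating sequence $|c_m|(\iota_\mu(2|m|^\delta)+C)$ lies in $\ell^1$ by the same norm, so dominated convergence produces the limit
$$
E_2(c;\delta) = -\sum_m c_m\,\iota_\mu(2|m|^\delta) + \sum_m c_m\,\varepsilon_{\infty,m},
$$
both series absolutely convergent.

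Finally, the uniform-on-compacts statement follows because the linear maps in question---$c\mapsto \sum c_m p_{n,m}^{(\delta)}/\iota_\mu(2n)$ for part (i) and $c\mapsto \sum c_m p_{n,m}^{(\delta)}-\iota_\mu(2n)\sum c_m$ for part (ii)---are equi-bounded (by $C\|c\|_W$ and $C\|c\|_{F\ell^1(\hat{\nu})}$ respectively, again thanks to the uniform representation of $p_{n,m}^{(\delta)}$); pointwise convergence of equi-continuous linear maps upgrades to uniform convergence on totally bounded sets. The main obstacle is the Euler--Maclaurin step: one must obtain the $O(1)$ remainder uniformly across the entire range $0\le|m|^\delta\le n$, and control its pointwise limit---this is exactly where the hypothesis $\gamma>1$ (integrability of $h_\mu'$ at infinity) is essential, and without it the remainder would not be summable against $c\in F\ell^1(\hat{\nu})$.
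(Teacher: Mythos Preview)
Your proof is correct and follows essentially the same approach as the paper's. The only packaging difference is that the paper introduces the auxiliary parity sums $s_\mu^\pm(x)=\sum_{1\le\ell\le x,\ (-1)^\ell=\pm1}h_\mu(\ell)$, proves $s_\mu^\pm(x)=\iota_\mu(x)+C_\pm+o(1)$ (your Euler--Maclaurin step), and then writes $p_{n,m}^{(\delta)}=\max\{0,s_\mu^\pm(2n)-s_\mu^\pm(2|m|^\delta)\}$, using the identity $\max\{0,A-B\}=A-\min\{A,B\}$ to avoid your explicit split into $|m|^\delta\le n$ versus $|m|^\delta>n$; the dominated-convergence and equi-continuity arguments are then identical to yours.
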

\begin{proof}
First set 
$$
s_\mu^\pm (x)=  \sum_{{1\le  \ell\le x}\atop (-1)^\ell =\pm 1} h_\mu(\ell).
$$
Standard estimates  using the assumptions on $h_{\mu}$  and the fact that the functions $s_\mu^\pm(x)$ are increasing gives
$s_\mu^\pm(x)= \iota_\mu(x)+C_\pm+o(1)$ as $x\to\iy$ for constants $C_\pm$.
Granted this, for either  
point  (i) or (ii), we split the sum over even and odd indices.
In particular,
\bq
\sum_{m\ \mathrm{even}} c_m p_{n,m}^{(\delta)}
&=& 
\sum_{m\ \mathrm{even}}  c_m \max\left\{ 0,s^+_\mu(2n)-s^+_\mu(2|m|^\delta)\right\} \nn \\
&=&
s_\mu^+(2n) \sum_{m\ \mathrm{even}}  c_m - \sum_{m\ \mathrm{even}} c_m \min \left\{ s^+_\mu(2n),s^+_\mu(2|m|^\delta)\right\}. \nn
\eq
The first term on the right hand side gives one half of the leading asymptotics.
Next we show that for part (i), the second term is $o(s_\mu^+(2n))$, while for part (ii)
the second term is a constant plus $o(1)$. 

Indeed, for part (i), we write the second term as
$$
s_\mu^+(2n)\sum_{m\ \mathrm{even}}  c_m \min\left\{1,\frac{s_\mu^+(2|m|^\delta)}{s_\mu^+(2n)}\right\}.
$$
This renormalized series is dominated by the series  $ \sum |c_m|$. Moreover, for each fixed $m$, the minimum converges to zero as $n\to\iy$. Dominated convergence then implies that the series is $o(1)$ as $n\to\iy$.
Similar considerations can be carried out for the odd term,
concluding the proof of part (i).

As for part (ii), take again the even terms:
$$
\sum_{m\ \mathrm{even}} c_m \min \left\{ s^+_\mu(2n),s^+_\mu(2|m|^\delta)\right\}.
$$
This sum is now dominated by (a constant times)
\be\label{extra.cond}
\sum_{m=-\iy}^\iy |c_m|\left(1+ \iota_\mu(2|m|^\delta)\right)<\iy,
\ee
while for each fixed
$m$, the minimum converges to $s_\mu^+(2|m|^\delta)$ as $n\to\iy$. So dominated convergence yields that the above equals
$$
\sum_{m\ \mathrm{even}} c_m s^+_\mu(2|m|^\delta)+o(1).
$$
The terms involving the summation over odd $m$ give a similar contribution, and collecting everything
we arrive at, in case (ii):
\bq
\lefteqn{\sum c_m p_{n,m}^{(\delta)}} \nn \\
& = &
\sum_{m\ \mathrm{even}} c_m  \left( s_\mu^+(2n) -  s_\mu^+(2|m|^\delta) \right) +
\sum_{m\ \mathrm{odd}} c_m   \left(  s_\mu^-(2n) -s_\mu^-(2|m|^\delta)  \right) +o(1). \nn
\eq
From here the constant
$$
E_2(c;\delta) =
C_+\sum_{m\ \mathrm{even}} c_m +
C_-\sum_{m\ \mathrm{odd}} c_m 
 -\sum_{m=-\infty}^{\infty} c_m  \sideset{}{'}\sum_{1\le \ell \le 2 |m|^{\delta}} h_{\mu} (\ell)
$$
is identified.
The uniform convergence on compacts is seen by using the equi-continuity of the corresponding mappings. 
\end{proof}

We now combine the previous two lemmas into the following theorem. Notice that part (i) will be used to prove Theorem \ref{thm1},
while part (ii) is used to show Theorem \ref{thm3}(a).

\begin{theorem}\label{t5.4}
Assume the moment condition {\em (C2)}, and put $\sigma=1/\beta\vee 3/(2\gamma)$.

\begin{itemize}
\item[(i)]
If $a,b\in F\ell^2_\sigma$, then 
\be\label{d.0}
\trace( P_n T(b) K_{\mu}(a) P_n) =     \Omega(a,b) \cdot \iota_\mu(2n)  +o(\iota_\mu(2n)),
\qquad n\to\iy,
\ee
where 
$$
\Omega(a,b)= -\frac{1}{2}\sum_{m=-\iy}^\iy m^2 a_m b_{-m}=- \frac{1}{4 \pi}\int_{0}^{2\pi} a'(e^{it})b'(e^{it}) dt, 
$$
and the convergence (\ref{d.0}) is uniform in $(a, b)$ on compact subsets of $F \ell_{\sigma}^2\times F \ell_{\sigma}^2$.
\item[(ii)] Let $B=F\ell^2_\sigma\cap F\ell^2(\nu)$ with $\nu_m=\sqrt{1+ m^2\iota_\mu(2|m|^{2\sigma})}$.
Then, for 
$a, b\in B $,
\be\label{d.1b}
\trace( P_n T(b) K_{\mu}(a) P_n) =     \Omega(a, b) \cdot \iota_\mu(2n)  + C_\mu(a, b)+o(1),
\qquad n\to\iy,
\ee
with a certain constant $C_{\mu}(a,b)$. The convergence   (\ref{d.0}) is uniform in $(a, b)$ on compact subsets of $B\times B$.
\end{itemize}
\end{theorem}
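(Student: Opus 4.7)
The plan is to chain Lemmas \ref{l5.2} and \ref{l5.3}: the former reduces the trace to a single series in $m$ with weights $p_{n,m}^{(\delta)}$, and the latter extracts the leading $\iota_\mu(2n)$-behavior of that series. With $\delta=2\sigma=2/\beta\vee 3/\gamma$ (matching the hypotheses of Lemma \ref{l5.2}), I would define the sequence $c_m:=-\tfrac12 m^2 a_m b_{-m}$ and apply Lemma \ref{l5.3} to it, identifying $\sum c_m=\Omega(a,b)$. The identity $\Omega(a,b)=-\tfrac{1}{4\pi}\int a'(e^{it})b'(e^{it})\,dt$ is a routine Parseval computation using $a'(e^{it})=\sum_m im\, a_m e^{imt}$.

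For part (i) the only serious point is verifying $c\in W=F\ell^1$ when $a,b\in F\ell^2_\sigma$. Since the moment condition (C2) forces $\beta\le 1$ and $\gamma>1$, one has $\sigma=1/\beta\vee 3/(2\gamma)\ge 1$. Splitting $m^2=m^\sigma\cdot m^{2-\sigma}$ and applying Cauchy--Schwarz yields
\[
\sum_m m^2|a_m b_{-m}|\ \le\ \Big(\sum_m m^{2\sigma}|a_m|^2\Big)^{1/2}\Big(\sum_m m^{2(2-\sigma)}|b_{-m}|^2\Big)^{1/2}\ \le\ \|a\|_{F\ell^2_\sigma}\|b\|_{F\ell^2_\sigma},
\]
where the last step uses $2-\sigma\le \sigma$. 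Lemma \ref{l5.3}(i) then produces $\iota_\mu(2n)\sum c_m+o(\iota_\mu(2n))=\Omega(a,b)\iota_\mu(2n)+o(\iota_\mu(2n))$, and the residual $E_1(a,b;\delta)+o(1)$ coming from Lemma \ref{l5.2} is absorbed into the error since $\iota_\mu(2n)\to\infty$.

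For part (ii) the task is to verify $c\in F\ell^1(\hat\nu)$ where $\hat\nu_m=1+\iota_\mu(2|m|^{2\sigma})$. Writing $\nu_m^2=1+m^2\iota_\mu(2|m|^{2\sigma})$, one has $m^2\hat\nu_m = m^2 + m^2\iota_\mu(2|m|^{2\sigma}) = m^2+\nu_m^2-1$, so
\[
\sum_m m^2\hat\nu_m|a_m b_{-m}|\ \le\ \sum_m m^2|a_m b_{-m}|\ +\ \sum_m \nu_m^2|a_m b_{-m}|,
\]
and each piece is bounded by Cauchy--Schwarz by $\|a\|_{F\ell^2_\sigma}\|b\|_{F\ell^2_\sigma}$ and $\|a\|_{F\ell^2(\nu)}\|b\|_{F\ell^2(\nu)}$ respectively. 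Lemma \ref{l5.3}(ii) then gives $\Omega(a,b)\iota_\mu(2n)+E_2(c;\delta)+o(1)$, and adding the constant $E_1(a,b;\delta)$ from Lemma \ref{l5.2} defines $C_\mu(a,b)$.

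The uniform convergence on compact subsets of $F\ell^2_\sigma\times F\ell^2_\sigma$ (resp.\ $B\times B$) follows automatically, because both auxiliary lemmas provide uniform convergence on compacts, and the Cauchy--Schwarz estimates above show that the bilinear forms $(a,b)\mapsto \Omega(a,b)$ and $(a,b)\mapsto c$ are (jointly) continuous into $W$ and $F\ell^1(\hat\nu)$ respectively. The only place needing care is bookkeeping the two error constants $E_1,E_2$ so that they combine into a single continuous functional $C_\mu(a,b)$; this is mostly arithmetic but will be the delicate step to present cleanly.
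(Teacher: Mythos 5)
Your proposal matches the paper's own proof essentially step for step: both apply Lemma~\ref{l5.2} to reduce the trace to a single series weighted by $p_{n,m}^{(\delta)}$ with $\delta=2\sigma$, then feed $c_m=-\tfrac12 m^2 a_m b_{-m}$ into Lemma~\ref{l5.3}(i) (resp.~(ii)), verifying $c\in W$ (resp.~$c\in F\ell^1(\hat\nu)$) by the same Cauchy--Schwarz arguments, and then absorb (resp.\ retain) the constant $E_1$ from Lemma~\ref{l5.2}. The only difference is cosmetic: the paper records the slightly sharper inclusion $c\in F\ell^1_{2\sigma-2}\subseteq W$ for part~(i), and it writes out the resulting constant $C_\mu(a,b)$ explicitly (as a combination of $E_1$ and $E_2$, formula~(\ref{f.54})) rather than just asserting its existence.
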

\begin{proof}
(i):\ We employ Lemma \ref{l5.2} and Lemma \ref{l5.3}(i) with $c_m=m^2 b_{-m} a_m$ and $\delta=2\sigma$. Since $\sigma\ge 1/\beta\ge 1$, we obtain from Cauchy-Schwartz that $c\in F\ell^1_{2\sigma-2}\subseteq W$. Hence
$$
\trace( P_n T(b) K_{\mu}(a) P_n) = -\frac{\iota_\mu(2n)}{2}\sum_{m=-\iy}^\iy c_m+o(\iota_\mu(2n)),\qquad n\to\iy,
$$
with the convergence being uniform in $a,b$ on compact subsets of $F\ell^2_\sigma$. The computation of the constant $\Omega(a,b)$ is straightforward.

(ii):\  Lemma \ref{l5.2} is applied without any change. This produces the constant factor $E_1$ 
which could be neglected in case (i).
Lemma \ref{l5.3}(ii) is now applicable because $a,b\in F\ell^2(\nu)$ along with Cauchy-Schwartz implies that $c\in F\ell^1(\hat{\nu})$.
We thus obtain the asymptotics (\ref{tr.as2}).
Combined with Lemma \ref{l5.2} we arrive at  (\ref{d.1b}) with the overall constant evaluated from
$E_1$ and $E_2$,
\bq
 C_{\mu}(a,b) & = & \sum_{j,k=0}^\iy b_{k-j}a_{j-k}\left(\rho_{j,k}-1+\frac{(j-k)^2}{2}h_\mu(j+k)\right)\nn \\
&  & 
-\frac{C_+}{2}\sum_{m\ \mathrm{even}} m^2 a_m b_{-m}-\frac{C_-}{2}\sum_{m\ \mathrm{odd}} m^2 a_m b_{-m}
\label{f.54}.
\eq
The constant $C_\pm$ were defined at the beginning of the proof of Lemma \ref{l5.3}.
The absolute convergence of the above series is, among other things, guaranteed by estimates on $a_m$ and $b_m$ that
follow from the choice of $B$.
\end{proof}


\section{Limit theorems: the case $\boldsymbol{\beta>1}$ (C1)}
\label{s6}

We  are now going to give the proof of the main results stated in the introduction in the cases where the moment condition (C1) is fulfilled, {\em i.e.}, $\beta>1$. 

As already pointed out at the end of Section \ref{s4}, we can proceed in two ways, by using either Theorem \ref{mainthm} (a) or (b) depending whether we have the trace class {\bf (TC)} or Hilbert-Schmidt {\bf (HS)} condition available. Sufficient criteria for these condition to hold
are identified in Propositions \ref{p3.2} and \ref{p3.3}. We start with the proof of Theorem \ref{thm3}(b).

Let us first proceed the simpler way.
Put $ B=F\ell^1_{\sigma}$, or $B=F\ell^2_{\sigma+\epsilon}$, $\epsilon>0$ with $\sigma = 1\vee 2/\beta$. Then Proposition \ref{p3.3} implies that $B$ satisfies the trace class condition  {\bf (TC)},
and   Proposition \ref{p4.1} shows that the Banach algebra $B$ is suitable.
Now  apply Theorem \ref{mainthm}(a) in order to get (\ref{con.2}) in Theorem \ref{thm3}(b). In particular, we obtain the correct identification of the constant $E[a]$ as
a well-defined operator determinant. As for the constant $G[a]$, see Proposition \ref{p4.2}(i) and (iii),
noting that (because $B\subset C(\T)$) formula (\ref{Ga1}) reduces to (\ref{Ga}).

Proceeding the other way, put $B=L^\iy(\T)\cap F\ell^2_{1/2}$ ($\beta\ge 2$) or
$B=F\ell^2_{1/\beta}$ ($1<\beta<2$). Again suitability of $B$ is guaranteed by Proposition \ref{p4.1},
and Proposition \ref{p3.2} implies  {\bf (HS)}. Now we can use 
Theorem \ref{mainthm}(b), and we are left with the asymptotics of the trace, which is settled by
Proposition \ref{p5.1}. We obtain the same convergence (\ref{con.2}) in Theorem \ref{thm3}(b)
under the stated (more general) conditions,  but the
constant $E[a]$ must be identified as
$$
E[a]=e^{\tau_\mu(a,a\iv)}\det\Big( T(a\iv)M_\mu(a) e^{-T(a\iv)K_\mu(a)}\Big).
$$
Clearly, if $a$ satisfies the stronger conditions, then both expressions for $E[a]$ coincide (see also the remark after Proposition \ref{p5.1}).
This concludes the proof of Theorem \ref{thm3}(b).

\medskip
For our random matrix application (Theorem \ref{thm2}), the behavior of the  (centered) linear statistic $X_{f,n} - n f_0=X_{f-f_0,n}$ is accessed
through considering symbols $a_{\lambda} = e^{i \lambda (f-f_0)}$. Notice that  Proposition \ref{p4.2}(iii)
implies $a_{\lambda}\in \Phi(B)$ and $G[a_{\lambda}]=1$.
Applying what we have just proved (Theorem \ref{thm3}(b))  and (\ref{detform}) we immediately obtain
\be\label{E.conv}
  \lim_{n \to \infty}  \E_{\m,n} \left[ e^{ i \lambda (X_{f,n} - n f_0) } \right]  = 
  E(f, \lambda)
\ee
with
\be\label{Ela}
E(f, \lambda):= e^{\tau_\mu(a_\lambda\iv,a_\lambda)}\det\Big( T(a_\lambda\iv) M_\mu(a_\lambda)e^{-T(a_\lambda\iv)K_\mu(a_\lambda)}\Big)
\ee
under the conditions stated in Theorem \ref{thm2}(a). The convergence (\ref{E.conv})
is locally uniform in $\lambda$.  Hence $E(f,\lambda)$ is analytic in $\lambda$ and $E(f,0)=1$.
This implies that $E(f,\lambda)$  is a proper moment generating  function, and hence 
 $X_{f,n} - n f_0$ converges in distribution to some random variable $\mathcal{Z}$. That $\mathcal{Z}$
 has mean zero can be seen by differentiating (\ref{E.conv}) and putting $\lambda=0$
This concludes the first part  of Theorem \ref{thm2}. 

Notice that under the stronger conditions, the constant simplifies to
\be\label{Ela2}
   E(f, \lambda)=\det\Big( T(e^{-i\lambda(f-f_0)}) M_\mu(e^{i\lambda(f-f_0)})\Big)=
   \det\Big( T(e^{-i\lambda f}) M_\mu(e^{i\lambda f})\Big).
\ee
 What exactly $\mathcal{Z}$  is though is hard to understand from (\ref{Ela}) or (\ref{Ela2}). 
The following is the best we have; it completes the proof of Theorem \ref{thm2}.

\begin{proposition}\label{p6.5} 
Let $\beta>1$, $\sigma=1\vee 2/\beta$ and assume either $b\in F\ell^1_\sigma$ or $b\in F\ell^2_{\sigma+\varepsilon}$, $\eps>0$. Then there exists $\delta>0$ such that for $\lambda\in \C$ with $|\lambda|<\delta$ it holds that
\bq\label{Ea}
  \det ( T(e^{-\lambda b} ) M_{\mu}(e^{\lambda b}) )  &=&\exp\Big( \frac{\lambda^2}{2} \trace( H(b) H(\tb))+ \sum_{n=2}^\iy \frac{\lambda^n}{n !} \trace(B_n) \Big),
\eq
where the (trace class) operators $B_n$ are defined by the recursion
$$
B_{n+1}=M_{\mu}(b^{n+1})-\sum_{k=1}^n{ n \choose k} B_{n+1-k} M_{\mu}(b^k),\qquad n\ge 0.
$$
\end{proposition}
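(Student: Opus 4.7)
\medskip
\noindent The plan is to factor $A(\lambda) := T(e^{-\lambda b}) M_\mu(e^{\lambda b})$ as a product $A(\lambda) = S(\lambda) P(\lambda)$, where
\[
S(\lambda) := T(e^{-\lambda b}) T(e^{\lambda b}), \qquad P(\lambda) := T(e^{\lambda b})^{-1} M_\mu(e^{\lambda b}),
\]
and to compute $\log\det S(\lambda)$ and $\log\det P(\lambda)$ separately. First I would verify that both factors are of the form $I+(\text{trace class})$ and analytic in $\lambda$ for $|\lambda|<\delta$: $S(\lambda)-I=-H(e^{-\lambda b})H(\widetilde{e^{\lambda b}})$ is trace class since $b\in\K$ makes each Hankel Hilbert--Schmidt, while $P(\lambda)-I=T(e^{\lambda b})^{-1}K_\mu(e^{\lambda b})$ is trace class by Proposition~\ref{p3.3}. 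Both Fredholm determinants are then well-defined and equal $1$ at $\lambda=0$.

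For $\log\det S(\lambda)$ I would invoke the classical strong Szeg\H{o}--Widom theorem applied to $a=e^{-\lambda b}$, namely $\det T(a)T(a^{-1})=\exp\bigl(\sum_k k\,[\log a]_k[\log a]_{-k}\bigr)$, which gives
\[
\log\det S(\lambda)=\lambda^2\sum_{k\ge 1} k\,b_k b_{-k}=\lambda^2\trace\bigl(H(b)H(\tb)\bigr).
\]

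For $\log\det P(\lambda)$ I would reinterpret the recursion defining $B_n$ as the operator-valued moment-cumulant identity: setting $\mathcal M(\lambda)=M_\mu(e^{\lambda b})$ and $\mathcal B(\lambda)=\sum_{n\ge 1}\lambda^n B_n/n!$, the recursion is equivalent to $\mathcal M'(\lambda)=\mathcal B'(\lambda)\mathcal M(\lambda)$, and analogously $\tilde{\mathcal M}'=\tilde{\mathcal B}'\tilde{\mathcal M}$ for the Toeplitz version $\tilde{\mathcal M}(\lambda)=T(e^{\lambda b})$. A direct computation then yields
\[
P(\lambda)^{-1}P'(\lambda)=\mathcal M(\lambda)^{-1}\bigl(\mathcal B'(\lambda)-\tilde{\mathcal B}'(\lambda)\bigr)\mathcal M(\lambda).
\]
An induction on the recursion (using $T(ab)-T(a)T(b)=-H(a)H(\tb)$) will show that $B_n$ is trace class for $n\ge 2$. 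Expanding $M_\mu(e^{\lambda b})^{-1}$ as a Neumann series in $K_\mu(e^{\lambda b})$, the bracket $\mathcal B'-\tilde{\mathcal B}'$ will be seen to be trace class: its constant term is $K_\mu(b)$ (which has zero trace since $\rho_{j,j}=1$), and the $\lambda$-positive terms are trace class individually. Cyclicity then gives $\frac{d}{d\lambda}\log\det P(\lambda)=\trace(\mathcal B'-\tilde{\mathcal B}')$. Using $\tilde B_2=H(b)H(\tb)$ and, by strong Szeg\H{o}, $\trace\tilde B_n=0$ for $n\ge 3$, integrating from $0$ produces
\[
\log\det P(\lambda)=-\frac{\lambda^2}{2}\trace\bigl(H(b)H(\tb)\bigr)+\sum_{n\ge 2}\frac{\lambda^n}{n!}\trace B_n,
\]
and adding this to $\log\det S(\lambda)$ yields (\ref{Ea}).

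The main obstacle will be the cancellation analysis: $\mathcal B'(\lambda)$ and $\tilde{\mathcal B}'(\lambda)$ individually contain non-trace-class constant terms $M_\mu(b)$ and $T(b)$ whose formally divergent traces cancel in the difference, and invoking strong Szeg\H{o} to kill the higher-order Toeplitz cumulants $\tilde B_n$, $n\ge 3$, is essential to obtain the clean expression in $(B_n)_{n\ge 2}$. The radius $\delta>0$ will come from the trace-norm convergence of the series $\sum \lambda^n B_n/n!$ established inductively via Proposition~\ref{p3.3}.
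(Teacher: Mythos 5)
Your two-factor strategy is conceptually close to the paper's, but there is a meaningful difference in the choice of intermediate operator, and it leaves a gap that the paper's choice deliberately avoids. The paper factors through $e^{\lambda T(b)}$, i.e., sets $E_1(\lambda)=\det T(e^{-\lambda b})\,e^{\lambda T(b)}$ and $E_2(\lambda)=\det e^{-\lambda T(b)}M_\mu(e^{\lambda b})$. The whole point of that choice is that $(e^{-\lambda T(b)})'\,(e^{-\lambda T(b)})^{-1}=-T(b)$ exactly, so $(\log\det E_2)'=\trace\bigl(M_\mu'M_\mu^{-1}-T(b)\bigr)=\trace(\mathcal B'(\lambda)-T(b))$ and the series $\sum_{n\ge 2}\lambda^n\trace B_n/n!$ drops out directly, with no Toeplitz cumulants $\tilde B_n$ appearing at all. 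The first factor $E_1$ is then handled by citing the Pincus/Ehrhardt identity $\det T(e^{-\lambda b})\,e^{\lambda T(b)}=\exp\bigl(\tfrac{\lambda^2}{2}\trace H(b)H(\tb)\bigr)$ from \cite{Eh2,BasorEhr}.

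Your choice $S(\lambda)=T(e^{-\lambda b})T(e^{\lambda b})$, $P(\lambda)=T(e^{\lambda b})^{-1}M_\mu(e^{\lambda b})$ means the log-derivative of $P$ is $\trace(\mathcal B'-\tilde{\mathcal B}')$, producing $\sum_{n\ge 2}\lambda^n\bigl(\trace B_n-\trace\tilde B_n\bigr)/n!$. To recover the target formula you must know $\trace\tilde B_2=\trace H(b)H(\tb)$ (fine, by direct computation $\tilde B_2=H(b)H(\tb)$) and $\trace\tilde B_n=0$ for $n\ge 3$. The latter you attribute to ``strong Szeg\H o,'' but this is precisely where the gap lies. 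The Szeg\H o--Widom constant gives $\det T(e^{-\lambda b})T(e^{\lambda b})=\exp\bigl(\lambda^2\trace H(b)H(\tb)\bigr)$, which is the value of $\det S(\lambda)$; it does not tell you how this splits between the two sub-determinants $\det T(e^{-\lambda b})\,e^{\lambda T(b)}$ and $\det e^{-\lambda T(b)}T(e^{\lambda b})$. The statement $\sum_{n\ge 2}\lambda^n\trace\tilde B_n/n!=\tfrac{\lambda^2}{2}\trace H(b)H(\tb)$ is equivalent to $\det e^{-\lambda T(b)}T(e^{\lambda b})=\exp\bigl(\tfrac{\lambda^2}{2}\trace H(b)H(\tb)\bigr)$, which is again the Pincus/Ehrhardt identity. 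So your route does not circumvent that citation; it only relocates it, and the justification ``by strong Szeg\H o'' as written is incomplete. (Note also that the traces $\trace\tilde B_n$ cannot be computed termwise in the recursion, since $T(b^k)$ itself is not trace class; one must exploit the Hankel-product cancellations, which is exactly what the cited identity encapsulates.)

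Apart from this, the rest of your proposal is sound: the trace-class verification of $\mathcal B'-\tilde{\mathcal B}'$ via the Neumann series for $M_\mu(e^{\lambda b})^{-1}$ and the vanishing of the constant term $\trace K_\mu(b)=0$ is correct, as is the integration step and the use of cyclicity. Once you cite the Pincus-type identity (or prove $\trace\tilde B_n=0$ for $n\ge 3$ by an independent argument), the proof goes through. On balance the paper's factorization through $e^{\lambda T(b)}$ is cleaner because it makes the second factor's log-derivative trivial and keeps all the ``cumulant'' content in $B_n$ alone.
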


Ahead of  the proof, we write out the first couple $B_n$'s. 
With
$M_k=M_\mu(b^k)$ we obtain $B_1=M_1$,
\bq
B_2 &=& M_2-M_1^2,\label{B2} \nn \\
B_3 &=& M_3-2M_2M_1-M_1M_2+2 M_1^3,  \nn \\
B_4 &=& M_4-3M_3M_1-M_1M_3-2M_2^2+6M_2M_1^2+3M_1M_2M_1+3M_1^2M_2-6M_1^4. \nn
\label{B4}
\eq
When $\mu$ is the unit mass at 1, then $M_{\mu}(b)=T(b)$ and one has
that
$\log \det(T(e^{-\lambda b}) T(e^{\lambda b}))$ equals
$  \lambda^2 \,\trace( H(b)H(\tilde b))$ (according to the Szeg\"o-Widom limit theorem).
That is, we have the above expressions with $M_k$ replaced by
$T_k=T(b^k)$ while at the same time $\trace B_2=\trace(H(b)H(\tilde b))$ and $\trace B_m=0$ for all
$m\ge 3$.  (This means that the cumulants of $\mathcal{Z}$ of order three and higher are vanishing.)  Back in the general case, we can subtract from the $B_k$ given by the above formulas
the corresponding expressions for the special case $M_k=T_k$ and then  take traces.
Substituting $M_k=T_k+K_k$ with $K_k=K(b^k)$, yields
\bq
\trace(B_2) &=& \trace(H(b) H(\tilde{b})) - \trace(2 T_1K_1+K_1^2), \nn
\\
\trace(B_3) &=& -3\,\trace(K_2T_1+K_1T_2+K_2K_1) + 2\,\trace(3K^2_1T_1+3K_1T_1^2+K_1^3), \nn  \\
\trace(B_4) &=& -4\,\trace(T_3 K_1+K_3 T_1+K_3K_1)-2\,\trace(2T_2K_2+K_2^2)\nn\\
&&\mbox{}+12\,\trace(T_2T_1K_1+T_2K_1T_1+T_2K_1^2+K_2T_1^2+K_2T_1K_1+K_2K_1T_1+K_2K_1^2)\nn\\
&&\mbox{}-6\,\trace(4T_1^3K_1+4T_1^4K_1^2+2T_1K_1T_1K_1+4T_1K_1^3+K_1^4). \nn
\eq
All products under the traces are trace class operators and thus each of the above objects can be computed explicitly in terms of infinite sums.  Still, the expressions become increasingly 
intractable, and  we do not see how further simplifications are possible.

\begin{proof}
Set $a_\lambda=e^{\lambda b}$ and  split the determinant $E[a_\lambda]=\det T(a_\lambda\iv)M_\mu(a_\lambda)$ into two
parts $E[a_\lambda]=E_{1}(\lambda)E_{2}(\lambda)$ where
$$
E_1(\lambda)=\det T(\a\iv) e^{\lambda T(b)},
\quad E_2(\lambda)=\det e^{-\lambda T(b)}M_\mu(\a).
$$
First of all, both expressions are well defined because the expressions under the determinant are of the form identity plus trace class. Indeed, this has been shown  for $T(a_\lambda\iv)e^{\lambda T(b)}$ in \cite[Prop.~7.1]{Eh1}. Now observe that $M_\mu(a_\lambda)$ is a trace class perturbation of $T(a_\lambda)$.

{}It has been shown in \cite[Sec.~3]{Eh2} (see also the proof of Thm.~2.5 in \cite{BasorEhr})
that
$$
E_1(\lambda)=\exp\left(\frac{\lambda^2}{2} \trace (H(b)H(\tb))\right).
$$

It is straightforward to verify that $E_2(\lambda)$ depends analytically on $\lambda$ (see again \cite{Eh1,Eh2}).
Assume now that $|\lambda|$ is sufficiently small such that
$M_{\mu}(a_\lambda)$, being close to the identity operator, is invertible
and hence the determinants $E_2(\lambda)$ are nonzero. Notice that $E_2(0)=1$,
whence there is no problem of defining a logarithm in a small neighborhood of zero,
$$
f(\lambda) :=\log\det e^{-\lambda T(b)} M_{\mu}(\a).
$$

Recall that for invertible analytic operator-valued functions $F(\lambda)$ of the form identity plus trace 
class we have the well-known the formula 
$(\log\det F(\lambda))' = \trace F'(\lambda)F\iv(\lambda)$.
As a consequence, for invertible $A(\lambda)$ and $B(\lambda)$, whose product is identity plus
trace class, we have
\be\label{det_AB}
(\log\det A(\lambda) B(\lambda))' = \trace\Big(A\iv(\lambda) A'(\lambda) +B'(\lambda)B\iv(\lambda)\Big).
\ee
{}From this we obtain 
$$
f'(\lambda) = \trace\Big( M_{\mu}(\a)' M_{\mu}\iv(\a) -T(b)\Big).
$$
For small $|\lambda|$ introduce the well-defined analytic function $B(\lambda)$ defined by $B(0) =0$ and
$$
B'(\lambda)=M_\mu(a_\lambda)'M_\mu\iv(a_\lambda).
$$
Writing out this relation in terms of power series (with $B(\lambda)=\sum_{k=1}^\iy
\lambda^k B_k/k!$)
it follows that 
$$
\left( \sum_{k=0}^\iy \frac{\lambda^k B_{k+1}}{k!}\right)\left(\sum_{k=0}^\iy \frac{\lambda^k M_{\mu}(b^k)}{k!}\right)
=\sum_{n=0}^\iy \frac{\lambda^n M_{\mu}(b^{n+1})}{n!}.
$$
Inspection of the  $n$-th coefficient ($n\ge0$)  produces 
$$
M_{\mu}(b^{n+1})= B_{n+1}+\sum_{k=1}^n{ n \choose k} B_{n+1-k} M_{\mu}(b^k)
$$
which implies the recursion. Noting that $f(0)=0$, $B(0)=0$, and $f'(\lambda)=\trace(B'(\lambda)-T(b))$
yields
$$
E_2(\lambda)=\det e^{-\lambda M_{\mu}(b)}M_{\mu}(a_\lambda)=\exp(\trace(B(\lambda)-\lambda T(b))).
$$
Since we have  $B_1=M_{\mu}(b)$ from the recursion and $\trace K_\mu(b)=0$ ($\rho_{kk}=1$)  the proof is finished.
\end{proof}


\section{Limit theorems: the case $\boldsymbol{1/2<\beta\le1}$ (C2)}
\label{s7}

We will now prove the main results of the introduction related to the moment condition (C2).

Let us first prove Theorem \ref{thm3}(a).
Put $B=F\ell^2(\nu)$ with the conditions on $\nu$ stated there. It follows immediately that 
$B\subseteq F\ell^2_\sigma$ with $\sigma\ge1/\beta\ge 1$. 
Hence by Proposition \ref{p3.2} the Hilbert-Schmidt condition {\bf (HS)} holds.
Moreover, Proposition \ref{p4.1} implies that $B$ is a suitable Banach algebra.
Hence we can use Theorem \ref{mainthm}(b) and obtain (\ref{eq20}) with the constant $H[a]$.
We are left with determining the asymptotics of  the trace  of $P_n T(a\iv) K_\mu(a)P_n$, for which we can  use Theorem \ref{t5.4}(ii). Therein our Banach algebra is continuously embedded into the Banach space $F\ell^2_\sigma\cap F\ell^2(\nu)$ (with possibly different $\nu$).
With $b=a\iv$ the asymptotics equals $\Omega(a,a\iv)\cdot \iota_\mu(2 n)+C_\mu(a,a
\iv)+ o(1)$ with 
$$
\Omega[a]:=\Omega(a,a\iv)=-\frac{1}{4\pi}\int_0^{2\pi}a'(e^{it})(a\iv(e^{it}))'\, dt =\frac{1}{4\pi}\int_0^{2\pi} \left(\frac{a'(e^{it})}{a(e^{it})}\right)^2\, dt.
$$
This gives the correct constant in (\ref{Ga}). As for the constant $F[a]$ in (\ref{con.1})
we remark that 
\be
F[a]=e^{C_\mu(a,a\iv)}
\det\left( T(a\iv) M_\mu(a)e^{-T(a\iv)K_\mu(a)}\right)
\ee
where $C_\mu(a,a\iv)$ is given by (\ref{f.54}), but we make no attempt to simplify the expression.

Notice that both Theorem \ref{t5.4}(ii) and Proposition \ref{p4.1}(iv) require the rather complicated Banach algebra $B=F\ell^2(\nu)$.
This completes the proof of Theorem \ref{thm3}(b).

\medskip
Let us now turn to the proof of Theorem \ref{thm1}. We assume that $B=F\ell^2_{\sigma}$ with $\sigma=1/\beta\vee 3/(2\gamma)$. 
There is no change in the applicability of Theorem \ref{mainthm}(b), however, the function to which we apply it is appropriately re-scaled.
In particular, it depends on $n$, and therefore the statements about uniform convergence are needed.

Let us first point out that the mean of $X_{f,n}$ is precisely $n f_0$ and the variance is asymptotically $\iota_\mu(2n)$ times a scaled $F \ell_1^2$-norm of $f$. (This will actually follow from Theorem \ref{thm1}, but can also be shown by a direct computation resembling the one in Section \ref{s5}.) This motivates to replace $X_{f,n}$ with $f\in B$ by
\be\label{f.75}
X_{f,n}^{\rm scal}:= \frac{X_{f,n}-n f_0}{\sqrt{\iota_\mu(2n)}} 
=X_{g_n,n}, \qquad  g_{n}(e^{ix}):=\frac{f(e^{ix})- f_0}{\sqrt{\iota_\mu(2n)}}.
\ee
Then using (\ref{detform})
$$
\E_{\m,n}[e^{i\lambda X_{f,n}^{\rm scal}}]=\det M_{\mu,n}(a_{\lambda,n})
$$
with $ a_{\lambda,n}=e^{i\lambda g_{n}}$.  Because $\iota_\mu(2n)\to\iy$, the elements $g_n$ ($n\in\N$)  lie in  
a compact subset of $B$, and so 
 $a_{\lambda,n}$ lie in a compact subset of $\Phi(B)$ (see also Proposition \ref{p4.2}(iii)).

By Theorem \ref{mainthm}(b)
$$
\lim_{n\to\iy}
\frac{\det M_{\mu,n}(a_{\lambda,n})}{G[a_{\lambda,n}]^n\cdot
\exp(\trace P_n T(a_{\lambda,n}\iv) K_\mu(a_{\lambda,n})P_n)}=\lim_{n\to\iy} H[a_{\lambda,n}],
$$
due to uniform convergence on compact subsets. The regularized determinant $H[a_{\lambda,n}]$ converges to 
$H[1]=1$ since $T(a_{\lambda,n}\iv)K_\mu(a_{\lambda,n})\to T(1)K_\mu(1)=0$ in the Hilbert-Schmidt norm.
Here we have to use Proposition \ref{p3.3} and the estimate implied by {\bf (HS)}.

Again by Proposition \ref{p4.2}(iii),  $G[a_{\lambda,n}]=1$.  To evaluate  the trace we will used Theorem \ref{t5.4}(i).
Define
$$
h=i\lambda(f-f_0) \quad\mbox{ and }\quad
s_n=\sqrt{\iota_\mu(2n)}
$$
and introduce the functions $p_n,q_n\in B$ via series expansion
\bq
a_{\lambda,n} = e^{h/s_n}=1+h/s_n+p_n/s_n^2, \  \ 
a_{\lambda,n}\iv = e^{-h/s_n}=1-h/s_n+q_n/s_n^2.\nn
\eq
Notice immediately that $p_n\to h^2/2$ and $q_n\to h^2/2$ in the norm of $B$.
Denoting $t_n(b,a)=\trace (P_nT(b)K_\mu(a)P_n)$ we have that
$$
t_n(a_{\lambda,n}\iv,a_{\lambda,n})=
-\frac{t_n(h,h)}{s_n^2}+\frac{-t_n(h,p_n)+t_n(q_n,h)+s_n\iv t_n(p_n,q_n)}{s_n^3}
$$
because in general $t_n(b,1)=t_n(1,a)=0$. 
Theorem \ref{t5.4}(i) says that for $a,b\in B$ we have $t_n(b,a)=\Omega(a,b) s_n^2+o(s_n^2)$
and that the convergence is uniform on compact sets.
Hence, applying this to all of the above expressions involving $t_n$ and using that 
 $p_n$ and $q_n$ are from compact subsets of $B$, it follows that 
$$
 \lim_{n \rightarrow \infty} 
 t_n(a_{\lambda,n}\iv,a_{\lambda,n}) 
 =-\Omega(h,h)
=-\frac{\lambda^2}{2}\sum_{k=-\iy}^\iy k^2 f_k f_{-k}=-
\frac{\lambda^2}{4\pi}\int_0^{2\pi} (f'(e^{ix}))^2\, dx.
$$
This implies 
\be\label{f.61}
\lim_{n\to\iy} \E_{\m,n}[e^{i\lambda X_f^{\rm scal}}]
=\exp\left(-\frac{\lambda^2}{2}\sum_{k\in\Z} k^2 f_k f_{-k}\right)=\exp\left(-\frac{\lambda^2}{4\pi}
\int_{0}^{2\pi} (f'(e^{ix}))^2\, dx\right)
\ee
completing the proof of Theorem \ref{thm1}.
Moreover, it is easy to see that the convergence (\ref{f.61}) is uniform on bounded sets of $\lambda$ and compact sets of $f\in B$.


\section*{Appendix: On the Toeplitz $\circ$ Hankel formula}

We wish to compute the integral
$$
   \mathcal{I}_{\m,n}(\varphi) =   \frac{1}{Z_{\m,n}} \int_{\C^n}  \prod_{k=1}^n \varphi( \arg{z_k})  \prod_{k < \ell}  |z_k - z_{\ell} |^2    \prod_{k=1}^n  d{\m}(z_k),
$$
where  $d\m$ is radial ($d\m(z) = d\theta  d \mu(r)$, $z=r e^{i\theta}$)  and $Z_{\m,n}$ is  chosen so that $\mathcal{I}_{\m,n}(1) = 1$.

To begin, write
$$
 \prod |z_k - z_{\ell} |^2  = \det \Bigl[  [ z_k^{\ell-1} ]  \cdot [ \bar{z}_k^{\ell-1} ]^T  \Bigr]
$$
where  $[ z_k^{\ell-1} ] $ denotes the $n \times n$ matrix with $z_k^{\ell -1}$ in row $k$ and column $\ell$. That is to say,
$$
   \prod |z_k - z_\ell |^2  = \det \left[ \begin{array}{cccc} n  & \sum_{k=1}^n \bar{z_k}  & \sum_{k=1}^n \bar{z}_k^2  & \dots \\
                                                       \sum_{k=1}^n z_k & \sum_{k=1}^n z_k  \bar{z}_k & \sum_{k=1}^n z_k \bar{z}_k^{2} & \dots \\
                                                       \vdots & \vdots & \vdots & \ddots \end{array} \right].
$$
Now  expand the first column on the right hand side via the linearity of the determinant, writing it as sum of $n$ determinants with first
column $[1, z_k, z_k^2, \dots, z_k^{n-1} ]$.  By the product structure of $\prod \varphi(\arg z_k)  d \m( z_k) $ each of the resulting $n$ integrals 
are the same. Thus, we can replace the $\prod |z_k - z_\ell  |^2$ in the measure with
$$
  \det \left[ \begin{array}{cccc} 1  & \sum_{k=1}^n \bar{z_k}  & \sum_{k=1}^n \bar{z}_k^2  & \dots \\
                                                         z_1 & \sum_{k=1}^n z_k \bar{z}_k & \sum_{k=1}^n z_k \bar{z}_k^{2} & \dots \\
                                                       \vdots & \vdots & \vdots & \ddots \end{array} \right]                                                      
    =    \det \left[ \begin{array}{cccc} 1  & \sum_{k=2}^n \bar{z_k}  & \sum_{k=2}^n \bar{z}_k^2  & \dots \\
                                                         z_1 & \sum_{k=2}^n z_k \bar{z}_k & \sum_{k=2}^n z_k \bar{z}_k^{2} & \dots \\
                                                       \vdots & \vdots & \vdots & \ddots \end{array} \right],                                                
$$
at the cost of introducing a constant factor which may be absorbed into $Z_{\m,n}$.
This procedure may be repeated, and after the $n$-th iteration we conclude that
\begin{eqnarray*}
 {\mathcal{I}}_{\m,n}(\varphi) &  =  &  \frac{1}{Z_{\m,n}} \int_{\C^n}
       \prod_{k=1}^n \varphi(\arg z_k)   \det \Bigl[  z_k^{\ell-1} \bar{z}_k^{k-1}  \Bigr]_{1 \le k, \ell \le  n}   \prod_{k=1}^n d \m(z_k)  \\
                                          & = &    \frac{1}{\tilde{Z}_{\m,n}} \det \Bigl[ \frac{1}{2\pi} \int_{\C} \varphi(\arg z )  z^{\ell} \bar{z}^{k}  d\m(z)  \Bigr]_{0 \le k,\ell \le n-1},              
\end{eqnarray*}
after using the linearity of the determinant once more.  And, as
$$
\frac{1}{2\pi}  \int_{\C} \varphi(\arg z )  z^{\ell} \bar{z}^{k}  d\m(z)   = \varphi_{k-\ell}  \int_0^{\infty} r^{k+ \ell }  d \mu(r)=\varphi_{k-\ell} m_{k+\ell} ,
$$
setting $\varphi \equiv 1$ we find that $\tilde{Z}_{\m,n} =\prod_{k=0}^{n-1} m_{2k}$, and so formula (\ref{detform}).

\bigskip

\noindent{\bf{Acknowledgments}}  The work of the first named author was supported in part by NSF grant DMS-0901434;
that of the second  by NSF grant DMS-0645756.

\end{document}